\tikzset{
  every picture/.style={>=stealth'},
  semithick,
  nolabel/.style={label/.code={}},
  graphs/declare={gap}{__{}[draw=none,fill=none] },
  graphs/declare={gp2}{__{}[draw=none,fill=none]  -!-  __{}
    [draw=none,fill=none] },
  graphs/declare={gp3}{__{}[draw=none,fill=none]  -!-  __{}
    [draw=none,fill=none] -!-  __{}[draw=none,fill=none] },
  smallgraphs/.style={%
    graphs/every graph/.style={
      branch right=4mm,
      grow down=4mm,
      empty nodes,
      edges={line width=0.2pt},
      nodes={circle,inner sep=1.2pt}
    },
    lab/.style={draw=none,fill=none,rectangle}
  },
  every graph/.style={branch right=5mm,grow down=5mm}
}
\newcommand{\mathnamenodebasic}[2][]{%
  \node[circle,fill,inner sep=2pt,#1,
  ] (#2) {};%
  \@ifundefined{mnn@savelabel}{%
    \xdef\mnn@savelabel{#2/#1}%
  }{%
    \xdef\mnn@savelabel{\mnn@savelabel,#2/#1}%
  }%
}
\newcommand{\matrixgraphfinal}[1][]{%
  \foreach \lab/\opts in \mnn@savelabel {
    \scoped[label distance=1.2mm,#1,\opts]%
    \node[\opts,rectangle,fill=none,draw=none,%
      label={[rectangle]{$%
      \pgfkeysvalueof{/mnn/label prefix}%
      \lab
      \pgfkeysvalueof{/mnn/label suffix}%
      $}},at={(\lab)}] {};%
  };%
  \global\let\mnn@savelabel\relax
}
\newcommand{\mnn@exec}[1][]{\mathnamenodebasic[#1]{\mnn@name}}
\def\mnn@gathername#1{%
  \edef\mnn@name{\mnn@name#1}\mnn@work%
}
\def\mnn@work{%
  \@ifnextchar\pgfmatrixnextcell{%
    \mnn@exec
  }{%
  \@ifnextchar\pgfmatrixendrow{%
      \mnn@exec
  }{%
      \@ifnextchar[{%
      \mnn@exec
      }{%
      \mnn@gathername%
      }%
    }%
  }%
}
\newcommand{\mathnamenode}{\def\mnn@name{}\mnn@work}
\newcommand{\matrixgraph}{
  \begingroup
  \catcode`\&=13%
  \matrixgraph@work
}
\newcommand{\matrixgraph@work}[3][]{
  \matrix[fill=none,draw=none,rectangle,inner sep=0pt,%
    matrix anchor=north west,#1,%
    execute at begin cell=\mathnamenode] {#2};%
  \endgroup%
  \graph[use existing nodes,%
    default edge operator=complete bipartite]{#3};%
  \matrixgraphfinal[#1]%
}
\newcommand{\colclass}[2][0]{
\makeatletter
\xdef\clcl@verts{}
\foreach \v in {#2}{\xdef\clcl@verts{\clcl@verts(\v)}}
\begin{scope}[on background layer]
\node[fill=black!20,rotate fit=#1,fit/.expand once={\clcl@verts},inner sep=4pt,rectangle,rounded corners=8pt,draw=none] {};
\end{scope}
\makeatother
}
\newcommand{\ra}{15mm}
\theoremstyle{plain}
\newtheorem{theorem}{Theorem}[section]
\newtheorem{lemma}[theorem]{Lemma}
\newtheorem{proposition}[theorem]{Proposition}
\newtheorem{corollary}[theorem]{Corollary}
\theoremstyle{definition} 
\newtheorem{remark}[theorem]{Remark}
\newtheorem{definition}[theorem]{Definition}
\newtheorem{assumption}[theorem]{Assumption}
\newcounter{claim}
\renewcommand{\theclaim}{\Alph{claim}}
\newenvironment{claim}{\refstepcounter{claim}%
\par\medskip\par\noindent{\it Claim~\theclaim.~}~\rm}%
{\par\smallskip\par}
\newenvironment{subproof}{\par\noindent{\sl Proof of Claim~\theclaim.~}}%
{$\,\triangleleft$\par\medskip\par}
\def\@gifnextchar#1#2#3{\let\@tempe#1\def\@tempa{#2}\def\@tempb{#3}%
  \futurelet\@tempc\@gifnch}
\def\@gifnch{\ifx\@tempc\@sptoken\let\@tempd\@tempb%
  \else\ifx\@tempc\@tempe\let\@tempd\@tempa\else\let\@tempd\@tempb\fi\fi\@tempd}
\def\SK@set#1{\left\{#1\right\}}
\def\SK@@set#1#2{\{#1\,:\,
    \begin{array}{@{}l@{}}#2\end{array}
\}}
\def\SK@mset#1{\left\{\!\!\left\{#1\right\}\!\!\right\}}
\def\SK@@mset#1#2{\{\!\!\{#1\,:\,
    \begin{array}{@{}l@{}}#2\end{array}
\}\!\!\}}
\def\BIG@set#1{\Big\{#1\Big\}}
\def\BIG@@set#1#2{\Big\{#1\:\Big|\:
    \begin{array}{@{}l@{}}#2\end{array}
\Big\}}
\newcommand{\Set}[1]{\@gifnextchar\bgroup{\SK@@set{#1}}{\SK@set{#1}}}
\newcommand{\Mset}[1]{\@gifnextchar\bgroup{\SK@@mset{#1}}{\SK@mset{#1}}}
\newcommand{\Bigset}[1]{\@gifnextchar\bgroup{\BIG@@set{#1}}{\BIG@set{#1}}}
\newcommand{\refeq}[1]{(\ref{eq:#1})}
\newcommand{\function}[2]{:#1 \rightarrow #2}
\newcommand{\compl}[1]{\overline{#1}}
\DeclareMathOperator{\aut}{Aut}
\DeclareMathOperator{\inv}{Inv}
\DeclareMathOperator{\cay}{Cay}
\DeclareMathOperator{\diam}{diam}
\newcommand{\WL}[1]{\ensuremath{#1\text{-}\mathrm{WL}}\xspace}
\newcommand{\wl}{\WL 2}
\newcommand{\kwl}{\WL k}
\newcommand{\alg}[3]{\mathrm{WL}_{#1}^{#2}(#3)}
\newcommand{\algt}[1]{\mathrm{WL}_2(#1)}
\newcommand{\kalgt}[1]{\mathrm{WL}_k(#1)}
\newcommand{\wlh}[2]{\mathrm{WL}_{#1,#2}}
\newcommand{\wlhi}[2]{\mathrm{WL}(#1,#2)}
\newcommand{\uhi}[1]{\mathrm{U}(#1)}
\newcommand{\tri}[1]{\mathrm{R}(#1)}
\newcommand{\vtclass}{\mathrm{VT}}
\newcommand{\baru}{{\bar u}}
\newcommand{\barx}{{\bar x}}
\newcommand{\bary}{{\bar y}}
\newcommand{\bF}{\mathbb{F}}
\newcommand{\bZ}{\mathbb{Z}}
\newcommand{\cP}{\mathcal{P}}
\newcommand{\cS}{\mathcal{S}}
\newcommand{\cT}{\mathcal{T}}
\newcommand{\cX}{\mathcal{X}}
\newcommand{\fp}{\bF_p}
\newcommand{\fpa}{\fp^+}
\newcommand{\fpm}{\fp^\times}
\newcommand{\outn}[2]{N_{#2}(#1)}
\newcommand{\clo}[1]{\mathit{Cl}(#1)}
\newcommand{\symdiff}{\bigtriangleup}
\title{The Weisfeiler-Leman Algorithm\\ and Recognition of Graph Properties}
\author{%
Frank Fuhlbrück\thanks{Institut für Informatik, Humboldt-Universität zu Berlin, Germany.}, 
Johannes Köbler${}^*$, 
Ilia Ponomarenko\thanks{School of Mathematics and Statistics of
  Central China Normal University, Wuhan, China and
  Steklov Institute of Mathematics at St. Petersburg, Russia.}, 
Oleg Verbitsky${}^*$\,\thanks{Supported by DFG grant
KO 1053/8--1. On leave from the IAPMM, Lviv, Ukraine.}}
\date{}
\begin{document} 

\maketitle

\begin{abstract}
  The $k$-dimensional Weisfeiler-Leman algorithm (\kwl) is a very
  useful combinatorial tool in graph isomorphism testing. We address the applicability
  of \kwl to recognition of graph properties. Let $G$ be an input graph
  with $n$ vertices. We show that, if $n$ is prime, then vertex-transitivity
  of $G$ can be seen in a straightforward way from the output of \wl
  on $G$ and on the vertex-individualized copies of $G$. However, if
  $n$ is divisible by 16, then \kwl is unable to distinguish between
  vertex-transitive and non-vertex-transitive graphs with $n$ vertices
  as long as $k=o(\sqrt n)$. Similar results are obtained for recognition
  of arc-transitivity.
\end{abstract}

\section{Introduction}\label{s:intro}

The $k$-dimensional Weisfeiler-Leman algorithm (\kwl),
whose original, 2-dimensional version \cite{WLe68} appeared in 1968,
has played a prominent role in isomorphism testing already
for a half century. Given a graph $G$ with vertex set $V$,
\kwl computes a canonical coloring $\kalgt G$ of the Cartesian power $V^k$.
Let $\Mset{\kalgt G}$ denote the multiset of colors appearing in $\kalgt G$.
The algorithm decides that two graphs $G$ and $H$ are isomorphic
if $\Mset{\kalgt G}=\Mset{\kalgt H}$, and that they are non-isomorphic
otherwise. While a negative decision is always correct,
Cai, Fürer, and Immerman \cite{CaiFI92}
constructed examples of non-isomorphic graphs $G$ and $H$ with $n$ vertices
such that $\Mset{\kalgt G}=\Mset{\kalgt H}$ as long as $k=o(n)$.
Nevertheless, a constant dimension $k$ suffices to correctly decide
isomorphism for many special classes of graphs (when $G$ is in the
class under consideration and $H$ is arbitrary). For example,
$k=2$ is enough if $G$ is an interval graph \cite{EvdokimovPT00},
$k=3$ is enough for planar graphs \cite{KieferPS19}, and there is a constant $k=k(M)$
sufficient for all graphs not containing a given graph $M$ as a minor \cite{Grohe12}.
Last but not least, \kwl is an important component in Babai's
quasipolynomial-time algorithm \cite{Babai16} for general graph isomorphism.

In the present paper, we initiate a discussion of the applicability
of \kwl to recognition of graph properties rather than to testing isomorphism.
That is, given a single graph $G$ as input, we are interested in knowing
which properties of $G$ can be detected by looking at $\kalgt G$.
Some regularity properties are recognized in a trivial way.
For example, $G$ is strongly regular if and only if \wl
splits $V^2$ just in the diagonal $\Set{(u,u)}{u\in V}$,
the adjacency relation of $G$, and the complement. It is worth
of mentioning that isomorphism of strongly regular graphs is
considered to be a hard problem for \kwl.
It is only known \cite{Babai80} that \kwl correctly decides
isomorphism of strongly regular graphs with $n$ vertices
if $k\ge 2\sqrt n\log n$ (see, e.g., also~\cite{BabaiCSTW13}).

For a property $\cP$, we use the same character $\cP$ to denote
also the class of all graphs possessing this property.
Suppose that the isomorphism problem for graphs in $\cP$ is known to be
solvable by \kwl for some $k$. This means that, for every $G\in\cP$,
$\Mset{\kalgt G}$ is a complete isomorphism invariant of $G$
and hence, at least implicitly, $\kalgt G$ contains the information
about all properties of $G$. It is, however, a subtle question whether
any certificate of the membership of $G$ in $\cP$ can be extracted
from $\kalgt G$ efficiently. We can only be sure that \kwl
distinguishes $\cP$ from its complement, in the following sense:
If $G\in\cP$ and $H\notin\cP$, then $\Mset{\kalgt G}\ne\Mset{\kalgt H}$.
However, given the last inequality, we will never know whether
$G\in\cP$ and $H\notin\cP$ or whether $H\in\cP$ and $G\notin\cP$.
As a particular example, the knowledge that \wl decides isomorphism
of interval graphs or that \WL3 decides isomorphism of planar graphs
does not seem to imply, on its own, any recognition algorithm
for these classes.

We address the applicability of \kwl to recognition of properties
saying that a graph is highly symmetric.

\paragraph{Vertex-transitivity.}
A graph $G$ is \emph{vertex-transitive} if every vertex can be taken
to any other vertex by an automorphism of $G$. It is unknown whether
the class of vertex-transitive graphs is recognizable in polynomial
time. The isomorphism problem for vertex-transitive graphs reduces to
their recognition problem, and its complexity status is also open.
In the case of graphs with a prime number $p$ of vertices, a polynomial-time
recognition algorithm is known due to Muzychuk and Tinhofer \cite{MuzychukT98}.
Their algorithm uses \wl as preprocessing and then involves
a series of algebraic-combinatorial operations
to find a Cayley presentation of the input graph.
Our first result, Theorem \ref{thm:p}, shows a very simple,
purely combinatorial way to recognize vertex-transitivity of
a graph $G$ with $p$ vertices. Indeed, vertex-transitivity can immediately be
detected by looking at the outdegrees of the monochromatic digraphs
in $\algt G$ and $\algt{G_u}$ for all copies of $G$ with an
individualized vertex $u$. Our algorithm takes time $O(p^4\log p)$,
which is somewhat better than the running time $O(p^5\log^2 p)$
of the algorithm presented in~\cite{MuzychukT98}.

If $p$ is prime, then the vertex-transitive graphs with $p$ vertices
are known to be \emph{circulant graphs} \cite{Turner67}, i.e., Cayley graphs of
the cyclic group of order $p$. 
The research on circulant graphs has a long history; see, e.g., \cite{Babai77,MuzychukKP99}.
This class of graphs can be recognized in polynomial time \cite{EvdokimovP04},
but whether or not this can be done by means of \kwl is widely open.
The dimension $k=2$ would clearly suffice if the algorithm could identify
a cyclic order of the vertices in an input graph corresponding to its automorphism.
However, this is not the case because such an order is, in general, not unique
and is not preserved by automorphisms, even when the number of vertices is prime.

The analysis of our algorithm is based on the theory of coherent configurations
(see a digest of main concepts in Section \ref{ss:ccs}).
In fact, our exposition, apart from the well-known facts
on circulants of prime order, uses only several results
about the \emph{schurity property} of certain coherent configurations.

Furthermore, we explore the limitations of the \kwl-based
combinatorial approach to vertex-transitivity by proving that, if
$n$ is divisible by 16, then \kwl is unable to distinguish between
vertex-transitive and non-vertex-transitive graphs with $n$ vertices
as long as $k=o(\sqrt n)$ (see Theorem \ref{thm:vt}).
This excludes extension of our positive result to the general case.
Indeed, such an extension would readily imply
that \WL3 distinguishes any vertex-transitive graph from any non-vertex-transitive
graph, contradicting our lower bound $k=\Omega(\sqrt n)$.
This bound as well excludes any other combinatorial approach
to recognizing vertex-transitivity as long as it is based solely on~\kwl.

Our lower bound is based on the Cai-Fürer-Immerman construction \cite{CaiFI92}.
We need graphs $G$ and $H$ such that $G$ is vertex-transitive, $H$ is not,
and distinguishing $G$ and $H$ is hard for \kwl.
The CFI construction uses a special gadget to convert
an expander graph $F$ into a pair of non-isomorphic graphs $A$ and $B$
hard for \kwl, where $k$ depends on the
expansion quality of $F$. Assuming that both $A$ and $B$ are
vertex-transitive, the desired pair of $G$ and $H$ can be obtained
by taking the vertex-disjoint union of two copies
of $A$ as $G$ and the vertex-disjoint union of $A$ and $B$ as $H$.
One can expect that the vertex-transitivity of $A$ and $B$ can be
achieved whenever the template graph $F$ is vertex-transitive.
This natural idea was suggested
in the context of vertex-transitive coherent configurations 
by Evdokimov in his thesis \cite{Evdokimov04}.
An attempt to follow this scenario in the realm of uncolored graphs 
is faced with three technical complications.

First, the original CFI gadget \cite[Fig.~3]{CaiFI92} (or \cite[Fig.~13.24]{dcbook})
involves vertices of different degrees and, hence,
destroys vertex-transitivity even when the template graph $F$
is vertex-transitive. This can be overcome
by using a modified version of the CFI gadget (see Figure \ref{fig:CFI})
with all vertex degrees equal,
which apparently first appeared in \cite{EvdokimovP99}; see
also \cite{Fuerer17,NeuenS17,stacs20}. 

The second point is more subtle.
The CFI construction replaces each vertex of the template graph $F$
with a cell of new vertices, and vertices in different cells receive
different colors. In many contexts the vertex coloring can be removed
by using additional gadgets, but this is hardly possible without
losing the vertex-transitivity. The vertex colors constrain the automorphisms
of the CFI graphs $A$ and $B$ and ensure that these graphs are non-isomorphic.
We find rather general conditions on $F$ under which the CFI graphs
retain their functionality even without colors.
These conditions are satisfied by all $k$-regular graphs with $k>4$;
for $k\le 4$ we need some extra assumptions on the structure of $F$
(see Lemmas \ref{lem:twist0} and~\ref{lem:twist1}).

Finally, we need vertex-transitive template graphs $F$ with good
expansion properties. Algebraic expanders are a good choice as they are
constructed as Cayley graphs \cite{HooryLW06}. However, they allow us to show a negative
result only for rather sparse graph families. To make negative instances denser,
we use the fact shown by Babai \cite{Babai91}
that every vertex-transitive graph of sufficiently small diameter has
at least some non-trivial expansion property, which is sufficient for our purposes.

In Section \ref{s:hierarchy} we discuss a hierarchy of natural regularity properties
of graphs and show, as a consequence of failure of \kwl to recognize
vertex-transitivity, that this hierarchy does not collapse at any level.

\paragraph{Arc-transitivity.}
A graph $G$ is \emph{arc-transitive} if every ordered pair of
adjacent vertices can be taken to any other ordered pair of adjacent
vertices by an automorphism of $G$.
With just a little additional effort,
our analysis of vertex-transitivity implies that arc-transitivity of 
a given graph $G$ with a prime number of vertices can be immediately seen
from $\algt G$ and from $\algt{G_u}$ for all vertex-individualized copies of $G$
(Theorem \ref{thm:p-at}). On the negative side, 
the CFI construction can be applied with some additional care also in
the arc-transitive setting. Specifically, we prove that, if
$n$ is a square number divisible by 16, then \kwl is unable to distinguish between
arc-transitive and non-arc-transitive graphs with $n$ vertices
as long as $k=o(\sqrt n)$. The same holds true for all $n=16p$ with a prime factor $p\equiv1\pmod3$.
In this case, we need 3-regular template graphs $F$ that are
arc-transitive and have non-trivial expansion.
Requiring both arc-transitivity and 3-regularity is alone a rather strong condition.
Such graphs are extremely rare; the complete list of those with at most 512 vertices
is known as \emph{Foster census} \cite{Foster}. We employ the construction of
such graphs by Cheng and Oxley \cite{ChengO87}, and the most essential task
is here to estimate their expansion properties (cf.~Lemma~\ref{lem:diam-2}).

\section{Notation and definitions}\label{s:wl}

We denote the vertex set of a graph $G$ by $V(G)$ and the edge set by $E(G)$.
An edge $\{u,v\}\in E(G)$ can sometimes be referred to as $uv$ or $vu$.
The number of vertices in $G$ will sometimes be denoted by~$v(G)$.
The neighborhood $N(x)$ of a vertex $x$ consists of all vertices adjacent to~$x$.
We use the standard notation $K_{s,t}$ for the complete bipartite graph
where every of $s$ vertices in one part is adjacent to all $t$ vertices
in the other part. Furthermore, $2K_{s,t}$ stands for the vertex-disjoint union
of two copies of~$K_{s,t}$. The isomorphism relation is denoted by $\cong$,
and $\aut(G)$ is the automorphism group of~$G$.

\paragraph{Cayley graphs.}
Let $\Gamma$ be a group and $Z$ be a set of non-identity elements of $\Gamma$
such that $Z^{-1}=Z$, that is, any element belongs to $Z$ only together with its inverse.
The \emph{Cayley graph} $\cay(\Gamma,Z)$ has the elements of $\Gamma$ as vertices,
where $x$ and $y$ are adjacent if $x^{-1}y\in Z$. This graph is connected
if and only if the \emph{connection set} $Z$ is a generating set of $\Gamma$.
Every Cayley graph is obviously vertex-transitive.

\paragraph{The Weisfeiler-Leman algorithm.}
The original version of the Weisfeiler-Leman algorithm, \wl,
operates on the Cartesian square $V^2$ of the vertex set of an input graph $G$.
Below it is supposed that $G$ is undirected. 
We also suppose that $G$ is endowed with a vertex coloring $c$,
that is, each vertex $u\in V$
is assigned a color denoted by $c(u)$. The case of uncolored graphs is covered
by assuming that $c(u)$ is the same for all $u$.
\wl starts by assigning each pair $(u,v)\in V^2$ the initial color
$\alg20{u,v}=(\mathit{type}, c(u), c(v))$, where \textit{type} takes on one of three
values, namely \textit{edge} if $u$ and $v$ are adjacent, \textit{nonedge}
if distinct $u$ and $v$ are non-adjacent, and \emph{loop} if $u=v$.
The coloring of $V^2$ is then modified step by step. The $(r+1)$-th coloring is computed as
\begin{equation}
  \label{eq:refine-2}
  \alg2{r+1}{u,v}=\Mset{(\alg2r{u,w},\,\alg2r{w,v})}_{w\in V},
\end{equation}
where $\Mset{}$ denotes the multiset.
Let $\cS^r$ denote the partition of $V^2$ determined by the coloring $\alg2r{\cdot}$.
It is easy to notice that $\alg2{r+1}{u,v}=\alg2{r+1}{u',v'}$ implies
$\alg2r{u,v}=\alg2r{u',v'}$, which means that $\cS^{r+1}$ is finer than or equal to
$\cS^r$. It follows that the partition stabilizes starting from some step
$t\le n^2$, where $n=|V|$, that is, $\cS^{t+1}=\cS^t$, which implies
that $\cS^r=\cS^t$ for all $r\ge t$. As the stabilization is reached,
\wl terminates and outputs the coloring $\alg2t{\cdot}$, which will be denoted by~$\algt{\cdot}$.

Note that the length of $\alg2r{u,v}$ grows exponentially as $r$ increases.
The exponential blow-up is remedied by renaming the colors after each step. 

 Let $\phi$ be an automorphism of $G$.
 A simple induction on $r$ shows that
 $$\alg2r{\phi(u),\phi(v)}=\alg2r{u,v}$$
for all $r$ and, hence
\begin{equation}\label{eq:wl-inv}
  \algt{\phi(u),\phi(v)}=\algt{u,v}.
\end{equation}  
In particular, if $G$ is vertex-transitive, then the color $\algt{u,u}$ is the same
for all $u\in V$. If the last condition is fulfilled, we say that \wl
\emph{does not split the diagonal on $G$}, where by the diagonal we mean
the set of all loops~$(u,u)$.

In general, the automorphism group $\aut(G)$ of the graph $G$
acts on the Cartesian square $V(G)^2$, and the orbits of this
action are called \emph{2-orbits} of $\aut(G)$. By Equality \refeq{wl-inv},
the partition of $V(G)^2$ into 2-orbits is finer than or equal to
the stable partition $\cS=\cS^t$ produced by~\wl.

\section{Vertex-transitivity on a prime number of vertices}

We begin with a few simple observations about the output produced by \wl
on an input graph $G$.
Recall that in this paper we restrict our attention to undirected graphs.
Even though $G$ is undirected, the equality $\algt{u,v}=\algt{v,u}$
need not be true in general. Thus, the output of \wl on $G$
can naturally be seen as a complete colored directed graph on the vertex set $V(G)$,
which we denote by $\algt G$. Thus, $\algt G$ contains every pair $(u,v)\in V(G)^2$
as an arc, i.e., a directed edge, and this arc has color $\algt{u,v}$.
We will see $\algt G$ as containing no loops, but instead we assign 
each vertex $u$ the color $\algt{u,u}$.
Any directed subgraph of $\algt G$ formed by all arcs of the same color
is called a \emph{constituent digraph}.

Let $(u,v)$ and $(u',v')$ be arcs of a constituent digraph $C$ of $\algt G$.
Note that the vertices $u$ and $u'$ must be equally colored in $\algt G$.
Indeed, since the color partition of $\algt G$ is stable,
there must exist $w$ such that $(\algt{u',w},\algt{w,v'})=(\algt{u,u},\algt{u,v})$.
The equality $\algt{u',w}=\algt{u,u}$ can be fulfilled only by $w=u'$
because any non-loop $(u',w)$ is initially colored differently from the loop
$(u,u)$ and, hence, they are colored differently after all refinements.

Note also that, if $u$ and $v$ are equally colored in $\algt G$,
then they have the same outdegree in every constituent digraph $C$;
in particular, they simultaneously belong or do not belong to $V(C)$.
Otherwise, contrary to the assumption that the color partition of $\algt G$
is stable, the loops $(u,u)$ and $(v,v)$ would receive different colors in
another refinement round of \wl. It follows that 
for each constituent digraph $C$ there is an integer $d\ge1$ such that
all vertices in $C$ either have outdegree $d$ or $0$. We call $d$ the
\emph{outdegree of~$C$}.

Let $u\in V(G)$. A \emph{vertex-individualized graph} $G_u$ is obtained
from $G$ by assigning the vertex $u$ a special color, which does not occur in $G$.
If $G$ is vertex-transitive, then all vertex-individualized copies of $G$
are obviously isomorphic.

\begin{figure}
\centering
 \begin{tikzpicture}[every node/.style={circle,draw=black,inner sep=2.5pt,fill=white}]
    \FPdiv{\w}{360}{7}
    \FPdiv{\ww}{360}{14}

   \begin{scope}
  \foreach \i in {0,...,6} 
    {
    \node (x\i) at (\i*\w:\ra) {};
    }
      \foreach \i/\j in {0/2,2/4,4/6,6/1,1/3,3/5,5/0} 
        {
         \draw (x\i) -- (x\j); 
        }
      \foreach \i/\j in {0/3,3/6,6/2,2/5,5/1,1/4,4/0} 
        {
         \draw (x\i) -- (x\j); 
        }
\node[draw=none,below] () at (0mm,-15mm) {$G=\compl{C_7}$};
   \end{scope}

   \begin{scope}[xshift=40mm]
  \foreach \i in {0,...,6} 
    {
    \node[fill=black] (x\i) at (\i*\w:\ra) {};
    }
      \foreach \i/\j in {0/1,1/2,2/3,3/4,4/5,5/6,6/0} 
        {
         \draw[blue,shorten >= 2pt,shorten <= 2pt,<->] (x\i) -- (x\j); 
        }
      \foreach \i/\j in {0/2,2/4,4/6,6/1,1/3,3/5,5/0} 
        {
         \draw[red,shorten >= 2pt,shorten <= 2pt,<->] (x\i) -- (x\j); 
        }
      \foreach \i/\j in {0/3,3/6,6/2,2/5,5/1,1/4,4/0} 
        {
         \draw[green,shorten >= 2pt,shorten <= 2pt,<->] (x\i) -- (x\j); 
        }
\node[draw=none,below] () at (0mm,-15mm) {$\algt G$};
   \end{scope}

   \begin{scope}[xshift=80mm]
    \node[fill=black] (x0) at (0:\ra) {};
  \foreach \i in {1,...,6} 
    {
    \node (x\i) at (\i*\w:\ra) {};
    }
      \foreach \i/\j in {0/2,2/4,4/6,6/1,1/3,3/5,5/0} 
        {
         \draw (x\i) -- (x\j); 
        }
      \foreach \i/\j in {0/3,3/6,6/2,2/5,5/1,1/4,4/0} 
        {
         \draw (x\i) -- (x\j); 
        }
\node[draw=none,below] () at (0mm,-20mm) {$G_0$};
   \end{scope}

   \begin{scope}[xshift=120mm]
    \node[fill=black] (x0) at (0:\ra) {};
  \foreach \i in {1,6} 
    {
    \node[blue] (x\i) at (\i*\w:\ra) {};
    }
  \foreach \i in {2,5} 
    {
    \node[red] (x\i) at (\i*\w:\ra) {};
    }
  \foreach \i in {3,4} 
    {
    \node[green] (x\i) at (\i*\w:\ra) {};
    }
\FPset{\rab}{1.6}
 \draw[blue,dashed,shorten <= 3pt,shorten >= 3pt,<-] (x0).. controls (\ww:\rab) .. (x1);
 \draw[blue,shorten >= 2pt,shorten <= 2pt,->] (x0) -- (x1); 
 \draw[blue,dashed,shorten <= 3pt,shorten >= 3pt,<-] (x0).. controls (-\ww:\rab) .. (x6);
 \draw[blue,shorten >= 2pt,shorten <= 2pt,->] (x0) -- (x6);
\FPset{\rar}{1.15}
 \draw[red,dashed,shorten <= 4pt,shorten >= 4pt,<-] (x0).. controls (\w:\rar) .. (x2);
 \draw[red,shorten >= 3pt,shorten <= 3pt,->] (x0) -- (x2); 
 \draw[red,dashed,shorten <= 4pt,shorten >= 4pt,<-] (x0).. controls (-\w:\rar) .. (x5);
 \draw[red,shorten >= 3pt,shorten <= 3pt,->] (x0) -- (x5);
\FPset{\rag}{0.6}
 \draw[green,dashed,shorten <= 4pt,shorten >= 4pt,<-] (x0).. controls (3*\ww:\rag) .. (x3);
 \draw[green,shorten >= 3pt,shorten <= 3pt,->] (x0) -- (x3); 
 \draw[green,dashed,shorten <= 4pt,shorten >= 4pt,<-] (x0).. controls (-3*\ww:\rag) .. (x4);
 \draw[green,shorten >= 3pt,shorten <= 3pt,->] (x0) -- (x4);
 \draw[blue,dotted,shorten >= 2pt,shorten <= 2pt,<->] (x1) -- (x6); 
 \draw[red,dotted,shorten >= 2pt,shorten <= 2pt,<->] (x2) -- (x5); 
 \draw[green,dotted,shorten >= 2pt,shorten <= 2pt,<->] (x3) -- (x4); 
\FPset{\rarg}{1.6}
 \draw[yellow,dashed,shorten <= 3pt,shorten >= 3pt,<-] (x2).. controls (5*\ww:\rarg) .. (x3);
 \draw[yellow,shorten >= 2pt,shorten <= 2pt,->] (x2) -- (x3); 
 \draw[yellow,dashed,shorten <= 3pt,shorten >= 3pt,<-] (x5).. controls (-5*\ww:\rarg) .. (x4);
 \draw[yellow,shorten >= 2pt,shorten <= 2pt,->] (x5) -- (x4);
\FPset{\ragr}{1.1}
 \draw[magenta,dashed,shorten <= 3pt,shorten >= 3pt,<-] (x2).. controls (3*\w:\ragr) .. (x4);
 \draw[magenta,shorten >= 2pt,shorten <= 2pt,->] (x2) -- (x4); 
 \draw[magenta,dashed,shorten <= 3pt,shorten >= 3pt,<-] (x5).. controls (-3*\w:\ragr) .. (x3);
 \draw[magenta,shorten >= 2pt,shorten <= 2pt,->] (x5) -- (x3);
\FPset{\rabr}{1.6}
 \draw[brown,dashed,shorten <= 3pt,shorten >= 3pt,<-] (x1).. controls (3*\ww:\rabr) .. (x2);
 \draw[brown,shorten >= 2pt,shorten <= 2pt,->] (x1) -- (x2); 
 \draw[brown,dashed,shorten <= 3pt,shorten >= 3pt,<-] (x6).. controls (-3*\ww:\rabr) .. (x5);
 \draw[brown,shorten >= 2pt,shorten <= 2pt,->] (x6) -- (x5);
\FPset{\rarb}{0.6}
 \draw[violet,dashed,shorten <= 3pt,shorten >= 3pt,<-] (x1).. controls (-\ww:\rarb) .. (x5);
 \draw[violet,shorten >= 2pt,shorten <= 2pt,->] (x1) -- (x5); 
 \draw[violet,dashed,shorten <= 3pt,shorten >= 3pt,<-] (x6).. controls (\ww:\rarb) .. (x2);
 \draw[violet,shorten >= 2pt,shorten <= 2pt,->] (x6) -- (x2);
\FPset{\rabg}{1.1}
 \draw[black,dashed,shorten <= 3pt,shorten >= 3pt,<-] (x1).. controls (2*\w:\rabg) .. (x3);
 \draw[black,shorten >= 2pt,shorten <= 2pt,->] (x1) -- (x3); 
 \draw[black,dashed,shorten <= 3pt,shorten >= 3pt,<-] (x6).. controls (-2*\w:\rabg) .. (x4);
 \draw[black,shorten >= 2pt,shorten <= 2pt,->] (x6) -- (x4);
\FPset{\ragb}{0.6}
 \draw[cyan,dashed,shorten <= 3pt,shorten >= 3pt,<-] (x1).. controls (5*\ww:\ragb) .. (x4);
 \draw[cyan,shorten >= 2pt,shorten <= 2pt,->] (x1) -- (x4); 
 \draw[cyan,dashed,shorten <= 3pt,shorten >= 3pt,<-] (x6).. controls (-5*\ww:\ragb) .. (x3);
 \draw[cyan,shorten >= 2pt,shorten <= 2pt,->] (x6) -- (x3);

\node[draw=none,below] () at (0mm,-15mm) {$\algt{G_0}$};
   \end{scope}
\end{tikzpicture}
\caption{The output of \wl on input $\compl{C_7}$ and on its vertex-individualized copy.}
\label{fig:C7}
\end{figure}
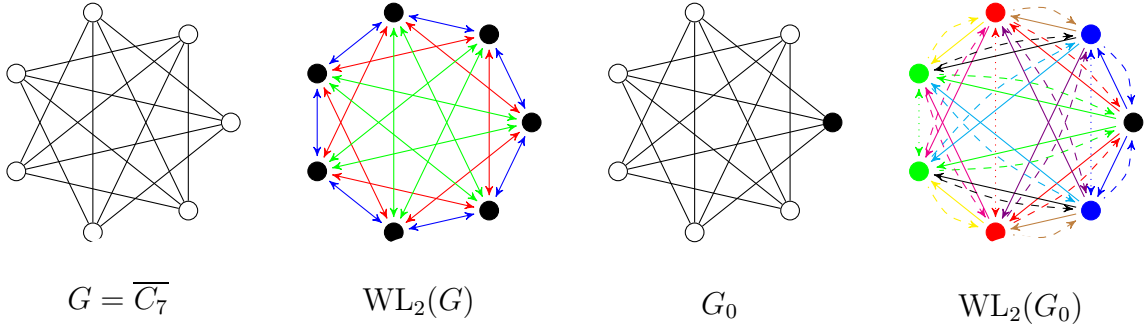

Consider now a simple and still instructive example.
Let $G=\compl{C_7}$ be the complement of the cycle graph
on seven vertices $0,1,\ldots,6$ passed in this order. It is not hard to see that \wl
splits $V(G)^2$ into the four 2-orbits of $\aut(G)$; the diagonal is one of them. 
Note that the three constituent digraphs of $\algt G$ are of the same degree 2;
see Figure \ref{fig:C7}.
Applying \wl to the vertex-individualized graph $G_0$, it can easily
be seen that \wl again splits $V(G)^2$ into the 2-orbits of $\aut(G_0)$.
Note that $\algt{G_0}$ also has exactly three constituent digraphs of outdegree 2,
while all other constituent digraphs of $\algt{G_0}$ have outdegree 1.
We see that the outdegrees of the constituent digraphs for $G$ and its
vertex-individualized copies are distributed similarly.
This similarity proves to be a characterizing property
of vertex-transitive graphs on a prime number of vertices.

\begin{theorem}\label{thm:p}
  Let $p$ be a prime, and $G$ be a graph with $p$ vertices.
  Suppose that $G$ is neither complete nor empty. Then $G$
  is vertex-transitive if and only if the following conditions are true:
  \begin{enumerate}
  \item\label{item:1}
    If run on $G$, \wl does not split the diagonal, that is,
    all vertices in $\algt G$ are equally colored.
  \item\label{item:2}
    All constituent digraphs of $\algt G$ have the same outdegree $d$ and,
    hence, there are $\frac{p-1}d$ constituent digraphs.
  \item\label{item:3}
    For every $u\in V(G)$, exactly $\frac{p-1}d$ constituent digraphs
    in $\algt{G_u}$ have outdegree $d$, and all others have outdegree~$1$.    
  \end{enumerate}  
\end{theorem}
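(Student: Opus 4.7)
The plan is to handle both directions via the theory of coherent configurations, invoking schurity of prime-order circulant schemes for $(\Rightarrow)$ and schurity of thin coherent configurations (together with a schurity result for schemes on a prime number of points) for $(\Leftarrow)$.

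\textbf{Forward direction.} Assume $G$ is vertex-transitive. By Turner's theorem \cite{Turner67}, $G$ is a Cayley graph of $\mathbb{Z}_p$, and by Burnside's theorem on transitive groups of prime degree (together with the hypothesis that $G$ is neither complete nor empty, so $\aut(G)$ is not $2$-transitive), $\aut(G)$ sits inside $\mathrm{AGL}(1,p) = \mathbb{Z}_p \rtimes H$ for some $H \le \mathbb{Z}_p^\times$; set $d=|H|$. Since the coherent configuration of a circulant graph on $p$ points is known to be schurian, both $\algt{G}$ and each $\algt{G_u}$ agree with the 2-orbit partition of $\aut(G)$ and of $\aut(G)_u$, respectively. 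Condition 1 is immediate, and condition 2 follows because $H$ acts freely on $V\setminus\{0\}$, so every non-diagonal 2-orbit has size $pd$ and the corresponding constituent digraph has outdegree $d$. For condition 3, a direct 2-orbit computation for $\aut(G)_u$ (a conjugate of $H$, acting freely on $V\setminus\{u\}$ with orbits of size $d$) shows that the only outdegree-$d$ constituents of $\algt{G_u}$ are the $(p-1)/d$ out-stars $\{u\}\times O_i$; every other constituent has outdegree $1$, because free-ness forces the stabiliser of any $(v,w)$ with $v\ne u$ to be trivial.

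\textbf{Backward direction.} Assume conditions 1--3 and fix $u\in V$. By conditions 1 and 2, $\cX:=\algt{G}$ is an equivalenced association scheme on $V$ with $(p-1)/d$ basic relations of valency $d$. Condition 3 says that the $(p-1)/d$ out-stars from $u$ partition $V\setminus\{u\}$ into fibers $F_1,\ldots,F_{(p-1)/d}$ of size $d$, and that \emph{every} basic relation of $\cX_u:=\algt{G_u}$ whose arcs lie inside $V\setminus\{u\}$ has valency $1$. Thus $\cX_u$ restricted to $V\setminus\{u\}$ is a thin coherent configuration. I would then invoke the classical fact that every thin coherent configuration is schurian: this furnishes a group $\Gamma \le \mathrm{Sym}(V\setminus\{u\})$ whose 2-orbits are precisely the basic relations of $\cX_u|_{V\setminus\{u\}}$, with orbits $F_1,\ldots,F_{(p-1)/d}$. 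A short orbit-stabiliser argument exploiting the valency-$1$ condition inside $V\setminus\{u\}$ shows that any element of $\Gamma$ fixing a single vertex must fix all, so $\Gamma$ acts freely and $|\Gamma|=d$. Extending each $\gamma\in\Gamma$ by $\gamma(u):=u$ places it in $\aut(\cX_u)\subseteq\aut(G)$, so $\aut(G)_u$ contains a subgroup of order $d$ acting freely on $V\setminus\{u\}$ with orbits $F_1,\ldots,F_{(p-1)/d}$.

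\textbf{Where the difficulty lies.} The main obstacle is to promote this local symmetry at a single $u$ into transitivity of $\aut(G)$ on the whole of $V$. My intended route is to apply schurity of $\cX$ itself, using a known theorem that every association scheme on a prime number of points is schurian --- this is one of the ``results about the schurity property of certain coherent configurations'' flagged in the introduction. Schurity of $\cX$ produces a subgroup $K\le\aut(G)$ whose 2-orbits on $V^2$ are the basic relations of $\cX$, and condition 1 forces $K$ to be transitive on $V$, giving vertex-transitivity of $G$. The delicate point is to ensure that the schurity result is genuinely available under conditions 1--3 alone; if an off-the-shelf statement is not in the form required, the fallback is to glue the local groups $\Gamma_u$ (as $u$ ranges over $V$) with the regular normal $\mathbb{Z}_p$-subgroup supplied by Burnside's theorem for any transitive subgroup of $\mathrm{Sym}(V)$ of prime degree, thereby identifying a transitive subgroup of $\aut(G)$ inside $\mathrm{AGL}(1,p)$.
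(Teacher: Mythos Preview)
Your forward direction is essentially the paper's argument, modulo one imprecision: schurity of $\clo{G}$ and schurity of $\clo{G_u}=\clo{G}_u$ are two \emph{different} facts. The first follows from the Leung--Man structure theorem (Proposition~\ref{prop:trans-schur}), but the second requires a separate, harder result on one-point extensions of cyclotomic schemes (Proposition~\ref{prop:ext-cyclotomic}). You should cite them separately rather than sweeping both under ``circulant schemes are schurian''.

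The backward direction has a genuine gap. Your intended route---``every association scheme on a prime number of points is schurian''---is \emph{false}; the paper explicitly notes this (see the sentence before Proposition~\ref{prop:trans-schur}). The schurity result that \emph{is} available, Proposition~\ref{prop:trans-schur}, already assumes $\aut(\cX)$ is transitive, which is precisely what you are trying to prove, so invoking it is circular. Your Burnside fallback is equally circular: Burnside's theorem concerns transitive subgroups of $\mathrm{Sym}(V)$, and you do not yet have one.

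What you are missing is the endgame of Lemma~\ref{lem:MP}. You correctly reach the point where, for each $u$, the stabilizer $A_u$ (with $A=\aut(\cX)=\aut(G)$) has order exactly $d$ and its orbits on $V\setminus\{u\}$ all have size $d$. Now use that this holds for \emph{every} $u$: if $Q$ is an $A$-orbit and $u\in Q$, then $Q\setminus\{u\}$ is a union of $A_u$-orbits, so $|Q|\equiv 1\pmod d$; but for $u'\notin Q$ the same reasoning would give $|Q|\equiv 0\pmod d$. Hence no such $u'$ exists and $Q=V$. This modular counting trick replaces the unavailable global schurity.

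A secondary gap: Condition~3 does \emph{not} directly say that the $(p-1)/d$ outdegree-$d$ constituents of $\cX_u$ are the out-stars $\{u\}\times F_i$; a priori some of them could live inside $V\setminus\{u\}$. The paper handles this with a short counting argument: each fiber $X$ of $\cX_u$ has size $1$ or $d$ (since $\{u\}\times X$ is a basis relation), and if there were $a>1$ singleton fibers then the $a(p-a)/d$ relations of the form $X\times Y$ with $|X|=1$, $|Y|=d$ would already exceed the allowed $(p-1)/d$ relations of valency $d$. This forces $a=1$, whence the fibers of $(\cX_u)_{V\setminus\{u\}}$ are exactly the sets $N_u(S)$ and the restricted configuration is semiregular. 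You should fill this in rather than asserting it.
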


Since the color partition of $\algt G$ for a $p$-vertex graph $G$
can be computed in time $O(p^3\log p)$ \cite{ImmermanL90},
Conditions \ref{item:1}--\ref{item:3}
can be verified in time $O(p^4\log p)$, which yields an algorithm of
this time complexity for recognition of vertex-transitivity of graphs
with a prime number of vertices.

As it will be discussed in Remark \ref{rem:29} below, there are graphs $G$ and $H$ with
a prime number of vertices such that $G$ is vertex-transitive, $H$ is not,
and still they are indistinguishable by \wl. This implies
that Theorem \ref{thm:p} is optimal in that it uses as small WL dimension
as possible and, also, that the condition involving the vertex individualization
cannot be dropped. Note that \WL1, which stands for the classical degree
refinement, does not suffice even when run on $G$ and all $G_u$
because the output of \WL1 on these inputs is subsumed by the output
of \wl on $G$ alone.

Theorem \ref{thm:p} is proved in Subsection \ref{ss:proof-p}.
The next subsection provides the necessary preliminaries.

\subsection{Coherent configurations}\label{ss:ccs}

A detailed treatment of the material presented below
can be found in \cite{CP2019}.
The stable partition $\cS=\cS^t$ of $V(G)^2$ produced by \wl
on an input graph $G$ has certain regularity properties,
which are equivalent to saying that the pair $(V,\cS)$ forms
a coherent configuration. This concept, introduced by Higman \cite{Higman71},
is defined as follows.

A \emph{coherent configuration} $\cX=(V,\cS)$ is formed
by a set $V$, whose elements are called \emph{points},
and a partition $\cS=\{S_1,\ldots,S_m\}$ of the Cartesian square $V^2$,
that is, $\bigcup_{i=1}^mS_i=V^2$ and any two $S_i$ and $S_j$ are disjoint.
An element $S_i$ of $\cS$ is referred to as a \emph{basis relation} of $\cX$.
The partition $\cS$ has to satisfy the following three conditions:
\begin{enumerate}[(A)]
\item\label{item:A}
If a basis relation $S\in\cS$ contains a loop $(u,u)$, then
all pairs in $S$ are loops.
\item \label{item:B}
For every $S\in\cS$, the \emph{transpose relation} $S^*=\Set{(v,u)}{(u,v)\in S}$
is also in~$\cS$.
\item\label{item:C}
For each triple $R,S,T\in\cS$, the number $p(u,v)=|\Set{w}{(u,w)\in R,\,(w,v)\in S}|$
for a pair $(u,v)\in T$ does not depend on the choice of this pair in~$T$. 
\end{enumerate}
In other words, if $\cS$ is seen as a color partition of $V^2$,
then such a coloring is stable under \wl refinement.

We describe two important sources of coherent configurations.
Let $\cT$ be an arbitrary family of subsets of the Cartesian square $V^2$.
There exists a unique coarsest partition $\cS$ of $V^2$ 
such that every $T\in\cT$ is a union of elements of $\cS$ and $\cX=(V,\cS)$
is a coherent configuration; see \cite[Section 2.6.1]{CP2019}. We call $\cX=(V,\cS)$ 
the \emph{coherent closure} of $\cT$ and denote it by $\clo\cT$. 

Given a vertex-colored undirected graph $G$ on the vertex set $V$,
let $\cT$ consist of the set of the pairs $(u,v)\in V^2$ such that
$\{u,v\}$ is an edge of $G$ and the sets of loops $(u,u)$ for
all vertices $u$ of the same color in $G$. Then $\clo\cT$
is exactly the stable partition produced by \wl on input $G$.
We denote this coherent configuration by~$\clo G$.

Given a coherent configuration $\cX=(V,\cS)$ and a point $u\in V$,
the coherent configuration $\cX_u=\clo{\cS\cup\{\{(u,u)\}\}}$
is called a \emph{one-point extension} of $\cX$.
This concept is naturally related to the notion of a vertex-individualized
graph, in that
\begin{equation}\label{eq:ind-ext}
\clo{G_u}=\clo{G}_u.
\end{equation}

Another source of coherent configurations is as follows.
Let $K$ be a permutation group on a set $V$.
Denote the set of 2-orbits of $K$ by $\cS$.
Then $\cX=(V,\cS)$ is a coherent configuration,
which we denote by $\inv(K)$.
Coherent configurations obtained in this way are
said to be \emph{schurian}.

We define an \emph{automorphism} of a coherent configuration $\cX=(V,\cS)$
as a bijection $\alpha$ from $V$ onto itself such that, for every $S\in\cS$
and every $(u,v)\in S$, the pair $(\alpha(u),\alpha(v))$ also belongs to $S$.
The group of all automorphisms of $\cX$ is denoted by $\aut(\cX)$.
A coherent configuration $\cX$ is schurian if and only if
\begin{equation}\label{eq:schurity}
\cX=\inv(\aut(\cX)).
\end{equation}
Note also that the connection between the coherent closure of a graph
and \wl implies that
\begin{equation}\label{eq:aut-clo}
\aut(\clo G)=\aut(G).
\end{equation}

A set of points $X\subseteq V$ is called a \emph{fiber} of $\cX$
if the set of loops $\Set{(x,x)}{x\in X}$ is a basis relation of~$\cX$.
Denote the set of all fibers of $\cX$ by $F(\cX)$.
By Property \ref{item:A},
$F(\cX)$ is a partition of $V$. Property \ref{item:C} implies that
for every basis relation $S$ of $\cX$ there are, not necessarily distinct, 
fibers $X$ and $Y$ such that $S\subseteq X\times Y$.
We use the notation $\outn xS=\Set{y}{(x,y)\in S}$ for the set of
all points in $Y$ that are in relation $S$ with $x$.
Note that $|\outn xS|=|\outn{x'}S|$ for any $x,x'\in X$.
We call this number the \emph{valency} of $S$.
If every basis relation $S$ of $\cX$ has valency 1, then $\cX$
is called \emph{semiregular}.

\begin{proposition}[{see \cite[Exercise 2.7.35]{CP2019}}]\label{prop:semireg}
A semiregular coherent configuration is schurian.
\end{proposition}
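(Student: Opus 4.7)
The plan is to show that, for every basis relation $S\subseteq X\times Y$ with $X,Y\in F(\cX)$ and any two pairs $(u,w),(v,z)\in S$, there is an automorphism $\alpha\in\aut(\cX)$ satisfying $\alpha(u)=v$ and $\alpha(w)=z$. Since basis relations always refine the orbit partition of $\aut(\cX)$ on $V^2$, this will yield $\cX=\inv(\aut(\cX))$, i.e.\ schurity.

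First I would record the structural consequences of semiregularity. By property (A), the diagonal $\Delta_X=\Set{(x,x)}{x\in X}$ of each fiber $X$ is a basis relation. Since every basis relation $S\subseteq X\times Y$ has valency $1$ and, by (B), so does $S^*$, the relation $S$ is the graph of a bijection $\sigma_S\colon X\to Y$. The key consequence is that the basis relations are closed under relational composition: for composable $R\subseteq X\times Y$ and $T\subseteq Y\times Z$, the composition $R\cdot T$ coincides with the graph of $\sigma_T\circ\sigma_R$ and is therefore of valency $1$; by (C) it splits into basis relations inside $X\times Z$, each of valency $1$, so it must already be a single basis relation.

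For every fiber $X$ and any two points $u,v\in X$ I would then define a map $\alpha_{u,v}\colon V\to V$ by the rule $\alpha_{u,v}(w)=\sigma_{R_w}(v)$, where $R_w$ is the unique basis relation containing the pair $(u,w)$ (so that $R_w\subseteq X\times Y$ whenever $w$ lies in the fiber $Y$). This map fixes every fiber setwise and is injective on each: if $\alpha_{u,v}(w_1)=\alpha_{u,v}(w_2)=y'$ for $w_1,w_2\in Y$, then $(v,y')\in R_{w_1}\cap R_{w_2}$, which forces $R_{w_1}=R_{w_2}$ by disjointness of basis relations, and valency $1$ then yields $w_1=w_2$. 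In particular $\alpha_{u,v}(u)=\sigma_{\Delta_X}(v)=v$. To verify that $\alpha_{u,v}$ is an automorphism, I would take any $(y,z)$ lying in a basis relation $T\subseteq Y\times Z$ and let $R=R_y$ and $R'=R_z$. Then $(u,z)\in R\cdot T$, so the composition closure proved above yields $R\cdot T=R'$; translated into bijections this reads $\sigma_{R'}(v)=\sigma_T(\sigma_R(v))$, i.e.\ $(\alpha_{u,v}(y),\alpha_{u,v}(z))\in T$. Being a bijection on $V$ that maps each basis relation into itself, $\alpha_{u,v}$ preserves every basis relation setwise, so $\alpha_{u,v}\in\aut(\cX)$.

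Finally, applied to any $(u,w),(v,z)\in S\subseteq X\times Y$ the automorphism $\alpha_{u,v}$ satisfies $\alpha_{u,v}(u)=v$ and, by valency $1$ of $S$, $\alpha_{u,v}(w)=\sigma_S(v)=z$. The main technical step I expect to spell out carefully is the closure of basis relations under composition; once this groupoid-like structure is in place, the definition of $\alpha_{u,v}$ and the check that it is an automorphism fall out almost mechanically.
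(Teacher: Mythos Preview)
The paper does not actually prove this proposition; it is quoted without proof from \cite[Exercise~2.7.35]{CP2019}. Your argument is the standard one and is correct: in a semiregular configuration every basis relation is the graph of a bijection between two fibers, these bijections are closed under composition by axiom~(C), and the resulting ``regular groupoid'' structure lets you transport any point $u$ to any point $v$ in the same fiber by an automorphism $\alpha_{u,v}$ acting coherently on all fibers.

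One minor point worth tightening: you assert that $\alpha_{u,v}$ is a bijection on $V$ after checking only injectivity. In the finite setting of the paper this is harmless, since $\alpha_{u,v}$ maps each fiber into itself and an injective self-map of a finite set is surjective. If you want the argument to stand on its own, simply observe that $\alpha_{v,u}$ is a two-sided inverse: if $w'=\alpha_{u,v}(w)$ then $(v,w')$ lies in the same basis relation $R_w$ as $(u,w)$, so $\alpha_{v,u}(w')=\sigma_{R_w}(u)=w$.
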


Given a set of points $U\subseteq V$ that is a union of fibers,
let $\cS_U$ denote the set of all basis relations $S\in\cS$
such that $S\subseteq X\times Y$ for some, not necessarily distinct,
fibers $X\subseteq U$ and $Y\subseteq U$. As easily seen,
$\cX_U=(U,\cS_U)$ is a coherent configuration.

If a coherent configuration has a single fiber, it is called \emph{association scheme}.

\subsection{Proof of Theorem \ref{thm:p}}\label{ss:proof-p}

\paragraph{Necessity.}
Given a vertex-transitive graph $G$ with $p$ vertices, where $p$ is prime,
we have to check Conditions \ref{item:1}--\ref{item:3}.
Condition \ref{item:1} follows immediately from
vertex-transitivity; see the discussion in the end of Section \ref{s:wl}.
For Condition \ref{item:2}, we use two basic results on vertex-transitive graphs with
a prime number of vertices. First, every such graph is isomorphic to a \emph{circulant
graph}, i.e.,
a Cayley graph of a cyclic group, because every transitive group of permutations
of a set of prime cardinality $p$ contains a $p$-cycle (Turner \cite{Turner67}). 
Let $\fp$ denote the $p$-element field, $\fpa$ its additive group, i.e.,
the cyclic group of order $p$, and $\fpm$ its multiplicative group, which is
isomorphic to the cyclic group of order $p-1$.
Another useful fact (Alspach \cite{Alspach73}) is that,
if a set $Z\subset\fp$ is non-empty and $Z\ne\fpm$, then the automorphism group of the circulant
graph $\cay(\fpa,Z)$ consists of the permutations
\begin{equation}\label{eq:autom}
x\mapsto ax+b,\,x\in\fp,\text{ for all }a\in M,\,b\in\fpa,
\end{equation}
where $M=M(Z)$ is the largest subgroup of $\fpm$ of even order such that $Z$
is a union of cosets of $M$. This subgroup is well defined because
the condition $Z=-Z$ implies that $Z$ is split into pairs $\{z,-z\}$
and, hence, is a union of cosets of the multiplicative subgroup $\{1,-1\}$.
For example, $\compl{C_7}=\cay(\bF_7^+,\{2,3,4,5\})$ and $M(\{2,3,4,5\})=\{1,-1\}$.
Without loss of generality we assume that $G=\cay(\fpa,Z)$ and denote $K=\aut(G)$.

Let $\cX=(\fpa,\cS)$ be the coherent closure of $G$.
Recall that $\cS$ is exactly the stable partition of $V(G)^2$ produced
by \wl on input $G$. The irreflexive basis relations of $\cX$ are exactly the constituent
digraphs of $\algt{G}$, and we have to prove that all of them have the same valency.

Condition \ref{item:1} says that $\cX$ is an association scheme.
In general, not all association schemes with a prime number of points are
schurian (see, e.g., \cite[Section 4.5]{CP2019}). Nevertheless,
the theorem by Leung and Man on the structure of Schur rings over cyclic groups
implies the following fact.

\begin{proposition}[{see \cite[Theorem 4.5.1]{CP2019}}]\label{prop:trans-schur}
  Let $\cX=(V,\cS)$ be an association scheme with a prime number of points.
  If $\aut(\cX)$ acts transitively on $V$, then $\cX$ is schurian.
\end{proposition}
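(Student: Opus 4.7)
My plan is to recognize $\cX$ as a translation scheme on $\bZ/p\bZ$ and then apply the classical structure theorem for Schur rings over cyclic groups of prime order to realize $\cX$ as $\inv(K)$ for an explicit regular subgroup $K\le\aut(\cX)$, from which schurity drops out by a refinement squeeze.

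First I would extract a $p$-cycle from $\aut(\cX)$: since $\aut(\cX)$ is transitive on the $p$-element set $V$, its order is divisible by $p$, and Cauchy's theorem produces an element $\sigma\in\aut(\cX)$ of order $p$, which must act as a single $p$-cycle. Identifying $V$ with $\bZ/p\bZ$ so that $\sigma$ is the shift $x\mapsto x+1$, I observe that every basis relation $S\in\cS$ is stable under the diagonal action $(x,y)\mapsto(x+1,y+1)$ and therefore has the form $S_C=\{(x,x+c):x\in\bZ/p\bZ,\,c\in C\}$ for a unique $C\subseteq\bZ/p\bZ$. The family of connection sets $C$ partitions $\bZ/p\bZ$, contains $\{0\}$ (the diagonal), and is closed under $c\mapsto -c$ by axiom (B); in other words, $\cX$ encodes a Schur ring over $\bZ/p\bZ$. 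At this point I would invoke the structure theorem for such Schur rings (see \cite[Theorem 4.5.1]{CP2019}): every Schur ring over $\bZ/p\bZ$ is \emph{rational}, meaning that for some subgroup $M\le\fpm$ (necessarily containing $-1$ by inversion-closure) the nontrivial $C$'s are exactly the $M$-orbits on $\bZ/p\bZ\setminus\{0\}$ under multiplication.

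Setting $K:=M\ltimes\bZ/p\bZ$ with the natural affine action $x\mapsto ax+b$, a direct inspection shows that the 2-orbits of $K$ on $V\times V$ are precisely the diagonal together with the relations $S_C$ produced in the previous step, so $\cX=\inv(K)$. Moreover each element of $K$ manifestly preserves every basis relation $S_C$, giving $K\le\aut(\cX)$. Two opposite refinements then collide: on the one hand, every basis relation of $\cX$ is setwise preserved by $\aut(\cX)$ and hence is a union of 2-orbits of $\aut(\cX)$, so $\inv(\aut(\cX))$ refines $\cX$; on the other, $K\le\aut(\cX)$ implies that 2-orbits of $K$ refine 2-orbits of $\aut(\cX)$, so $\cX=\inv(K)$ refines $\inv(\aut(\cX))$. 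Together these squeeze $\cX=\inv(\aut(\cX))$, which by \refeq{schurity} is the definition of schurity.

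The main obstacle is the Schur-ring classification itself, the only nonelementary input; once it is granted, everything else---Cauchy's theorem, translation-invariance of $\cS$, the affine-group construction, and the two-sided refinement squeeze---is routine bookkeeping with group actions and 2-orbits. In this sense the proposition is essentially a transcription of that classification into the language of coherent configurations.
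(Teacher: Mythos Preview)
The paper does not give its own proof of this proposition; it quotes the result from \cite[Theorem 4.5.1]{CP2019}, noting in the sentence just before the statement that it follows from the Leung--Man structure theorem for Schur rings over cyclic groups. Your proposal therefore cannot be compared against an in-paper argument, but it is the standard derivation and is essentially correct: extract a $p$-cycle via Cauchy, recognise the scheme as a translation scheme over $\bZ/p\bZ$, invoke the classification of Schur rings over a cyclic group of prime order (this is Schur's classical theorem, the prime-order special case of Leung--Man, and indeed the only nonelementary input), and read off $\cX=\inv(K)$ for the affine group $K=M\ltimes\fpa$. The final refinement squeeze is correct but already superfluous once $\cX=\inv(K)$ is established, since that equality is schurity by definition; the characterisation \refeq{schurity} is a consequence, not the definition.

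One small correction: your parenthetical that $M$ ``necessarily contain[s] $-1$ by inversion-closure'' is not warranted. Axiom~(B) only forces that $-C$ is again a connection set whenever $C$ is, not that $C=-C$; for a non-symmetric association scheme one may well have $-1\notin M$, and every step of your argument goes through unchanged without that assumption. Also, be careful that the reference you cite for the Schur-ring classification is not simply the proposition you are proving restated; the substantive input is Schur's 1933 result (or its restatement elsewhere in \cite{CP2019}), which is logically prior to the proposition.
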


By Equality \refeq{aut-clo}, $\aut(\cX)=K$. Since the group $K$ is transitive,
Proposition \ref{prop:trans-schur} implies that $\cX$ is schurian, and we have
$\cX=\inv(K)$ by Equality \refeq{schurity}. This yields Condition \ref{item:2} for $d=|M|$.
Indeed, every irreflexive basis relation $S\in\cS$ has valency $|M|$.
To see this, it is enough to count the number of pairs $(0,y)$ in $S$.
Fix an arbitrary pair $(0,y)\in S$. A pair $(0,y')$ is in the 2-orbit
containing $(0,y)$ if and only if $y'=ay$ for $a\in M$, for which we have $|M|$ possibilities.

It remains to prove Condition \ref{item:3}.
By vertex-transitivity, all vertex-individualized
copies of $G$ are isomorphic and, therefore, it is enough to consider $G_0$.
We have to count the frequencies of valencies in $\cX_0$. Note that $\cX_0=\clo{G_0}$
by Equality \refeq{ind-ext}.

It is generally not true that a one-point extension of a schurian coherent
configuration is schurian; see \cite[Section 3.3.1]{CP2019}. Luckily,
this is the case in our setting. The proof of the following statement
is rather involved.

\begin{proposition}[{see \cite[Theorem 4.4.14]{CP2019}}]\label{prop:ext-cyclotomic}
  If $\cX=\inv(K)$, where $K$ is the group of permutations of the form \refeq{autom}
  for a subgroup $M$ of $\fpm$, then the one-point extension $\cX_0$ is schurian.
\end{proposition}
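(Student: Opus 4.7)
The natural strategy is to prove the stronger statement $\cX_0=\inv(K_0)$, where $K_0=\{x\mapsto ax:a\in M\}$ is the one-point stabilizer of $0$ in $K$. Such a description of $\cX_0$ as the invariant configuration of a group is, by definition, a certificate of schurity, and it even identifies $\aut(\cX_0)$ with $K_0$.

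The easy half of the equality is the refinement: every 2-orbit of $K_0$ lies in a basis relation of $\cX_0$. Indeed, $K_0$ is a subgroup of $K=\aut(\cX)$ fixing the point $0$, so $K_0$ preserves $\cS\cup\{\{(0,0)\}\}$ and hence also its coherent closure; so $K_0\le\aut(\cX_0)$. Moreover $K_0\cong M$ acts freely on $\fpm$ because $ax=x$ with $x\ne 0$ forces $a=1$ over a field, its orbits on $\fpm$ are the cosets of $M$, and inside a product $Mc_1\times Mc_2$ the 2-orbits of $K_0$ are parametrized by the ratio $r=y/x\in\fpm$, each orbit of size $|M|$. Thus $\inv(K_0)$ is a semiregular coherent configuration, and is therefore schurian already by Proposition~\ref{prop:semireg}.

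The remaining task is to show that the coherent closure $\cX_0$ already attains this refinement. The fiber structure comes essentially for free: after one WL-refinement step, the color of $v\in\fpm$ encodes $\algt{0,v}$, which determines the coset $Mv$, so $\{0\}$ and the cosets $Mc$ are indeed fibers of $\cX_0$. The hard part is separating pairs $(x,y)\in Mc_1\times Mc_2$ that have distinct ratios $y/x$: such pairs may sit in the same $\cS$-relation $R_c$ and even share $(Mx,My)$, so the further refinement must extract arithmetic information from iterated triangular counting against the individualized point $0$ and the already-separated fibers.

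This last separation step is the main obstacle, and it is the reason why the proof in \cite[Theorem~4.4.14]{CP2019} is described as involved. My plan to handle it is to invoke the Leung--Man classification of Schur rings over cyclic groups of prime order, which is already the key ingredient behind Proposition~\ref{prop:trans-schur}. Schematically: the Schur-type algebra attached to $\cX_0$, pulled back to $\mathbb{C}[\fpa]$, sits between the $M$-cyclotomic Schur ring and the full group algebra $\mathbb{C}[\fpa]$; the Leung--Man theorem forces any such intermediate ring to be itself cyclotomic, associated with a supergroup $M'\supseteq M$; and the fact that $\cX_0$ arises by singling out a single point (not an entire coset of a larger group) should rule out $M'\supsetneq M$, leaving $\cX_0=\inv(K_0)$ as required. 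The technical heart is precisely the translation between the combinatorial refinement process defining $\cX_0$ and this algebraic classification.
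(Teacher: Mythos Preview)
The paper does not prove this proposition: it is quoted from \cite[Theorem~4.4.14]{CP2019} with the remark that the proof ``is rather involved'', so there is no in-paper argument to compare against. Your framing is correct --- with $K_0=\{x\mapsto ax:a\in M\}$, proving $\cX_0=\inv(K_0)$ yields schurity, and both the easy inclusion and the identification of the fibers of $\cX_0$ as $\{0\}$ together with the cosets $Mc$ are fine.

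The gap is in the hard direction. Leung--Man classifies Schur rings over $\bZ_p$, i.e.\ \emph{translation-invariant} association schemes on $\fpa$. But $\cX_0$ is neither homogeneous nor invariant under the shifts $x\mapsto x+b$: individualizing $0$ destroys exactly that symmetry. There is no well-defined ``Schur-type algebra attached to $\cX_0$, pulled back to $\mathbb{C}[\fpa]$'' lying between the $M$-cyclotomic ring and $\mathbb{C}[\fpa]$; the fiber partition of $\cX_0$ only recovers the $M$-cosets (hence $\cX$ itself), while the finer data carried by $\cX_0$ --- the separation of pairs $(x,y)\in\fpm\times\fpm$ by the ratio $y/x$ --- is \emph{multiplicative} information on $\fpm$ that corresponds to no translation-closed partition of $\fpa$. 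You have not specified an object to which Leung--Man applies, and I do not see how to manufacture one from $\cX_0$. Incidentally, the inclusion in your sketch is also reversed: a Schur ring over $\bZ_p$ refining the $M$-cyclotomic one is $M'$-cyclotomic for a \emph{subgroup} $M'\le M$ (smaller multiplier group means finer partition, hence larger algebra), and ``$M'=M$'' would then merely say that your pulled-back object is $\cX$, which tells you nothing about the extension. The proof in \cite{CP2019} proceeds by different, more direct means specific to cyclotomic schemes; this is why the paper flags it as involved rather than reducing it to Proposition~\ref{prop:trans-schur}.
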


Taking into account Equality \refeq{aut-clo}, we have
$$
\aut(\cX_0)=\aut(\clo{G_0})=\aut(G_0)=\aut(G)_0=K_0,
$$
where $K_0$ is the one-point stabilizer of $0$ in $K$, that is,
the subgroup of $K$ consisting of all
permutations $\alpha\in K$ such that $\alpha(0)=0$. Obviously,
$K_0=\Set{x\mapsto ax,\,x\in\fp}_{a\in M}$.

Let $S$ be a 2-orbit of $K_0$. If $S$ contains a pair $(0,y)$, then it
consists of all pairs $(0,y')$ for $y'\in My$ and, hence, has valency $|M|$.
If $S$ contains a pair $(z,y)$ with $z\ne0$ and $y\ne z$, then $(z,y)$ is the only
element of $S$ with the first coordinate $z$, and $S$ has valency 1.
The proof of Condition \ref{item:3} is complete.

\paragraph{Sufficiency.}
Let $G$ be a graph satisfying Conditions \ref{item:1}--\ref{item:3} stated in the theorem.
Let $\cX=\clo G$. Condition \ref{item:1} says that $\cX$ is an association scheme.
By Equality \refeq{aut-clo}, it suffices
to prove that the group $\aut(\cX)$ is transitive.
The proof is based on the following lemma.
Though a stronger fact is established in \cite[Theorem 7.1]{MP2012},
we here include a proof of the lemma for the reader's convenience.
We closely follow \cite{MP2012} but provide a bit more details
for non-experts in algebraic graph theory.

\begin{lemma}\label{lem:MP}
  Let $\cX=(V,\cS)$ be an association scheme. Suppose that the following
  two conditions are true for every point $u\in V$:
  \begin{enumerate}[(I)]
  \item\label{item:i}
    the coherent configuration $(\cX_u)_ {V\setminus\{u\}}$ is semiregular, and
  \item\label{item:ii}
    $F(\cX_u)=\Set{\outn uS}{S\in\cS}$.
  \end{enumerate}
  Then the group $\aut(\cX)$ acts transitively on~$V$.    
\end{lemma}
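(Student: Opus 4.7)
The plan is to first establish that the one-point extension $\cX_u$ is schurian for every $u\in V$. By condition (\ref{item:i}) and Proposition~\ref{prop:semireg}, the coherent configuration $(\cX_u)_{V\setminus\{u\}}$ is schurian; let $H$ denote its automorphism group. Any $\sigma\in H$ extends to an automorphism of $\cX_u$ by setting $\sigma(u)=u$: the only additional basis relations to inspect are the loop $\{(u,u)\}$, together with $\{u\}\times\outn u S$ and $\outn u S\times\{u\}$ for $S\in\cS$, and these are preserved because $\sigma$ permutes each fiber $\outn u S$ within itself, which is exactly condition (\ref{item:ii}). Conversely, any automorphism of $\cX_u$ fixes $u$ (since $\{u\}$ is a fiber of $\cX_u$) and restricts to an element of $H$. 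Hence $\aut(\cX_u)$ is isomorphic to $H$, and its 2-orbits on $V$ are exactly the basis relations of $\cX_u$: those inside $V\setminus\{u\}$ by schurity of $(\cX_u)_{V\setminus\{u\}}$, and those incident with $u$ because $H$ acts transitively on each fiber. Thus $\cX_u=\inv(\aut(\cX_u))$ is schurian.

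To deduce transitivity of $K:=\aut(\cX)$, I would fix $u,v\in V$ and construct an isomorphism of coherent configurations $\phi\colon\cX_u\to\cX_v$ that sends $u$ to $v$ and respects the labeling of fibers and basis relations by $\cS$ — that is, $\phi$ carries $\outn u S$ onto $\outn v S$ and sends each basis relation of $\cX_u$ contained in some $S\in\cS$ to a basis relation of $\cX_v$ contained in the same $S$. Such a $\phi$ is a bijection of $V$ with $\phi(u)=v$ that setwise preserves every $S\in\cS$, hence belongs to $K$. The structural match between $\cX_u$ and $\cX_v$ at the fiber level is immediate, since $|\outn u S|=|\outn v S|$ equals the valency of $S$ in the association scheme~$\cX$. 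At the basis-relation level, for any $R,T\in\cS$ the set $S\cap(\outn u R\times\outn u T)$ splits in $\cX_u$ into a collection of valency-$1$ subrelations whose number is determined solely by the intersection numbers of $\cX$, and is therefore the same as on the $v$-side.

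The main obstacle is producing $\phi$ that realizes this abstract matching. Here the rigidity of semiregular schurian coherent configurations is decisive: because $H$ acts regularly on every fiber, once base points $x_S\in\outn u S$ and $y_S\in\outn v S$ are chosen for each $S\in\cS$, the map $\phi$ is forced on each fiber by equivariance with respect to the regular group action. I would set $\phi(u)=v$ and $\phi(x_S)=y_S$ and extend using the regular actions of the corresponding groups. The most delicate step is verifying that $\phi$ preserves each $S\in\cS$; by coherence of $\cX$ and semiregularity of the restrictions, this reduces to the claim that the base-point choices can be made consistently with the $\cS$-labels on the valency-$1$ basis relations of $\cX_u$ and $\cX_v$, which holds because both configurations are governed by the same intersection numbers of~$\cX$.
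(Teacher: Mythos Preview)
Your first paragraph establishing schurity of $\cX_u$ is correct and essentially coincides with the paper's argument. From that point on, however, you and the paper diverge. The paper does \emph{not} attempt to construct an automorphism taking $u$ to $v$ directly. Instead it extracts a numerical invariant: since $A_u=\aut(\cX)_u$ acts regularly on each fiber $\outn uS$, one gets $|\outn uS|=|A_u|$ for every irreflexive $S\in\cS$; as $\cX$ is an association scheme, this common value $k$ is independent of $u$. Then for any orbit $Q$ of $\aut(\cX)$, choosing $u\in Q$ gives $|Q|\equiv1\pmod k$, while choosing $u'\notin Q$ would give $|Q|\equiv0\pmod k$; hence $Q=V$. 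This two-line divisibility argument replaces your entire second and third paragraphs.

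Your constructive route has a real gap. You assert that the base points $x_S,y_S$ ``can be made consistently with the $\cS$-labels \ldots\ because both configurations are governed by the same intersection numbers of~$\cX$.'' This is the heart of the matter and it is not proved. Equality of intersection numbers gives only an \emph{algebraic} isomorphism of the two labelled configurations; it does not by itself produce a bijection $\phi$ of $V$ respecting every $S\in\cS$. Concretely, to define $\phi$ you need a specific group isomorphism $\psi\colon H\to H'$ between $\aut((\cX_u)_{V\setminus\{u\}})$ and $\aut((\cX_v)_{V\setminus\{v\}})$, and then $\phi(h\cdot x_S)=\psi(h)\cdot y_S$. For $\phi$ to lie in $\aut(\cX)$ you must show that the $\cS$-label of the basis relation of $\cX_u$ determined by $(T,T',h)$ equals the $\cS$-label of the basis relation of $\cX_v$ determined by $(T,T',\psi(h))$, simultaneously for all $T,T'\in\cS$ and $h\in H$. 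Matching counts (how many valency-$1$ pieces of $S$ sit in $\outn uT\times\outn u{T'}$) is necessary but not sufficient for the existence of such $\psi$ and base points; you have not supplied the coherence argument that ties all these choices together. The paper's counting proof sidesteps this difficulty entirely.
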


\begin{proof}
  Let $u\in V$. By Condition \ref{item:i} and Proposition \ref{prop:semireg},
  the coherent configuration $\cX'=(\cX_u)_ {V\setminus\{u\}}$ is schurian.
  We claim that the coherent configuration $\cX_u=(V,\cS_u)$
  is schurian also. Let $K'=\aut(\cX')$ and define $K$ to be a permutation
  group on $V$ that, for each $\alpha'\in K'$, contains the permutation $\alpha$
  which fixes $u$ and coincides with $\alpha'$ on $V\setminus\{u\}$.
  Consider a basis relation $S\in\cS_u$. If $S$ is a basis relation of $\cX'$,
  then it is a 2-orbit of $K'$ and, hence, of $K$. If $S$ is not a basis relation of $\cX'$,
  then $S=\{u\}\times X$ or $S=X\times\{u\}$ for a fiber $X\in F(\cX')$, or $S=\{(u,u)\}$. 
Since $X$ is an orbit of $K'$ and,
  hence, of $K$, the set $S$ is a 2-orbit of $K$ also in this case. Thus, $\cX_u=\inv(K)$.

  Let $A=\aut(\cX)$. Since $\aut(\cX_u)=\aut(\cX)_u=A_u$
  (where $A_u$ denotes the one-point stabilizer of $u$ in $A$)
  and $\cX_u$ is schurian, $\cX_u=\inv(A_u)$. Condition \ref{item:ii}
  implies that, for every $S\in\cS$, $\outn uS$ is an orbit of~$A_u$. 

  Denote the orbit of $A_u$ containing a point $x$ by $A_u(x)$.
  We claim that $|A_u(x)|=|A_u|$ for every point $x\ne u$.
  Indeed, if $\alpha(x)=\beta(x)$ for two different permutations in $A_u$,
  then the stabilizer $(A_u)_x$ contains a non-identity permutation,
  namely $\alpha^{-1}\beta$. Therefore, the 2-orbit of $A_u$ containing
  a pair $(x,y)$ with $y\notin\{u,x\}$ contains also the pair $(x,\alpha^{-1}\beta(y))$.
  Choosing $y$ to be a point moved by $\alpha^{-1}\beta$, we get a contradiction
  with Condition~\ref{item:i}.

  We conclude that $|\outn uS|=|A_u|$ for every irreflexive $S\in\cS$.
  The right hand side of this equality
  does not depend on $S$. The left hand side does not depend on $u$
  because $\cX$ is an association scheme. It follows that there exists
  a number $k$ such that $|A_u|=k$ for all $u\in V$.

  Let $Q$ be an orbit of $A$. Since $A_u$ is a subgroup of $A$,
  $Q$ is a union of orbits of $A_u$. Choosing a point $u$ in $Q$,
  we conclude that $|Q|\equiv1\pmod k$. If there was a point $u'\notin Q$,
  the same argument would yield $|Q|\equiv0\pmod k$. Therefore,
  such $u'$ does not exist, and $Q=V$, as desired.
\end{proof}

Let $u$ be an arbitrary vertex of $G$. By Equality \refeq{ind-ext}, $\cX_u=\clo{G_u}$.
Now, it suffices to derive Conditions \ref{item:i}--\ref{item:ii} in the lemma
from Conditions \ref{item:1}--\ref{item:3} in the theorem.

For a fiber $X\in F(\cX_u)$, note that $\{u\}\times X$ must be a basis relation
of $\cX_u$. Since this relation has valency $|X|$, Condition \ref{item:3}
implies that every fiber in $F(\cX_u)$ is either a singleton or consists
of $d\ge2$ points. Denote the number of singletons in $F(\cX_u)$ by $a$.
Besides of them, $F(\cX_u)$ contains $(p-a)/d$ fibers of size $d$.

For every $X,Y\in F(\cX_u)$ with $|X|=1$ and $|Y|=d$,
$X\times Y$ is a basis relation of $\cX_u$ of valency $d$.
It follows from Condition \ref{item:3} that
$$
\frac{p-1}{d}\ge\frac{a(p-a)}{d}.
$$
Therefore, $p-1\ge a(p-a)$ or, equivalently, $p(a-1)\le(a-1)(a+1)$.
Assume for a while that $a>1$. It immediately follows that $a\ge p-1$.
Since the equality $a=p-1$ is impossible, we conclude that $a=p$.
However, this implies that $d=1$, a contradiction. Thus, $a=1$.
Consequently, every fiber of the coherent configuration $\cX'=(\cX_u)_ {V\setminus\{u\}}$
is of cardinality $d$, and $|F(\cX')|=(p-1)/d$.

Let $S$ be a basis relation of $\cX$. If $S$ is reflexive, then $\outn uS=\{u\}$.
If $S$ is irreflexive, then $\outn uS$ must be a union
of fibers in $F(\cX')$. By Condition \ref{item:2},
the number of irreflexive basis relations in $\cS$ is $(p-1)/d$. 
It follows that $\outn uS$ actually coincides with one of the fibers of $\cX'$.
This proves Condition~\ref{item:ii}.

Since $\cX_u$ contains $(p-1)/d$ basis relations of the kind $\{u\}\times X$
for $X\in F(\cX')$, Condition \ref{item:3} implies that every basis relation
of $\cX'$ is of valency $1$, yielding Condition~\ref{item:i}. 

The proof of Theorem \ref{thm:p} is complete.

\begin{remark}\label{rem:29}
We now argue that there is a vertex transitive graph $G$ and a non-vertex-transitive
graph $H$ such that $G$ and $H$ are indistinguishable by \wl.
Recall that a \emph{strongly regular graph} with parameters $(n,d,\lambda,\mu)$  
is an $n$-vertex $d$-regular graph where every two adjacent vertices
have $\lambda$ common neighbors, and every two non-adjacent vertices
have $\mu$ common neighbors. As easily seen, two strongly regular graphs with
the same parameters are indistinguishable by \wl, and our example
will be given by $G$ and $H$ of this kind.
Let $p$ be a prime (or a prime power) such that $p\equiv1\pmod4$.
The \emph{Paley graph} on $p$ vertices is the Cayley graph
$\cay(\fpa,Y_p)$ where $Y_p$ is the subgroup of $\fpm$
formed by all quadratic residues modulo $p$.
The assumption $p\equiv1\pmod4$ ensures that $-1$ is a quadratic residue modulo $p$
and, hence, $Y_p=-Y_p$. The Paley graph on $p$ vertices is
strongly regular with parameters $(p,\frac{p-1}2,\frac{p-5}4,\frac{p-1}4)$.

Let $G$ be the Paley graph on $29$ vertices. It is known (Bussemaker and Spence;
see, e.g., \cite[Section 9.9]{BrouwerH12}) that there are 40 other strongly regular graphs
with parameters $(29,14,6,7)$. Let $H$ be one of them.
We have only to show that $H$ is not vertex-transitive.
Otherwise, by \cite{Turner67} this would be a circulant graph, that is,
we would have $H=\cay(\fpa,Z)$ for some connection set $Z$.
In this case, the coherent closure $\clo H$ must be schurian by Proposition \ref{prop:trans-schur}.
Since $H$ is strongly regular, \wl colors all pairs of adjacent vertices
uniformly and, therefore, they form a 2-orbit of $\aut(H)$.
It follows that the stabilizer $\aut(H)_0$ acts transitively
on $N(0)$, the neighborhood of 0 in $H$. The aforementioned result of Alspach, 
implies that $Z$ is the subgroup of $\fpm$ of order $(p-1)/2$
i.e., $M=Z$ in \refeq{autom}. This means that $Z=Y_p$ and $H=G$, a contradiction.
\end{remark}

\section{A lower bound for the WL dimension}\label{s:vt}

We now prove a negative result on the recognizability of
vertex-transitivity by \kwl.
We begin with a formal definition of the $k$-dimensional algorithm.
Let $k\ge 2$. Given a graph $G$ with vertex set $V$ as input,
\kwl operates on $V^k$. The initial
coloring of $\baru=(u_1,\ldots,u_k)$ encodes the equality type of this $k$-tuple
and the ordered isomorphism type of the subgraph of $G$ induced by
the vertices $u_1,\ldots,u_k$. 
The color refinement is performed similarly to \refeq{refine-2}.
Specifically, \kwl iteratively colors $V^k$ by
$\alg k{r+1}\baru=\Mset{
  (\alg kr{\baru_1^w},\dots,\alg kr{\baru_k^w})
}{w\in V(G)}$,
where $\baru_i^w=(u_1,\dots,u_{i-1},w,u_{i+1},\dots,u_k)$. If $G$ has $n$ vertices,
the color partition stabilizes in $t\le n^k$ rounds, and \kwl
outputs the coloring $\kalgt{\cdot}=\alg kt{\cdot}$.

We say that \kwl \emph{distinguishes} graphs $G$ and $H$ if the
final color palettes are different for $G$ and $H$, that is,
$\Mset{\kalgt\baru}{\baru\in V(G)^k}\ne\Mset{\kalgt\baru}{\baru\in V(H)^k}$
(note that color renaming in each refinement round must be the same on $G$ and $H$).

\begin{theorem}\label{thm:vt}\hfill
  \begin{enumerate}[\bf 1.]
    \item
  For every $n$ divisible by 16 there are $n$-vertex graphs $G$ and $H$
  such that $G$ is vertex-transitive, $H$ is not, and $G$ and $H$
  are indistinguishable by \kwl as long as $k\le0.01\,\sqrt n$.
\item
For infinitely many $n$ there are $n$-vertex graphs $G$ and $H$
  such that $G$ is vertex-transitive, $H$ is not, and $G$ and $H$
  are indistinguishable by \kwl as long as $k\le0.001\,n$.
  \end{enumerate}
\end{theorem}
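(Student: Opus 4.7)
The plan is to follow the Cai-Fürer-Immerman strategy sketched in the introduction. Starting from a suitable vertex-transitive template $F$, I apply the modified CFI gadget of Figure \ref{fig:CFI} to produce two non-isomorphic regular graphs $A = A(F)$ and $B = B(F)$, and then set $G = A \sqcup A$ and $H = A \sqcup B$. The graph $G$ is vertex-transitive whenever $A$ is, since $\aut(G)$ acts transitively on each component while an involution exchanges the two copies. The graph $H$ is not vertex-transitive, because $A \not\cong B$ forces every automorphism of $H$ to preserve the component partition, and then the $A$-vertices and the $B$-vertices cannot be mixed.

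There are three things to verify. First, $k$-WL fails to distinguish $G$ from $H$ whenever it fails to distinguish $A$ from $B$: on a disjoint union $k$-WL can read off, from the initial colors, the component of each coordinate of a $k$-tuple, so the stable color multiset of $G$ (respectively $H$) is a canonical function of $\Mset{\kalgt A}$ (respectively of $\Mset{\kalgt A}$ together with $\Mset{\kalgt B}$), and under the hypothesis $\Mset{\kalgt A} = \Mset{\kalgt B}$ these two functions agree. Second, Lemmas \ref{lem:twist0} and \ref{lem:twist1} certify that under mild structural assumptions on a vertex-transitive $F$, both $A(F)$ and $B(F)$ are vertex-transitive and still non-isomorphic even after the vertex colors of the classical CFI construction are discarded; these assumptions hold automatically whenever $F$ is $d$-regular with $d\ge 5$. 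Third, one invokes the standard CFI lower bound showing that $k$-WL does not distinguish $A(F)$ from $B(F)$ when $k$ is below a separator or treewidth parameter of $F$, together with the fact that the CFI construction blows up the vertex count of $F$ only by a constant factor.

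The two parts of the theorem correspond to two choices of template family. For Part 1, I would use vertex-transitive Cayley expanders of bounded degree, possibly combined with small vertex-transitive padding components (for instance additional copies of the same $A$, or suitable circulants) so as to hit every $n$ divisible by $16$ exactly; because the available algebraic expanders are sparse and their sizes form a relatively thin set, the resulting CFI threshold yields the bound $k \le 0.01\,\sqrt n$ once the padding overhead is absorbed into constants. For Part 2, I would replace the algebraic expanders by the dense vertex-transitive templates produced by Babai's theorem \cite{Babai91}, which provides a nontrivial isoperimetric expansion in every vertex-transitive graph of sufficiently small diameter. These dense templates yield a separator of size $\Omega(n)$ and hence a CFI threshold of the same order, giving $k \le 0.001\,n$; they are available only at particular vertex counts, which is why Part 2 is stated for infinitely many $n$ rather than in an explicit arithmetic progression.

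The principal obstacle is the simultaneous preservation of vertex-transitivity \emph{and} non-isomorphism of $A(F)$ and $B(F)$ in the uncolored setting. In the standard CFI construction the vertex colors are crucial for forcing $A \not\cong B$, so dropping them may either destroy the non-isomorphism or, by admitting additional automorphisms that cross fibers, obstruct vertex-transitivity. Resolving this tension is precisely the content of Lemmas \ref{lem:twist0} and \ref{lem:twist1} and the underlying modified gadget, and is the technical heart of the construction; a secondary, mostly combinatorial, obstacle is arranging the padding so that every multiple of $16$ is hit in Part 1 without sacrificing the vertex-transitivity of $G$ or the desired lower bound.
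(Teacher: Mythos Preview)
Your high-level strategy—apply the colorless CFI construction to a vertex-transitive template $F$, then set $G=A\sqcup A$ and $H=A\sqcup B$—is exactly what the paper does. But you have swapped the roles of the two template families, and each swap creates a real obstacle.

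For Part~2 you propose ``dense vertex-transitive templates'' of small diameter, so that Babai's bound yields $s(F)=\Omega(v(F))$. This cannot give a linear lower bound in $n$: the CFI construction replaces each vertex of a $d$-regular template by a cell of $2^{d-1}$ vertices, so if $d$ grows with $v(F)$ (as it must when the diameter is bounded), then $n=v(A)$ is exponential in $d$ and the bound $k<s(F)=\Theta(v(F))$ becomes only polylogarithmic in $n$. The paper keeps $d=3$ throughout; the linear bound in Part~2 comes from Chiu's $3$-regular Ramanujan Cayley graphs \cite{Chiu92}, which are genuine bounded-degree expanders with $s(F)=\Omega(v(F))$. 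These exist only at special orders, which is precisely why Part~2 is stated for infinitely many~$n$.

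For Part~1 you propose algebraic expanders plus padding, attributing the drop to $\sqrt n$ to ``padding overhead''. No padding is used, and indeed padding with components not isomorphic to $A$ would destroy the vertex-transitivity of $G$. Instead the paper takes $F=\cay(D_{2q},\{a,ab,ab^r\})$, a $3$-regular Cayley graph of the dihedral group that exists for \emph{every} $q$; then $v(A)=8q$ and $v(G)=16q$, so every multiple of $16$ is hit directly. The $\sqrt n$ bound is the intrinsic quality of this family: its diameter is $\Theta(\sqrt q)$ (Lemma~\ref{lem:diam}), and Babai's expansion bound then yields only $s(F)=\Omega(\sqrt q)$. So Babai's theorem is what drives Part~1, not Part~2.

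Two smaller points. Lemmas~\ref{lem:twist0} and~\ref{lem:twist1} establish only that $A\not\cong B$ in the colorless setting; vertex-transitivity of $A$ and $B$ is a separate argument (Lemma~\ref{lem:vt}). And while the hypothesis $\nu(F)<2k-4$ is indeed automatic for $d\ge5$, the paper works at $d=3$ to keep the blow-up factor $2^{d-1}=4$; there the hypothesis reads $\nu(F)\le1$, i.e.\ $F$ is $4$-cycle-free, and this must be checked for each template family (Lemma~\ref{lem:diam}, Part~2, and the girth bound for Chiu's graphs).
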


The proof is given in Subsection \ref{ss:proof}. 
As discussed in Section \ref{s:intro}, it is based on
a regularized and discolored version of the Cai-Fürer-Immerman construction \cite{CaiFI92},
whose description and analysis takes Subsection~\ref{ss:CFI}.

\subsection{The CFI graphs (regular and colorless)}\label{ss:CFI}

Given a $k$-regular template graph $F$ on $m$ vertices, we construct a graph $A$
with $m2^{k-1}$ vertices by replacing each vertex $v$ of $F$ with a cell $Q(v)$ consisting
of $2^{k-1}$ new vertices. Each cell is endowed with a hypergraph structure
which, being a direct analog of the standard CFI gadget, will determine
the adjacency relation of~$A$.

Specifically, let $Q\subset\{0,1\}^k$ be the set of all boolean vectors
of length $k$ with even number of ones. That is, $(x_1,\ldots,x_k)\in Q$
if and only if $x_1\oplus\ldots\oplus x_k=0$, where $\oplus$ denotes
the addition in $\bZ_2$. Note that $|Q|=2^{k-1}$.
For $i\le k$, the \emph{$i$-th slice partition} of $Q$
consists of two parts $X_i^0$ and $X_i^1$, where $X_i^b=\Set{(x_1,\ldots,x_k)\in Q}{x_i=b}$.

Let $\cX$ be the hypergraph on the vertex set $Q$ with the hyperedges $X_i^b$
for $i\le k$ and $b=0,1$. The slice partitions are interchangeable as,
for each pair $1\le i<j\le k$, there is an automorphism $\alpha_{ij}$ of $\cX$
such that $\alpha_{ij}(X_i^b)=X_j^b$, $\alpha_{ij}(X_j^b)=X_i^b$, and $\alpha_{ij}(X_l^b)=X_l^b$
for any $l\ne i,j$. Specifically, $\alpha_{ij}$ swaps the $i$-th and the $j$-th positions in
each vector $(x_1,\ldots,x_k)\in Q$.

We say that an automorphism $\lambda$ of $\cX$ \emph{preserves} the $i$-th slice partition
if either $\lambda(X_i^b)=X_i^b$ for both $b=0,1$ (the partition is \emph{fixed})
or $\lambda(X_i^b)=X_i^{1-b}$ for both $b=0,1$ (the partition is \emph{flipped}).
Let $\Lambda_Q$ denote the subgroup of $\aut(\cX)$ that consists of all $\lambda$
preserving each of the $k$ slice partitions. For $s\in Q$, define a permutation $\lambda_s$ of $Q$
by $\lambda_s(x)=x\oplus s$, where $\oplus$ denotes the addition in the group $(\bZ_2)^k$.
Note that $\Lambda_Q=\Set{\lambda_s}{s\in Q}$.\footnote{A generalized CFI construction
uses a cell $Q_p\subset\bZ_p$, for any prime $p$, consisting of those vectors $(x_1,\ldots,x_k)$
with $x_1\oplus\ldots\oplus x_k=0$, where $\oplus$ is the addition modulo $p$.
The $i$-th slice partitions of $Q_p$ is defined similarly to the case $p=2$
and consists of $p$ parts. Unlike the case $p=2$, the group of the partition-preserving
automorphisms of $Q_p$ for $p>2$ can be larger than the group of shifting automorphisms $\lambda_s$. 
An analysis of the colorless version of the generalized construction is 
out the scope of this paper.}

Let $Q(v)$ be a copy of $Q$. We relabel the coordinates $1,\ldots,k$ using
the $k$ neighbors of $v$ in $F$. The slice partitions are now
$Q(v)=X^0_{u}(v)\cup X^1_{u}(v)$ for $u\in N(v)$.
The adjacency relation in $A$ is defined as follows.
For each pair of adjacent vertices $u$ and $v$ in $F$ and for each $b=0,1$,
we connect every vertex in $X^b_{u}(v)$ by an edge to every vertex in $X^b_{v}(u)$. 
In other terms, two vertices $x\in Q(v)$ and $y\in Q(u)$ are adjacent if $x_u=y_v$.

For two disjoint sets of vertices $X$ and $X'$ in a graph $A$,
we write $A[X,X']$ to denote the bipartite graph with vertex classes $X$ and $X'$
and all edges from $E(A)$ between $X$ and~$X'$. Thus, $A[X^b_{u}(v),X^b_{v}(u)]\cong K_{2^{k-2},2^{k-2}}$
is a complete bipartite graph with both vertex classes of size $2^{k-2}$,
und $A[Q(v),Q(u)]$ is the vertex-disjoint union of two such graphs.
This completes description of the graph $A$. Note that every vertex in $A$
has degree $k2^{k-2}$.
The subgraph $A[Q(v),Q(u)]$ for any pair of adjacent $v$ and $u$ will be referred to
as an \emph{interspace} of~$A$.
Figure~\ref{fig:CFI} shows a cell of $A$ along with three incident interspaces in the case of $k=3$.

\begin{figure}
\centering
\begin{tikzpicture}[
  every node/.style={circle,draw,black,
  inner sep=2pt,fill=black},
  elabel/.style={black,draw=none,fill=none,rectangle},
  every edge/.append style={every node/.append style={elabel}},
  lab/.style={draw=none,fill=none,inner sep=0pt,rectangle},
  ri/.style={label position=right},
  le/.style={label position=left},
  line width=0.3pt,
]
  \matrixgraph[name=m1,nolabel]
    {&[2mm]&[3mm]&[2mm]&[2mm]&[9mm]&[9mm]&[9mm]
    &[2mm]&[2mm]&[3mm]&[2mm]\\
    &&&&    y_1&y_2&y_3&y_4        \\[12mm]
    &&&&    x_1&x_2&x_3&x_4        \\[3mm]
     z_4[le] &&&&&&  &&&&& u_4[ri]\\[2mm]
    & z_3[le] &&&& & &&&& u_3[ri]\\[3mm]
    && z_2[le] &&&&  &&& u_2[ri]\\[2mm]
    &&& z_1[le] && & && u_1[ri]\\
  }{
    {z_1,z_2} -- {x_1,x_4};
    {z_3,z_4} -- {x_2,x_3};
    {u_1,u_2} -- {x_1,x_2};
    {u_3,u_4} -- {x_3,x_4};
    {y_1,y_3} -- {x_1,x_3};
    {y_2,y_4} -- {x_2,x_4};
  };
  \colclass[0,fill=gray!30]{y_1,y_2,y_3,y_4}
  \colclass[0,fill=gray!30]{x_1,x_2,x_3,x_4}
  \colclass[-46,fill=gray!30]{z_1,z_2,z_3,z_4}
  \colclass[46,fill=gray!30]{u_1,u_2,u_3,u_4}
\end{tikzpicture}
\caption{The regularized CFI gadget for $k=3$.}
\label{fig:CFI}
\end{figure}
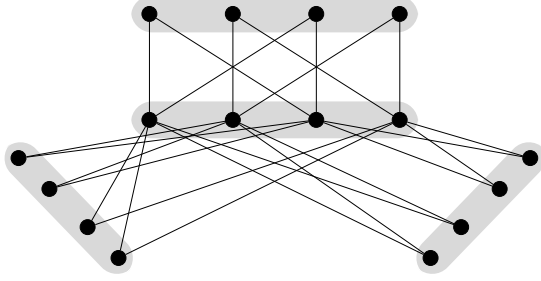

The adjacency relation of $A$ depends on the labeling of slice partitions for each $Q(v)$,
but the choice of a particular labeling is immaterial due to the interchangeability
of slice partitions in $Q$. Even when a labeling is fixed, an interspace $A[Q(v),Q(u)]$
is not determined unambiguously due to the automorphisms of $\cX$ in $\Lambda_Q$.
Specifically, a \emph{twist} of $A[Q(v),Q(u)]$ consists in reconnecting every vertex in 
$X^b_{u}(v)$ by an edge to every vertex in $X^{1-b}_{v}(u)$ for each $b=0,1$.
Note that a twisted interspace remains isomorphic to $2K_{2^{k-2},2^{k-2}}$.
If we repeat the construction with a cell $Q(v)$ permuted according to
$\lambda_s$ in $\Lambda_Q$, this will result in the modification of $A$
where the interspaces $A[Q(v),Q(u)]$ are twisted for all those $u\in N(v)$ with $s_u=1$.

Fix a graph $A$ constructed as described above.
For a set $S\subseteq E(F)$,
let $A^S$ denote the graph obtained from $A$ by twisting the subgraph $A[Q(v),Q(u)]$
for all edges $vu\in S$. 
For the original CFI construction, where each cell has its own color that must 
be preserved by any isomorphism, \cite[Lemma 6.2]{CaiFI92} says that
\begin{equation}
  \label{eq:AAS}
A\cong A^S\text{ if and only if }|S|\text{ is even.}
\end{equation}
In our colorless version,
$A$ can have automorphisms that do not map cells onto cells; see Remark \ref{rem:cond-on-F}.
This makes establishing the equivalence \refeq{AAS}
in the `only if' direction a somewhat subtle issue. We compensate the loss of rigidity
caused by removal of vertex colors by imposing a restriction on template graphs~$F$.

\begin{definition}
For two distinct vertices $u$ and $v$ of a graph $G$, let $\nu(u,v)=|N(u)\cap N(v)|$
denote the number of common neighbors of $u$ and $v$. The maximum possible value
of $\nu(u,v)$ for $G$ will be denoted by~$\nu(G)$.  
\end{definition}

\begin{lemma}\label{lem:twist0}
Suppose that $F$ is $k$-regular and $\nu(F)<2k-4$. Let $R,S\subseteq E(F)$.
If $\alpha$ is an isomorphism from $A^R$ to $A^S$, then there is $\phi\in\aut(F)$
such that $\alpha(Q(v))=Q(\phi(v))$ for every $v\in V(F)$.
\end{lemma}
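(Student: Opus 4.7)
The plan is to reconstruct the cell partition $\{Q(v) : v\in V(F)\}$ directly from the abstract graph $A^R$ using only common-neighbor counts, so that any isomorphism to $A^S$ is forced to respect the partition. Once cells are identified as a graph invariant, the induced map on $V(F)$ will be an $F$-automorphism because two cells are joined by at least one edge precisely when the corresponding vertices of $F$ are adjacent, a property unaffected by twisting.

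The first step is a short calculation of $|N(x)\cap N(y)|$ in two regimes. If $x,y$ lie in the same cell $Q(v)$ at Hamming distance $d\ge 2$, then for each neighbor $u\in N_F(v)$ the coordinate condition defining neighbors in $Q(u)$ differs between $x$ and $y$ by a twist-independent bit, so common neighbors appear in $Q(u)$ exactly when $x_u = y_u$, giving $2^{k-2}(k-d)$ in total. If $x\in Q(v)$ and $y\in Q(w)$ with $v\ne w$, common neighbors must lie in some $Q(u)$ with $u\in N_F(v)\cap N_F(w)$, and inside such a $Q(u)$ adjacency to $x$ and to $y$ constrains two independent coordinates of the candidate $z$, once again independently of any twist, yielding $2^{k-3}|N_F(v)\cap N_F(w)|\le 2^{k-3}\nu(F)$. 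The hypothesis $\nu(F)<2k-4$ is exactly what makes the intra-cell maximum $2^{k-2}(k-2)$, attained at $d=2$, strictly exceed the inter-cell maximum.

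I then define $x\approx y$ iff $x\ne y$ and $|N(x)\cap N(y)|\ge 2^{k-2}(k-2)$. By the inequality above, every $\approx$-related pair lies in a single cell, while within each cell $Q(v)$ the relation $\approx$ contains all pairs at Hamming distance $2$. The Hamming-distance-$2$ graph on $Q$ is connected, since the weight-$2$ vectors generate the even-weight subgroup of $(\bZ_2)^k$. Hence the transitive closure of $\approx$ is precisely the partition of $V(A^R)$ into cells, and the identical recipe recovers the cell partition of $A^S$.

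Any isomorphism $\alpha:A^R\to A^S$ preserves common-neighbor counts, hence preserves $\approx$ and its transitive closure, hence sends cells to cells; this defines a bijection $\phi:V(F)\to V(F)$ with $\alpha(Q(v))=Q(\phi(v))$. Finally, the interspace $A^R[Q(v),Q(u)]$ contains an edge iff $vu\in E(F)$ (twisting permutes edges but never removes them all), and analogously for $A^S$, so $\phi$ preserves adjacency in $F$ and therefore lies in $\aut(F)$. The main obstacle I expect is the first step, namely verifying that the common-neighbor statistics are genuinely invariant under twists of individual interspaces; once that is in hand, the threshold $\nu(F)<2k-4$ slots in cleanly as the quantitative separation between the two regimes, and the rest is bookkeeping.
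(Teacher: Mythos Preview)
Your proposal is correct and follows essentially the same route as the paper: compute common-neighbor counts in the two regimes, observe that the intra-cell maximum $(k-2)2^{k-2}$ strictly exceeds the inter-cell maximum $\nu(F)2^{k-3}$ under the hypothesis $\nu(F)<2k-4$, and recover the cell partition as the connected components of the resulting $\approx$-relation. The only cosmetic difference is that the paper defines $\approx$ via $\nu(x,y)=\nu(A^R)$ rather than your explicit threshold $\ge (k-2)2^{k-2}$, but since $\nu(A^R)=(k-2)2^{k-2}$ these coincide.
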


\begin{proof}
We first determine the value of $\nu(A^R)$ (which is the same for $A^S$ because $R$
is arbitrary). Suppose that two vertices $x$ and $y$ are in the same cell $Q(v)$. 
These vertices can have a common neighbor only in a cell $Q(w)$ for $w\in N(v)$.
A vertex $z\in Q(w)$ is adjacent to $x$ exactly when 
\begin{equation}
  \label{eq:adj-cond}
  \begin{array}{lcl}
z_v=x_w&\text{and}&\{v,w\}\notin R,\text{ or}\\
z_v\ne x_w&\text{and}&\{v,w\}\in R 
  \end{array}
\end{equation}
 (in the latter case, the interpace $A^R[Q(v),Q(w)]$ is twisted).
Along with the similar condition for adjacency to $y$, this implies the following.
If $x_w\ne y_w$, then $x$ and $y$ have no neighbors in $Q(w)$.
If $x_w=y_w$, then the number of such neighbors is $2^{k-2}$.
It readily follows that $\nu(x,y)=(k-H(x,y))\,2^{k-2}$, where $H(x,y)$ denotes the Hamming
distance between $k$-vectors. Note that the maximum value of $\nu(x,y)$ over $x,y\in Q(v)$
is equal to $(k-2)\,2^{k-2}$.

Suppose now that $x\in Q(v)$ and $y\in Q(u)$ for $u\ne v$. Such $x$ and $y$
can have a common neighbor only in a cell $Q(w)$ for $w\in N(v)\cap N(u)$.
Let $z\in Q(w)$. The adjacency condition for $z$ and $x$ is expressed by \refeq{adj-cond}.
The adjacency condition for $z$ and $y$ is stated similarly but
with respect to the coordinate $\{w,u\}$ instead of $\{w,v\}$. 
It follows that the number of common neighbors of $x$ and $y$ in $Q(w)$ is equal to $2^{k-3}$.
Therefore, 
$$
\nu(x,y)=\nu(u,v)\,2^{k-3}\le\nu(F)\,2^{k-3}<(k-2)2^{k-2},
$$
where the last inequality is due to the assumption on $\nu(F)$.
We conclude that $\nu(A^R)=(k-2)2^{k-2}$.

Given a graph $G$, we define an irreflexive symmetrix relation on $V(G)$
by setting $x\approx y$ if $\nu(x,y)=\nu(G)$. Let $G_\approx$ denote the
graph on $V(G)$ with adjacency relation $\approx$. The estimates above imply
that $(A^R)_\approx$ is a graph whose connected components are exactly the cells
$Q(v)$, $v\in V(F)$. This follows from the observation that, 
for every two distinct $x,y\in Q(v)$, the Hamming distance $H(x,y)$ is even
and, hence, there is a sequence $x=x_1,\ldots,x_s=y$ in $Q$ with $H(x_i,x_{i+1})=2$,
which is a path in $(A^R)_\approx$.

Note that an isomorphism $\alpha$ from $A^R$ to $A^S$ is also an
isomorphism from $(A^R)_\approx$ to $(A^S)_\approx$.
Since any isomorphism maps a connected component onto a connected component,
we conclude that, for every $v$ there is $v'$ such that $\alpha(Q(v))=Q(v')$.
The correspondence $v\mapsto v'$ is an automorphism $\phi$ of $F$ by the construction of~$A$.
\end{proof}

\begin{remark}\label{rem:cond-on-F}
The assumption $\nu(F)<2k-4$ in Lemma \ref{lem:twist0} is true
for every $k$-regular $F$ with $k>4$ just because $\nu(F)\le\Delta(F)=k$.
For $k=3$ this assumption cannot be dropped.
For example, the $4\times4$-rook's graph is obtainable by the CFI construction from 
the template graph $K_4$, the complete graph on 4 vertices. However, the cell partition
is not preserved by all automorphisms of this graph because its complement is
arc-transitive.
\end{remark}

Assume that the assumptions of Lemma \ref{lem:twist0} are satisfied and 
apply it in the case $R=S=\emptyset$. The correspondence $\alpha\mapsto\phi$ is clearly
a group homomorphism from $\aut(A)$ to $\aut(F)$, which we denote by $h$.
Denote the kernel of $h$ by $\aut^*(A)$. Thus, $\aut^*(A)$ consists of the
automorphisms of $A$ mapping each cell onto itself. Note that $h$ is surjective.
Indeed, given $\phi\in\aut(F)$, we can construct $\alpha\in\aut(A)$ such that $h(\alpha)=\phi$
just by obeying the equalities 
\begin{eqnarray}
  \label{eq:Q}
&&\alpha(Q(v))=Q(\phi(v))\text{ and, moreover,}\\
  \label{eq:Qbu} 
&&\alpha(Q^b_u(v))=Q^b_{\phi(u)}(\phi(v))\text{ for all }v\in V(F),\ u\in N(v),\text{ and }b=0,1
\end{eqnarray}
(using the automorphisms $\alpha_{ij}$ of the hypergraph $Q$ discussed in the beginning
of this subsection). It follows that $\aut(A)/\aut^*(A)\cong\aut(F)$.

\begin{lemma}\label{lem:autRaut}
Let $R,S,T\subseteq E(F)$.
If $\alpha$ is an isomorphism from $A^R$ to $A^S$ and $\phi$ is an automorphism of $F$
such that $\alpha(Q(v))=Q(\phi(v))$ for every $v\in V(F)$, then
$\alpha$ is an isomorphism from $A^{R\symdiff T}$ to $A^{S\symdiff \phi(T)}$.
\end{lemma}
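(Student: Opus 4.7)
The strategy is to reduce the claim to an edge-by-edge check on $E(F)$. Since every cell $Q(w)$ is an independent set in each of the four graphs under consideration, the edge set of each of them decomposes as a disjoint union of interspaces $[Q(v),Q(u)]$ indexed by $vu\in E(F)$. Combined with the hypothesis $\alpha(Q(v))=Q(\phi(v))$ for all $v$, this reduces the problem to showing that, for every $vu\in E(F)$, the restriction $\alpha_{vu}=\alpha|_{Q(v)\cup Q(u)}$ is an isomorphism from $A^{R\symdiff T}[Q(v),Q(u)]$ to $A^{S\symdiff\phi(T)}[Q(\phi(v)),Q(\phi(u))]$.

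I would then compare the interspaces pairwise. On the source side, $A^{R\symdiff T}[Q(v),Q(u)]$ coincides with $A^R[Q(v),Q(u)]$ when $vu\notin T$ and equals its twist when $vu\in T$; on the target side, $A^{S\symdiff\phi(T)}[Q(\phi(v)),Q(\phi(u))]$ coincides with $A^S[Q(\phi(v)),Q(\phi(u))]$ when $\phi(v)\phi(u)\notin\phi(T)$, i.e.\ when $vu\notin T$, and equals its twist otherwise. Hence for $vu\notin T$ the desired conclusion is immediate from the hypothesis that $\alpha$ is an isomorphism from $A^R$ to $A^S$. The remaining case $vu\in T$ amounts to a local lemma, which will be the heart of the argument: if $\alpha_{vu}$ is an interspace isomorphism, then $\alpha_{vu}$ remains an interspace isomorphism after the source and target interspaces are \emph{simultaneously} twisted.

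The main (and essentially only substantive) obstacle is this local lemma. I would establish it by writing the adjacency condition in an interspace of type $\epsilon\in\{0,1\}$ (with $\epsilon=1$ meaning twisted) as $x_u\oplus y_v=\epsilon$ for $x\in Q(v)$, $y\in Q(u)$. The isomorphism condition for $\alpha_{vu}$ between types $\epsilon$ and $\epsilon'$ then forces the quantities $(\alpha(x))_{\phi(u)}\oplus x_u$ and $(\alpha(y))_{\phi(v)}\oplus y_v$ to be constants $\beta_v,\beta_u\in\{0,1\}$ satisfying $\beta_v\oplus\beta_u=\epsilon\oplus\epsilon'$. Since simultaneously twisting both interspaces replaces $(\epsilon,\epsilon')$ by $(\epsilon\oplus 1,\epsilon'\oplus 1)$ and therefore preserves $\epsilon\oplus\epsilon'$, the same $\beta_v,\beta_u$ continue to witness the isomorphism, completing the argument.
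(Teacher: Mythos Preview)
Your proof is correct and follows the same decomposition and case analysis as the paper: reduce to interspaces indexed by $vu\in E(F)$, note that $vu\in T$ iff $\phi(v)\phi(u)\in\phi(T)$, and treat the two cases. The paper handles the case $vu\in T$ in one line, simply observing that twisting an interspace is passing to its bipartite complement on the fixed vertex classes $Q(v),Q(u)$; since $\alpha$ respects these classes, any bipartite isomorphism remains one after taking bipartite complements on both sides. Your coordinate computation with the constants $\beta_v,\beta_u$ is correct but more than is needed for what you call ``the heart of the argument.''
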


\begin{proof}
It suffices to prove that, for two vertices $v$ and $u$ adjacent in $F$,
$\alpha$ induces an isomorphism from the interspace $A^{R\symdiff T}[Q(v),Q(u)]$
to the interspace $A^{S\symdiff \phi(T)}[Q(\phi(v)),\allowbreak Q(\phi(u))]$.
By assumption, $\alpha$ is an isomorphism from $A^R[Q(v),Q(u)]$
to $A^S[Q(\phi(v)),\allowbreak Q(\phi(u))]$. If $vu\notin T$, we are done just because
$A^{R\symdiff T}[Q(v),Q(u)]=A^R[Q(v),Q(u)]$ and 
$A^{S\symdiff \phi(T)}[Q(\phi(v)),Q(\phi(u))]=A^S[Q(\phi(v)),Q(\phi(u))]$.
If $vu\in T$, then both the interspaces are twisted and $\alpha$ stays
an isomorphism between them.
\end{proof}

\begin{lemma}\label{lem:twist1}
Suppose that $F$ is a $k$-regular graph with $\nu(F)<2k-4$.
Let $R,S\subseteq E(F)$.
If $A^R\cong A^S$, then $|R\symdiff S|$ is even.
\end{lemma}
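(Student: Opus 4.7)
The plan is to first reduce to the statement ``$A\cong A^T$ implies $|T|$ even,'' and then to analyze any such isomorphism cell by cell, exploiting the abelian group $\Lambda_Q$.

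For the reduction, given an isomorphism $\alpha\colon A^R\to A^S$, Lemma \ref{lem:twist0} supplies $\phi\in\aut(F)$ with $\alpha(Q(v))=Q(\phi(v))$ for all $v$. Applying Lemma \ref{lem:autRaut} with ``$T$'' there instantiated as $R$, I obtain an isomorphism $A=A^{R\symdiff R}\to A^{S\symdiff\phi(R)}$. Since $|S\symdiff\phi(R)|\equiv|S|+|\phi(R)|=|S|+|R|\equiv|R\symdiff S|\pmod 2$, it suffices to show $|T|$ is even whenever $A\cong A^T$.

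Next, given $\beta\colon A\to A^T$, apply Lemma \ref{lem:twist0} once more to find $\psi\in\aut(F)$ with $\beta(Q(v))=Q(\psi(v))$. Using the surjectivity of $h\colon\aut(A)\to\aut(F)$ witnessed by \refeq{Q}--\refeq{Qbu}, pick $\gamma\in\aut(A)$ with $\gamma(Q(v))=Q(\psi^{-1}(v))$ and replace $\beta$ by $\beta\circ\gamma$; we may therefore assume $\beta(Q(v))=Q(v)$ for every $v$. The crucial step is now to describe $\beta_v:=\beta|_{Q(v)}$. Since every interspace of $A$ and of $A^T$ is isomorphic to $2K_{2^{k-2},2^{k-2}}$ whose two connected components are exactly the bipartite pieces on $X^b_u(v)\cup X^b_v(u)$, $\beta_v$ must map the slice partition $\{X^0_u(v),X^1_u(v)\}$ to itself for each $u\in N(v)$. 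Hence $\beta_v\in\Lambda_{Q(v)}$, i.e.\ $\beta_v=\lambda_{s_v}$ for some $s_v\in Q$.

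Finally, comparing the adjacency rule in $A$ (namely $x_u=y_v$ for $x\in Q(v)$, $y\in Q(u)$) with the rule in $A^T$ (flipped precisely when $vu\in T$), one reads off the key equivalence
\[
vu\in T \iff (s_v)_u\oplus(s_u)_v=1.
\]
Summing modulo $2$ over the edges of $F$,
\[
|T|\equiv\sum_{vu\in E(F)}\bigl((s_v)_u+(s_u)_v\bigr)=\sum_{v\in V(F)}\sum_{u\in N(v)}(s_v)_u\pmod 2,
\]
and each inner sum vanishes modulo $2$ because $s_v\in Q$ has even weight by definition of $Q$. Hence $|T|$ is even, completing the proof.

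The main obstacle is the cell-wise step: proving that $\beta_v$ lies in the abelian subgroup $\Lambda_{Q(v)}$, rather than in some larger subgroup of $\aut(\cX)$. Everything else is formal once one knows that $\beta$ respects cells and that the ``twist bit'' of the interspace $A[Q(v),Q(u)]$ under $\beta$ coincides with $(s_v)_u\oplus(s_u)_v$; the even-weight condition on $Q$ is what converts a local sum-per-vertex analysis into the global parity statement on $|T|$.
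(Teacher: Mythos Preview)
Your proof is correct and follows the same skeleton as the paper's: reduce to a cell-preserving isomorphism and exploit that each cell-restriction lies in $\Lambda_{Q(v)}$, hence is a shift by an even-weight vector. The differences are in presentation. The paper composes $\alpha$ with a lift $\beta\in\aut(A)$ of $\phi$ to obtain a cell-preserving isomorphism $\gamma\colon A^{\phi(R)}\to A^S$, then writes $\gamma=\gamma_1\cdots\gamma_m$ and argues \emph{inductively}: each $\gamma_i\in\Lambda_{Q(v_i)}$ twists an even number of interspaces, so after all $m$ steps the symmetric difference has even size. Your route instead first invokes Lemma~\ref{lem:autRaut} to reduce to $A\cong A^T$, then derives the explicit edge formula $vu\in T\iff (s_v)_u\oplus(s_u)_v=1$ and sums over $E(F)$. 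This global double-counting is arguably cleaner than the paper's cell-by-cell induction, and it makes transparent \emph{why} the even-weight condition on $Q$ is exactly what is needed: the inner sum $\sum_{u\in N(v)}(s_v)_u$ is precisely the weight of $s_v$. Both arguments ultimately rest on the same observation---that a permutation of $Q$ preserving every slice partition is forced to be some $\lambda_s$ with $s\in Q$---which the paper also takes as ``readily implied'' without spelling out, so your handling of what you call the ``main obstacle'' matches the paper's level of detail.
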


\begin{proof}
Let $\alpha$ be an isomorphism from $A^R$ to $A^S$.
Let $\phi$ be an automorphism of $F$ induced by $\alpha$ according to Lemma \ref{lem:twist0}.
Taking into account the discussion before Lemma \ref{lem:autRaut}, fix
$\beta\in\aut(A)$ such that $h(\beta)=\phi$.
By Lemma \ref{lem:autRaut} applied for $R=S=\emptyset$ and $T=R$, 
the automorphism $\beta$ is an isomorphism from $A^R$ to $A^{\phi(R)}$.
The composition $\gamma=\alpha\beta^{-1}$ is, therefore, an isomorphism from $A^{\phi(R)}$ to $A^S$
taking each cell onto itself, i.e., $\gamma(Q(v))=Q(v)$ for every $v\in V(F)$.
Let $V(F)=\{v_1,\ldots,v_m\}$. We can represent $\gamma$ as a product
$\gamma=\gamma_1\gamma_2\ldots\gamma_m$ where $\gamma_i$ is the identity outside $Q(v_i)$.
Thus, $\gamma_i$ can be seen as a permutation of $Q(v_i)$.
The condition that $\gamma$ is an isomorphism from $A^{\phi(R)}$ to $A^S$ readily implies that
$\gamma_i$ preserves the slice partitions of $Q(v_i)$, i.e.,
$\gamma_i\in\Lambda_{Q(v_i)}$ for every $i\le m$.

For any $t$ from 1 to $m$, let $\bar\gamma_t=\gamma_1\ldots\gamma_t$.
Using the induction on $t$, we will show that $\bar\gamma_t$ is an isomorphism from $A^{\phi(R)}$
to $A^{S_t}$ for a set $S_t\subseteq E(F)$ such that $|\phi(R)\symdiff S_t|$ is even. 

In the base case $t=1$, we have $\bar\gamma_1=\gamma_1$, and the claim follows from
the fact that $\gamma_1$, like any permutation in $\Lambda_Q$, twists
an even number of interspaces. It suffices to notice that, when $\gamma_1$
transforms $A^{\phi(R)}$ into $A^{S_1}$, then it twists exactly the interspaces $A^{\phi(R)}[Q(u),Q(v)]$
corresponding to $\{u,v\}\in\phi(R)\symdiff S_1$. Suppose now that $\bar\gamma_{t-1}$ is an isomorphism from $A^{\phi(R)}$
to $A^{S_{t-1}}$ with $|\phi(R)\symdiff S_{t-1}|$ even. Let $S'\subseteq E(F)$ correspond to the set
of the interspaces twisted by $\gamma_t$. Note that $\gamma_t$ transforms $A^{S_{t-1}}$ into
$A^{S_{t-1}\symdiff S'}$ and, hence, $\bar\gamma_t$ transforms $A^{\phi(R)}$ into $A^{S_t}$ where $S_t=S_{t-1}\symdiff S'$.
Since both $\phi(R)\symdiff S_{t-1}$ and $S'$ are of even size, their symmetric difference 
$(\phi(R)\symdiff S_{t-1})\symdiff S'=\phi(R)\symdiff S_t$ is also of even size.

For $t=m$ we have $S_t=S$ and, therefore, $|\phi(R)\symdiff S|=|\phi(R)|+|S|-2|\phi(R)\cap S|$ is even.
Since $|\phi(R)|=|R|$, we conclude that $|R\symdiff S|=|R|+|S|-2|R\cap S|$ is even too.
\end{proof}

\begin{lemma}\label{lem:twist2}
Suppose that $F$ is connected. Let $R,S\subseteq E(F)$.
If $|R\symdiff S|$ is even, then
$A^R\cong A^S$ and, moreover, there exists an isomorphism
from $A^R$ to $A^S$ mapping every cell $Q(v)$ onto itself.
\end{lemma}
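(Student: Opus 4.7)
The plan is to realize the isomorphism from $A^R$ to $A^S$ as a composition of cell-wise shift permutations $\lambda_{s_v}\in\Lambda_{Q(v)}$, one at each vertex $v$ of $F$. My first step is a short local computation: applying $\lambda_{s_v}$ inside $Q(v)$ (and the identity elsewhere) is an isomorphism from any $A^X$ onto $A^{X\,\symdiff\, T_v}$, where $T_v=\Set{\{v,u\}\in E(F)}{(s_v)_u=1}$. The two cases $(s_v)_u=0$ (the interspace $A[Q(v),Q(u)]$ is preserved) and $(s_v)_u=1$ (it is twisted) follow directly from the definitions of $\lambda_{s_v}$ and of a twist. Since $s_v\in Q(v)$ has even weight, $|T_v|$ is automatically even. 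The permutations $\lambda_{s_v}$ for distinct $v$ act on disjoint cells and thus commute, so composing them over all vertices yields a cell-preserving isomorphism from $A^R$ to $A^{R'}$, where $R'$ is the symmetric difference of $R$ with all the $T_v$.

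Setting $T:=R\,\symdiff\, S$, the task reduces to finding $(s_v)_{v\in V(F)}$ with $s_v\in Q(v)$ whose associated sets $T_v$ symmetric-difference to exactly $T$. I encode this as a linear map $\Phi\colon\bigoplus_v Q(v)\to \bF_2^{E(F)}$ over $\bF_2$ by $\Phi((s_v))_{\{u,v\}}=(s_v)_u\oplus (s_u)_v$. Summing $\Phi((s_v))$ over $E(F)$ gives $\sum_v|s_v|\equiv 0\pmod 2$, so $\operatorname{Im}(\Phi)$ is contained in the even-weight subspace $E_0\subseteq\bF_2^{E(F)}$. By hypothesis $T\in E_0$, so it suffices to prove the reverse inclusion $E_0\subseteq\operatorname{Im}(\Phi)$.

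This surjectivity is the main step and the only place where connectivity of $F$ enters. Since $E_0$ is spanned by indicator vectors $\mathbf{1}_{\{e_1,e_2\}}$ of two-element edge subsets, it is enough to place each such vector into the image. Given $e_1,e_2\in E(F)$ with $e_1\ne e_2$, connectivity of $F$ furnishes a walk $e_1=f_0,f_1,\ldots,f_m=e_2$ in the line graph $L(F)$, in which consecutive edges $f_{i-1}$ and $f_i$ share a vertex $w_i\in V(F)$; I may take this walk to be a path, so no edge is repeated. At each $w_i$, place the weight-$2$ vector in $Q(w_i)$ supported on the two endpoints of $f_{i-1}$ and $f_i$ other than $w_i$; if the same vertex of $F$ serves as $w_i$ for several $i$, combine the contributions by $\bF_2$-addition (evenness of weight is preserved). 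A telescoping computation then shows that $\Phi$ of this assignment equals $\mathbf{1}_{f_0}+\mathbf{1}_{f_m}=\mathbf{1}_{\{e_1,e_2\}}$, because every internal edge $f_1,\ldots,f_{m-1}$ is contributed by exactly two of the $w_i$ and so cancels in $\bF_2$.

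The main obstacle is the bookkeeping in this telescoping step; beyond that the argument is pure linear algebra. The trivial cases $|E(F)|\le 1$ give $R=S$ and require no argument, so we may always assume $L(F)$ is connected and non-empty.
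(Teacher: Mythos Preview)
Your proof is correct and follows essentially the same route as the paper. Both arguments hinge on the same mechanism: a shift $\lambda_s$ at a single cell $Q(w)$ twists exactly the even set of interspaces indexed by the support of $s$, and composing such shifts along a path of edges telescopes to twist just the two endpoint edges. The paper packages this as an induction on $|R\symdiff S|/2$, with the path argument (their permutations $\lambda_{vuw}$ composed along a path in $F$) serving as the base case $|R\symdiff S|=2$; you recast the same content as surjectivity of the $\bF_2$-linear map $\Phi$ onto the even-weight subspace $E_0$, verified on the spanning set $\{\mathbf{1}_{\{e_1,e_2\}}\}$. These are equivalent organizational choices rather than different ideas, though your linear-algebraic framing makes the role of parity and of connectivity a bit more transparent.
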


\begin{proof}
This lemma is proved similarly to \cite[Lemma 6.2]{CaiFI92}.

Note that any isomorphism from $A^R$ to $A^S$ has to twist exactly $|R\symdiff S|$ interspaces.
Suppose that $|R\symdiff S|=2n$ and use the induction on $n$.
Consider the base case $n=1$.
Assume first that the two edges in $R\symdiff S$ are adjacent, say, those are $vu$ and $uw$.
Let $\lambda_{vuw}$ denote the permutation in $\lambda_s\in\Lambda_{Q(u)}$
for the shift vector $s$ with $s_v=s_w=1$ and $s_z=0$ for all other $z\in N(u)$.
We extend $\lambda_{vuw}$ to the whole vertex set of $A^R$ by identity.
This permutation flips each of the slice partitions $\Set{Q^b_v(u)}_{b=0,1}$
and $\Set{Q^b_w(u)}_{b=0,1}$ and fixes the other slice partitions of $Q(u)$.
Thus, $\lambda_{vuw}$ twists each of the interspaces
$A^R[Q(u),Q(v)]$ and $A^R[Q(u),Q(w)]$ and does not change anything else in the graph.
We conclude that $\lambda_{vuw}$ is an isomorphism from $A^R$ to~$A^S$.

If $R\symdiff S$ consists of two non-adjacent edges $vv'$ and $ww'$,
consider a path connecting these edges, say, $vu_1\ldots u_kw$.
The permutation 
$$
\lambda_{v'vu_1}\lambda_{vu_1u_2}\lambda_{u_1u_2u_3}\ldots\lambda_{u_{k-1}u_kw}\lambda_{u_kww'}
$$
is an isomorphism from $A^R$ to $A^S$ because it twists both interspaces
$A^R[Q(v'),Q(v)]$ and $A^R[Q(w),Q(w')]$ and changes nothing else
(each of the intermediate iterspaces $A^R[Q(v),Q(u_1)]$, $A^R[Q(u_1),Q(u_2)]$ etc.\
is twisted twice).

Suppose now that $n\ge2$. Choose two edges $e_1$ and $e_2$ in $R\symdiff S$
and set $T=S\symdiff\{e_1,e_2\}$. Since $R\symdiff T=(R\symdiff S)\setminus\{e_1,e_2\}$
is of size $2(n-1)$, the induction assumption gives us a cell-preserving
isomorphism $\beta$ from $A^R$ to $A^T$. Note that $T\symdiff S=\{e_1,e_2\}$
is of size 2. According to the base case, there exists a cell-preserving isomorphism $\alpha$ 
from $A^T$ to $A^S$. The composition $\alpha\beta$ is a cell-preserving isomorphism
from $A^R$ to~$A^S$.
\end{proof}

From now on we impose the following conditions on template graphs.

\begin{assumption}\label{ass:}
A template graph $F$ is $k$-regular with $k\ge3$. 
Moreover, $F$ is connected and $\nu(F)<2k-4$.
\end{assumption}

Let $B=A^{\{e\}}$ for an edge $e$ of $F$. It follows from Lemma \ref{lem:twist2}
that the isomorphism type of $B$ does not depend on the choice of $e$.
Lemma \ref{lem:twist1} implies that $A$ and $B$ are not isomorphic.

Given $X\subset V(F)$, let $F\setminus X$ denote the graph obtained
from $F$ by removal of all vertices in $X$. We call $X$
a \emph{separator} of $F$ if every connected component of the graph
$F\setminus X$ has at most $v(F)/2$ vertices.
The number of vertices in a separator is called its {\em size}.
We denote the minimum size of a separator of $F$ by $s(F)$.
The following crucial fact is established in \cite[Theorem 6.4]{CaiFI92}
and is all the more true in the colorless version of the construction.

\begin{lemma}\label{lem:sep}
The graphs $A$ and $B$ are indistinguishable by \kwl for all $k<s(F)$.
\end{lemma}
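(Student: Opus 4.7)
The plan is to reprise the Cai--Fürer--Immerman argument in the colorless, regular setting using the bijective pebble-game characterization of \kwl equivalence: \kwl fails to distinguish two graphs iff Duplicator wins the bijective $(k{+}1)$-pebble Ehrenfeucht--Fraïssé counting game on them. I therefore aim to exhibit a winning strategy for Duplicator on $(A,B)$ under the hypothesis $k<s(F)$.

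Duplicator maintains the following invariant during play. Let $X\subseteq V(F)$ be the set of cells touched by the currently placed pebbles. The correspondence between the pebble budget and $s(F)$ ensures that $X$ never becomes a separator of $F$, so $F\setminus X$ has a connected component $C_X$ with $|C_X|>v(F)/2$. Duplicator keeps an odd-size twist set $S\subseteq E(F)$, all of whose edges lie inside $C_X$ (and hence are disjoint from cells in $X$), together with an isomorphism $\beta\colon B\to A^S$ provided by Lemma~\ref{lem:twist2} (applicable because $|S\symdiff\{e\}|$ is even). The partial map imposed by the pebbles between $A$ and $B$ is, after identification via $\beta$, the restriction to $\bigcup_{v\in X}Q(v)$ of a cell-preserving isomorphism $A\to A^S$. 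Initially $S=\{e\}$ and $X=\emptyset$, so the invariant holds vacuously.

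When Spoiler replaces a pebble, the cell set becomes $X'$, still within the separator threshold, so $F\setminus X'$ has a large component $C_{X'}$. If $S$ does not already lie in $C_{X'}$, Duplicator picks a pair of edges inside $C_{X'}$ and transforms $S$ into it by a sequence of the elementary cell-preserving shifts $\lambda_{vuw}\in\Lambda_{Q(u)}$ that appear in the proof of Lemma~\ref{lem:twist2}. Each such shift changes $S$ by the symmetric difference of two adjacent edges of $F$ and acts inside a single cell $Q(u)$; routing the sequence along a path in $F\setminus X'$ keeps every already-pebbled cell fixed and preserves $|S|\bmod 2$. After this update, Duplicator proposes the bijection $V(A)\to V(B)$ obtained by composing the updated $\beta^{-1}$ with a cell-preserving isomorphism $A\to A^{S'}$ from Lemma~\ref{lem:twist2}. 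Whichever vertex Spoiler then pebbles, the resulting partial map is a partial isomorphism, since no edge of $S'$ meets $X'$ and the induced subgraphs of $A$ and $B$ on the pebbled cells therefore coincide.

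\textbf{Main obstacle.} The main nontrivial step is the ``twist routing'' inside $F\setminus X'$: one must verify that an even-parity chain of $\lambda_{vuw}$-moves can transport the twist from its present location to an arbitrary chosen pair of edges in $C_{X'}$ while touching no pebbled cell. This reduces to connectivity of $C_{X'}$ (and to the old twist edges and target edges lying in this common component), which is exactly what the separator hypothesis $k<s(F)$ buys. Once this routing is established, the remainder reproduces \cite[Theorem~6.4]{CaiFI92}: the colorless adaptation is free because Lemma~\ref{lem:twist0} forces any isomorphism between $A^R$'s to be cell-preserving up to an automorphism of $F$, while Lemma~\ref{lem:twist1} supplies the parity obstruction. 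Thus no cell-coloring rigidity is needed to carry the original argument through.
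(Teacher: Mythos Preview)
The paper does not actually prove this lemma. It cites \cite[Theorem~6.4]{CaiFI92} for the colored CFI graphs and observes that the statement ``is all the more true in the colorless version of the construction'': dropping the cell colors can only make \kwl weaker, so a winning Duplicator strategy in the colored bijective $(k{+}1)$-pebble game remains winning in the uncolored one. Your reconstruction of the pebble-game argument is therefore more than is required, though its core (route the odd twist inside the large component of $F\setminus X$, which exists because $|X|\le k<s(F)$) is the standard CFI mechanism and is sound.

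Your final paragraph, however, is confused. Lemmas~\ref{lem:twist0} and~\ref{lem:twist1} establish the \emph{non-isomorphism} $A\not\cong B$ in the absence of colors; they play no role in Lemma~\ref{lem:sep}, which is purely about \kwl-indistinguishability. Duplicator never uses a parity obstruction or cell rigidity --- she only needs Lemma~\ref{lem:twist2}, to route the twist via cell-preserving isomorphisms $A^R\to A^{R'}$ with $|R\symdiff R'|$ even. Relatedly, you twice invoke ``a cell-preserving isomorphism $A\to A^{S}$'' with $|S|$ odd and attribute it to Lemma~\ref{lem:twist2}; no such isomorphism exists. What you actually want is the identity bijection on the common vertex set $V(A)=V(A^{S})$, which restricts to a graph isomorphism on the cells disjoint from the endpoints of edges in $S$; composed with $\beta^{-1}$, that is the bijection Duplicator offers.
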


\subsection{Proof of Theorem \ref{thm:vt}}\label{ss:proof}

We begin with an elementary property of vertex-transitive graphs.

\begin{lemma}\label{lem:vt-elem}
Every 3-path $vuw$ in a vertex-transitive graph $F$ extends to a cycle.
\end{lemma}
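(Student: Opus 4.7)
The plan is to show that the connected component $C$ of $F$ containing the vertices $v,u,w$ is $2$-connected; once this is established, the conclusion follows immediately, since $u$ is not a cut vertex of $C$ and so there exists a $v$--$w$ path $P$ in $C\setminus\{u\}$, whose concatenation with the edges $vu$ and $uw$ is a cycle extending the $3$-path.

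First, I would reduce to the connected, vertex-transitive setting. Any automorphism of $F$ permutes connected components, so for any vertex $x\in C$ an automorphism $\phi\in\aut(F)$ sending $v$ to $x$ must map $C$ onto $C$. Hence $C$ is itself vertex-transitive, and since the $3$-path $vuw$ has three distinct vertices we have $|V(C)|\ge 3$.

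Next, I would argue that $C$ is $2$-connected by using the standard fact that in any connected graph a leaf of a spanning tree is not a cut vertex; in particular $C$ has at least one non-cut vertex. By vertex-transitivity of $C$, an automorphism of $C$ sends cut vertices to cut vertices, so the property of being a cut vertex is shared by all vertices or by none. Since some vertex is not a cut vertex, no vertex is, i.e., $C$ is $2$-connected. Consequently $C\setminus\{u\}$ is connected, a $v$--$w$ path in it exists, and the desired cycle is obtained as above.

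The only subtlety---very minor---is the descent of vertex-transitivity from $F$ to $C$, together with the step that upgrades "some vertex is non-cut" to "no vertex is a cut vertex" via the automorphism action; both are routine. The rest of the argument is essentially the classical fact that a connected vertex-transitive graph on at least three vertices is $2$-connected, which is precisely what is needed.
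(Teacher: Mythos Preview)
Your proposal is correct and follows essentially the same route as the paper: pass to the connected component, observe it is vertex-transitive, argue it is $2$-connected because cut-vertexhood would have to be shared by all vertices, and then take a $v$--$w$ path avoiding $u$. Your version is in fact slightly more explicit than the paper's, which simply asserts that a connected graph cannot have every vertex be a cut vertex, whereas you justify this via the spanning-tree-leaf observation.
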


\begin{proof}
Let $F'$ be the connected component of $F$ containing the path $vuw$.
If $F'$ has a cut vertex, then all vertices of $F'$ must be cut vertices
because $F'$ is vertex-transitive. This is impossible because $F'$ is connected.
Thus, $F'$ is actually 2-connected. Therefore, there is a path
between $v$ and $w$ avoiding $u$, which implies the lemma.  
\end{proof}

The colorless CFI construction has a useful property.

\begin{lemma}\label{lem:vt}
If $F$ is a connected vertex-transitive graph, then both $A$ and $B$ are vertex-transitive.
\end{lemma}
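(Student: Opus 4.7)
The plan is to prove the stronger statement that $A^S$ is vertex-transitive for every $S\subseteq E(F)$, which will cover both $A=A^\emptyset$ and $B=A^{\{e\}}$. The strategy has two ingredients: cell-preserving automorphisms of $A^S$ that act transitively within each cell, and cell-moving automorphisms that realize the full action of $\aut(F)$ on the cells. Since $F$ is vertex-transitive, composing these two families will send any vertex of $A^S$ to any other.

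For the cell-preserving part I will use the cycle-twist products from the proof of Lemma \ref{lem:twist2}. Given a cycle $C=v_0v_1\ldots v_{l-1}v_0$ in $F$, the composition $\lambda_{v_{l-1}v_0v_1}\lambda_{v_0v_1v_2}\cdots\lambda_{v_{l-2}v_{l-1}v_0}$ twists each edge of $C$ exactly twice, so its net effect on the edge set is trivial and it is an automorphism of $A^S$ for every $S$. By construction, this automorphism restricts to the shift $\lambda_{e_{v_{i-1}}+e_{v_{i+1}}}$ on the cell $Q(v_i)$ and to the identity on cells outside~$C$. Lemma \ref{lem:vt-elem} guarantees that every 3-path $uvu'$ in $F$ lies on a cycle, so for every pair of distinct $u,u'\in N(v)$ I obtain a cell-preserving automorphism of $A^S$ that acts on $Q(v)$ by $\lambda_{e_u+e_{u'}}$. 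The weight-two vectors $e_u+e_{u'}$ span the even-weight subgroup $Q\subset(\bZ_2)^k$, so these automorphisms already realize every element of $\Lambda_{Q(v)}$ on $Q(v)$, and this group acts regularly and in particular transitively on $Q(v)$.

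For the cell-moving part I will lift an arbitrary $\phi\in\aut(F)$ to a bijection $\alpha_\phi\colon V(A)\to V(A)$ obeying \refeq{Q} and \refeq{Qbu}; by the adjacency rule, $\alpha_\phi$ is an isomorphism from $A^S$ onto $A^{\phi(S)}$. Because $|\phi(S)|=|S|$, the symmetric difference $\phi(S)\symdiff S$ has even cardinality, and since $F$ is connected Lemma \ref{lem:twist2} supplies a cell-preserving isomorphism $\beta\colon A^{\phi(S)}\to A^S$. Then $\beta\alpha_\phi$ is an automorphism of $A^S$ whose induced action on the cell partition coincides with $\phi$, so the transitive action of $\aut(F)$ on $V(F)$ lifts to a transitive action of a subgroup of $\aut(A^S)$ on the set of cells of $A^S$. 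Composing such a cell-moving automorphism with a cell-preserving one from the previous paragraph yields vertex-transitivity of $A^S$; specializing to $S=\emptyset$ and $S=\{e\}$ then gives the lemma.

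The main technical obstacle is confirming that the cycle-twist product really is an automorphism of $A^S$ and not merely of $A$: one has to verify that, after applying the $\lambda_{v_{i-1}v_iv_{i+1}}$ in order, each edge of $C$ ends up twisted twice with respect to the current twisting status, so that the edge set of $A^S$ is restored. This is a direct symmetric-difference bookkeeping in the style of the inductive step of Lemma \ref{lem:twist2}. Everything else---the parity of $|\phi(S)\symdiff S|$, the availability of cycles through chosen 3-paths, and the fact that the weight-two shifts generate $\Lambda_{Q(v)}$---follows at once from results or assumptions already in place.
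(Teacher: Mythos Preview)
Your proposal is correct and follows essentially the same two-part strategy as the paper: Claim~\ref{cl:vt-a} there is your cell-moving step (lift $\phi$ to $\alpha_\phi$, observe the image is $A^{\phi(S)}$, use parity of $|\phi(S)\symdiff S|$ and Lemma~\ref{lem:twist2} to correct), and Claim~\ref{cl:vt-b} is your cell-preserving step (build cycle-twist products along cycles supplied by Lemma~\ref{lem:vt-elem} to realize the weight-two shifts, which generate $\Lambda_{Q(v)}$). Your parity argument $|\phi(S)|=|S|\Rightarrow|\phi(S)\symdiff S|$ even is in fact a touch more direct than the paper's appeal to the proof of Lemma~\ref{lem:twist1}, but the content is identical.
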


\begin{proof}
We prove that, more generally, any twisted version $A^R$ of $A$ is vertex-transitive.

\begin{claim}\label{cl:vt-a}
For every two vertices $w$ and $w'$ of $F$, there is an automorphism of $A^R$
taking the cell $Q(w)$ onto the cell~$Q(w')$.
\end{claim}

\begin{subproof}
Let $\phi$ be an automorphism of $F$ such that $\phi(w)=w'$.
Fix a permutation $\alpha$ of $V(A)$ satisfying Conditions \refeq{Q}--\refeq{Qbu}.
Let $A^S$ be the image of $A^R$ under $\alpha$.
Since $\alpha$ is an isomorphism from $A^R$ to $A^S$, Lemma \ref{lem:twist1}
implies that $|R\symdiff S|$ is even (we do not need the assumptions on $F$
made in Lemma \ref{lem:twist1} because $\alpha$ maps cells to cells by construction). 
Therefore, Lemma \ref{lem:twist2} applies,
and we have an isomorphism $\beta$ from $A^S$ to $A^R$ mapping each cell onto itself.
The composition $\beta\alpha$ is an automorphism of $A^R$ mapping each cell $Q(z)$
onto the cell $Q(\phi(z))$, in particular, $Q(w)$ onto $Q(w')$.
\end{subproof}

\begin{claim}\label{cl:vt-b}
For every $u\in V(F)$ and every two vertices $x,y\in Q(u)$, there is an automorphism of $A^R$
taking $x$ to $y$ (and vice versa).
\end{claim}

\begin{subproof}
Identifying $Q(w)$ with a subgroup of $(\bZ_2)^k$, let $s=x\oplus y$.
For the permutation $\lambda_s$ in $\Lambda_{Q(u)}$ we have $\lambda_s(x)=y$.
This permutation twists an even number of interspaces incident to the cell $Q(u)$.
Let $A^R[Q(u),Q(v)]$ and $A^R[Q(u),Q(w)]$ be two twisted interspaces.
By Lemma \ref{lem:vt-elem}, the path $vuw$ extends in $F$ to a cycle $vuwz_1\ldots z_kv$.
Using the notation introduced in the proof of Lemma \ref{lem:twist2}, we form the permutation
\begin{equation}
  \label{eq:along-cycle}
\lambda_{vuw}\lambda_{uwz_1}\lambda_{wz_1z_2}\lambda_{z_1z_2z_3}\ldots\lambda_{z_kvu},  
\end{equation}
which is an automorphism of $A^R$.
Split the set of the interspaces twisted by $\lambda_s$ into pairs and
consider the product of automorphisms as in \refeq{along-cycle} for each pair.
The restriction of this automorphism of $A^R$ to the cell $Q(u)$ coincides with $\lambda_s$
and, therefore, it takes $x$ to~$y$.
\end{subproof}

Combining Claims \ref{cl:vt-a} and \ref{cl:vt-b}, we obtain the lemma.
\end{proof}

We now define a graph $G$ as the vertex-disjoint union of two copies
of $A$, and $H$ as the vertex-disjoint union of $A$ and $B$.
Since $A$ is vertex-transitive, $G$ is vertex-transitive as well.
On the other hand, $H$ is not vertex-transitive because 
$A$ and $B$ are connected and non-isomorphic.

The graphs $A$ and $B$ are indistinguishable by \kwl for all $k<s(F)$
by Lemma \ref{lem:sep}, and two copies of the graph $A$ are indistinguishable
by \kwl for every $k$ just because they are isomorphic. 
It follows that $G$ and $H$ are indistinguishable by \kwl for all $k<s(F)$.
This implication can directly be seen from the game characterization
of the \kwl-equivalence relation in \cite{CaiFI92}.
To complete the proof of Theorem \ref{thm:vt}, we therefore need
a family of vertex-transitive graphs $F$ satisfying Assumption \ref{ass:}
and having sufficiently large value of the parameter~$s(F)$.

The \emph{vertex expansion} of a graph $F$ is defined as
$$
h_\mathrm{out}(F)=\min_{0<|S|\le v(F)/2}\frac{|\partial_\mathrm{out}(S)|}{|S|},
$$
where $S\subset V(F)$ and $\partial_\mathrm{out}(S)$ denotes the set of 
vertices of $F$ outside $S$ with at least one neighbor in $S$.
We use the estimate
\begin{equation}
  \label{eq:pvv}
s(F)\ge\frac{h_\mathrm{out}(F)}{3+h_\mathrm{out}(F)}\,v(F)  
\end{equation}
(see Lemma 7.10 in the preliminary version of \cite{PikhurkoVV06}).
Babai \cite{Babai91} estimated the vertex expansion of a connected vertex-transitive graph $F$ 
from below in terms of the diameter of $F$, which we denote by $\diam(F)$. Specifically,
$$
h_\mathrm{out}(F)\ge\frac1{2\diam(F)}.
$$
By \refeq{pvv}, this yields
\begin{equation}
  \label{eq:sD}
s(F)\ge\frac{v(F)}{6\diam(F)+1}.
\end{equation}

To obtain Part 1 of the theorem, consider $F=\cay(D_{2q},\{a,ab,ab^r\})$
for positive integer parameters $q$ and $r$.
Here $D_{2q}$ is the dihedral group with generators $a$ and $b$, where $a$
corresponds to a reflection and $b$ corresponds to a rotation by $2\pi/q$.
Note that, like $a$, the elements $ab$ and $ab^r$ correspond to reflections
and, hence, are involutory. Being a Cayley graph, $F$ is vertex-transitive.

\begin{lemma}\label{lem:diam}
Let $F=\cay(D_{2q},\{a,ab,ab^r\})$.
  \begin{enumerate}[\bf 1.]
  \item 
$\diam(F)\le\frac{2q}r+r+1$.
\item 
If $r<q/2$, then $\nu(F)=1$.
  \end{enumerate}
\end{lemma}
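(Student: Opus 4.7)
The plan is to reduce both parts to the multiplication table of the three involutions in the connection set $Z=\{a,ab,ab^r\}$. Using the dihedral relation $ba=ab^{-1}$, a direct computation gives $a^2=(ab)^2=(ab^r)^2=e$ together with
\begin{align*}
a\cdot ab &= b, & ab\cdot a &= b^{-1},\\
a\cdot ab^r &= b^r, & ab^r\cdot a &= b^{-r},\\
ab\cdot ab^r &= b^{r-1}, & ab^r\cdot ab &= b^{1-r}.
\end{align*}
In particular $\langle Z\rangle=D_{2q}$ (since $a$ and $b$ lie in $\langle Z\rangle$), so $F$ is connected, and every subsequent step will read off from this table.

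For Part~1, I would first note that $F$ is bipartite between the rotation subgroup $\langle b\rangle$ and its reflection coset $a\langle b\rangle$, because $Z$ consists of reflections. Hence every walk from $e$ alternates between rotations and reflections, and the table shows that each pair of consecutive edges shifts the current rotation, viewed inside $\langle b\rangle\cong\bZ/q\bZ$, by one of $0,\pm 1,\pm r,\pm(r-1)$. Given a target $b^j$, I would write $j\equiv kr+s\pmod q$ with $|s|\le r/2$ and $|k|\le q/(2r)+O(1)$, obtained by taking the multiple of $r$ nearest to $j$ in the cyclic group and wrapping around in whichever direction is shorter. Concatenating $|k|$ two-edge moves of size $\pm r$ (realised by $a,ab^r$ or its reverse) with $|s|$ two-edge moves of size $\pm 1$ (realised by $a,ab$) produces a walk of length $2(|k|+|s|)\le q/r+r+O(1)$, which fits comfortably inside $\frac{2q}{r}+r+1$, since $q\ge r$ absorbs the additive constants into the extra $q/r$ of slack; one additional edge suffices to reach a reflection target $ab^j$.

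For Part~2, the common neighbors of two distinct vertices $g,h\in D_{2q}$ correspond bijectively to ordered pairs $(s_1,s_2)\in Z\times Z$ with $s_1 s_2=g^{-1}h$, because $s^{-1}=s$ for every $s\in Z$. From the table, the six nonidentity products of pairs from $Z$ are $b^{\pm 1},b^{\pm r},b^{r-1},b^{1-r}$, each realised by a unique ordered pair in $Z\times Z$. It therefore suffices to show that these six elements are pairwise distinct in $\bZ/q\bZ$ under the hypothesis $r<q/2$ (together with the implicit $r\ge 3$; for $r=2$ the elements $a$ and $ab$ are both common neighbors of $e$ and $b$, so $\nu(F)\ge 2$). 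A routine case analysis excludes every possible coincidence: each forces $r\in\{0,\pm 1,\pm 2,q/2,(q\pm 1)/2,q-1\}$ or $q\le 2$, none of which is compatible with $3\le r<q/2$. Hence every nonidentity element admits at most one factorization, yielding $\nu(F)\le 1$, and equality holds because $a$ is a common neighbor of $e$ and $b$.

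\textbf{Expected obstacle.} The only genuinely delicate step is the bookkeeping in Part~1: the naive decomposition $j=\lfloor j/r\rfloor r+(j\bmod r)$ costs about $\frac{2(q-1)}{r}+2(r-1)$ edges, which exceeds the claimed bound once $r\ge 4$. The remedy is to allow $s$ to become negative (replacing $s$ by $s-r$ and adjusting $k$ accordingly whenever $s>r/2$) and to wrap around the cycle $\bZ/q\bZ$ in whichever direction minimises $|k|$. Part~2, by contrast, is a purely mechanical check once the product table is in hand.
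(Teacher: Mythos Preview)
Your proposal is correct and follows essentially the same route as the paper. For Part~1, both arguments write the target exponent as $kr+s$ and realise the walk by alternating $a,ab^r$ (contributing $2|k|$ edges) and $a,ab$ (contributing $2|s|$ edges), with one extra edge for reflections; your additional idea of wrapping around $\bZ/q\bZ$ in the shorter direction yields the tighter estimate $q/r+r+O(1)$, which sits inside the stated bound $2q/r+r+1$ as soon as $q>r$ (automatic since $2\le r\le q-1$). For Part~2, your common-neighbour count via ordered pairs $(s_1,s_2)\in Z^2$ with $s_1s_2=g^{-1}h$ is equivalent to the paper's $4$-cycle count, and both reduce to checking that $\pm1,\pm r,\pm(r-1)$ are pairwise distinct in $\bZ/q\bZ$; your remark that $r=2$ breaks this (because $r-1=1$, giving $\nu(e,b)=2$) is a genuine edge case glossed over in the lemma's statement, though it does not affect the application, where $r=\lceil\sqrt{2q}\rceil\ge3$.
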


\begin{proof}
\textit{1.}
  The diameter of a Cayley graph $\cay(\Gamma,Z)$ is equal to the minimum $d$
  such that every element of $\Gamma$ is representable as a product
  of at most $d$ elements in $Z$. Every element $b^{sr}$ of $D_{2q}$, where
  $-\lceil q/r\rceil\le s\le\lceil q/r\rceil$, can be represented
  as a product of at most $2\lceil q/r\rceil$ elements $a$ and $ab^r$,
  just because $b^{sr}=(aab^r)^s$.
  To obtain an arbitrary element $b^i$ from the nearest $b^{sr}$,
  it is enough to make at most $2\lfloor(r-1)/2\rfloor$ extra multiplications by $a$ and $ab$.
  An element $ab^i$ is obtainable similarly to $b^{-i}$, with one multiplication
  by $a$ omitted.

\textit{2.} 
The equality $\nu(F)=1$ is obviously equivalent to the condition that $F$ does not contain any subgraph
isomorphic to the 4-cycle graph.
A 4-cycle in $\cay(D_{2q},\{a,ab,ab^r\})$ corresponds to a sequence $c_1,c_2,c_3,c_4$
with $c_i\in\{a,ab,ab^r\}$ such that the product $c_1c_2c_3c_4$ is equal to the identity element of $D_{2q}$.
Moreover, $c_{i+1}\ne c_i$ because all three elements of the connection set are involutory.
A direct inspection shows that no such sequence exists if $r<q/2$.
\end{proof}

Setting $r=\lceil\sqrt{2q}\rceil$ in Lemma \ref{lem:diam}, we obtain a 3-regular vertex-transitive graph $F$
on $2q$ vertices of diameter less than $2\sqrt{2q}+2$. Moreover,
$\nu(F)=1$ if $q\ge11$. Therefore, Assumption \ref{ass:} is fulfilled.
The template graph $F$ yields $G$ and $H$ with $n=16q$ vertices.
By \refeq{sD}, $s(F)>q/(6\sqrt{2q}+7)>0.01\sqrt n$, implying Part~1.

For Part 2 we use 3-regular vertex-transitive graphs with $s(F)=\Omega(v(F))$.
A family of such graphs has been found by Chiu \cite{Chiu92}.
Every graph $F$ in this family is a Cayley graph of
the projective general linear group $\mathrm{PGL}(2,\fp)$ for a prime $p$
such that such that $-2$ and $13$ are quadratic residues modulo $p$;
any prime $p$ such that $p\equiv 1\pmod{104}$ is suitable. 
Every graph in this sequence contains no 4-cycle as its girth is bounded from below
by $2\log_2p$; see \cite[Theorem 5.4]{Chiu92}. Thus, $\nu(F)=1$.
Moreover, every $F$ is a Ramanujan graph.
In general, a $d$-regular graph $F$ is a \emph{Ramanujan graph}
if its second eigenvalue $\lambda_2(F)$ is smaller than or equal to $2\sqrt{d-1}$.
In our case $d=3$ and $\lambda_2(F)\le2\sqrt{2}$.

The \emph{edge expansion} of $F$ is defined as 
$$
h(F)=\min_{0<|S|\le v(F)/2}\frac{|\partial S|}{|S|},
$$
where $S\subset V(F)$ and $\partial S$ denotes the set of edges of $F$ between one vertex
in $S$ and another vertex outside. 
It is known \cite[Theorem 2.4]{HooryLW06} that 
$$
h(F)\ge\frac{d-\lambda_2(F)}2
$$
for a $d$-regular $F$. For $F$ constructed in \cite{Chiu92} we, therefore,
have $h(F)\ge\frac{3-2\sqrt2}2$.

As easily seen, $h_\mathrm{out}(F)\ge h(F)/d$ for $d$-regular graphs, which yields
$h_\mathrm{out}(F)\ge\frac{3-2\sqrt2}6$.
Bound \refeq{pvv} implies in this case that
$s(F)>0.008\,v(F)=0.001\,n$, as desired.
The proof of Theorem \ref{thm:vt} is complete.

\section{Arc-transitivity}\label{s:at}

We begin with a positive result.
Equality \refeq{wl-inv} implies that, if $G$ is arc-transitive,
then the color $\algt{u,v}$ is the same whenever $u$ and $v$ are adjacent.
If this condition is fulfilled, we say that \wl \emph{does not split the adjacency
relation}. The following result reduces recognition of
arc-transitivity of graphs with a prime number of vertices to
verification that Conditions \ref{item:1}--\ref{item:3} listed in
Theorem \ref{thm:p} are met and the adjacency relation is not split. 

\begin{theorem}\label{thm:p-at}
  A vertex-transitive graph $G$ with a prime number of vertices is
  arc-transitive if and only if \wl does not split the adjacency
relation of~$G$. 
\end{theorem}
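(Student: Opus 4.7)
The forward direction is immediate from the automorphism invariance of \wl: if $G$ is arc-transitive and $(u,v),(u',v')$ are two arcs, some $\phi \in \aut(G)$ maps one to the other, so by Equality \refeq{wl-inv} we have $\algt{u,v} = \algt{u',v'}$, and the adjacency relation is not split. So I would concentrate on the reverse implication.

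My plan for the converse is to reuse the schurity machinery already deployed in the proof of Theorem \ref{thm:p}. Assume $G$ is a vertex-transitive graph on $p$ vertices with $p$ prime, and set $\cX = \clo G$. By Equality \refeq{aut-clo}, $\aut(\cX) = \aut(G)$ is transitive on $V(G)$; hence $\cX$ is an association scheme (its unique fiber is $V(G)$), and Proposition \ref{prop:trans-schur} then yields that $\cX$ is schurian, i.e., $\cX = \inv(\aut(G))$. Consequently, each basis relation of $\cX$ is a 2-orbit of $\aut(G)$, and in particular the constituent digraphs of $\algt G$ are precisely the irreflexive 2-orbits of $\aut(G)$.

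Now I would invoke the hypothesis that \wl does not split the adjacency relation. This means that all arcs $(u,v)$ with $uv \in E(G)$ receive the same color in $\algt G$ and so lie in a single basis relation of $\cX$. By the schurity just established, that basis relation is a 2-orbit of $\aut(G)$, which is precisely the statement that $\aut(G)$ acts transitively on the arcs of $G$, i.e., that $G$ is arc-transitive. The degenerate cases in which $G$ is complete or empty are immediate: $G$ is arc-transitive by inspection and its (possibly empty) adjacency relation is trivially monochromatic.

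No serious obstacle arises in this argument; the whole proof is a short payoff from Proposition \ref{prop:trans-schur}, and the only thing one must recognize is that non-splitting of the adjacency relation is exactly the correct combinatorial shadow of transitivity of $\aut(G)$ on arcs, once schurity of $\clo G$ is in hand.
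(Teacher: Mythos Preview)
Your proof is correct and matches the paper's own argument essentially step for step: both invoke Proposition~\ref{prop:trans-schur} to conclude that $\clo G$ is schurian, so that the \wl color classes coincide with the 2-orbits of $\aut(G)$, whence non-splitting of the adjacency relation is equivalent to arc-transitivity. The only cosmetic difference is that the paper phrases the sufficiency direction contrapositively, while you argue it directly.
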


\begin{proof}
  The necessity part is clear.
  To prove the sufficiency, assume that a vertex-transitive graph $G$ with a
  prime number of vertices is not arc-transitive. This means that the
  adjacency relation of $G$ consists of two or more 2-orbits of
  the automorphism group $\aut(G)$. By Proposition \ref{prop:trans-schur},
  the coherent closure $\clo G$ is schurian. In other words, \wl
  splits the Cartesian square $V(G)^2$ into 2-orbits of $\aut(\clo G)=\aut(G)$
  and, therefore, it splits the adjacency relation of~$G$.
\end{proof}

The method following from Theorem \ref{thm:p-at}, like any other method
based on \kwl with a fixed dimension $k$, cannot be extended
to detecting arc-transitivity on all input graphs.
The following theorem shows that, if Theorem \ref{thm:p-at} admits an extension
to the $n$-vertex input graphs for $n$ in a set $S$, then $S$ can
contain only finitely many numbers of certain form.
We show this by elaborating on the approach presented in Section~\ref{s:vt}.

\begin{theorem}\label{thm:at}\hfill
  \begin{enumerate}[\bf 1.]
  \item 
  For every square number $n$ divisible by $16$, there are $n$-vertex graphs $G$ and $H$
  such that $G$ is arc-transitive, $H$ is not, and $G$ and $H$
  are indistinguishable by \kwl as long as $k\le0.05\,\sqrt n$.
\item 
  For every integer $n=16p$ with a prime factor $p\equiv1\pmod3$,
there are $n$-vertex graphs $G$ and $H$
  such that $G$ is arc-transitive, $H$ is not, and $G$ and $H$
  are indistinguishable by \kwl as long as $k\le0.06\,\sqrt n$.
  \end{enumerate}
\end{theorem}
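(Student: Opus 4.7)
The plan is to mirror the structure of the proof of Theorem \ref{thm:vt}: choose an arc-transitive $3$-regular template $F$ satisfying Assumption \ref{ass:}, form the colorless CFI graphs $A$ and $B$ from Subsection \ref{ss:CFI}, and take $G=A\sqcup A$ and $H=A\sqcup B$. Lemma \ref{lem:twist1} gives $A\not\cong B$, and both components are connected, so every automorphism of $H$ preserves its two components; in particular the arcs of $A$ and of $B$ lie in different orbits of $\aut(H)$ and $H$ is not arc-transitive. Lemma \ref{lem:sep} together with the standard disjoint-union compatibility of the \kwl equivalence yields that $G$ and $H$ are indistinguishable by \kwl whenever $k<s(F)$. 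The two substantive tasks that remain are (i) showing that every twisted copy $A^R$ is arc-transitive (so that in particular $G=A\sqcup A$ is), and (ii) producing an arc-transitive template $F$ on the prescribed number of vertices with $s(F)$ large enough.

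\emph{Arc-transitivity of $A^R$.} I would prove an arc-transitive analogue of Lemma \ref{lem:vt}: if $F$ is arc-transitive and satisfies Assumption \ref{ass:}, then every twisted copy $A^R$ is arc-transitive. Given arcs $(x,y)$ and $(x',y')$ of $A^R$ with $x\in Q(v)$, $y\in Q(u)$, $x'\in Q(v')$, $y'\in Q(u')$, arc-transitivity of $F$ supplies $\phi\in\aut(F)$ with $\phi(v)=v'$ and $\phi(u)=u'$. Lifting $\phi$ through \refeq{Q}--\refeq{Qbu} and adjusting by a cell-preserving isomorphism from Lemma \ref{lem:twist2}, exactly as in Claim A of the proof of Lemma \ref{lem:vt}, I obtain an automorphism of $A^R$ sending $Q(v)\to Q(v')$ and $Q(u)\to Q(u')$. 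This reduces the problem to transitivity on the arcs inside a single interspace $A^R[Q(v),Q(u)]$. For two such arcs $(x_0,y_0)$ and $(x_1,y_1)$, set $s=x_0\oplus x_1\in Q(v)$ and $t=y_0\oplus y_1\in Q(u)$; the adjacency rule \refeq{adj-cond} forces $s_u=t_v$, so that the pair $(\lambda_s,\lambda_t)$ preserves the interspace. I would then realize $(\lambda_s,\lambda_t)$ as the restriction of a cell-preserving automorphism of $A^R$ by extending $(s,t)$ to a tuple $(s_w)_{w\in V(F)}$ with $s_w\in Q(w)$ and $(s_w)_{w'}=(s_{w'})_w$ for every edge $ww'\in E(F)$. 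This is a cocycle problem in the $\bZ_2$-cycle space of $F$, solvable because a connected vertex-transitive graph of degree $3$ is $3$-edge-connected, so every edge lies in a cycle of $F$ and the compatibility $s_u=t_v$ is exactly what is needed for extendability.

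\emph{Templates.} For Part 2 I would invoke the Cheng--Oxley family \cite{ChengO87} of arc-transitive cubic Cayley graphs on $2p$ vertices for every prime $p\equiv 1\pmod 3$. Their girth exceeds $4$, so $\nu(F)=1$; combined with the diameter bound supplied by the forthcoming Lemma \ref{lem:diam-2} and the inequality $s(F)\ge v(F)/(6\diam(F)+1)$ from \refeq{sD}, this yields $s(F)\ge 0.06\sqrt n$ for $n=16p$. For Part 1 I would use a parametric variant of the same construction producing arc-transitive cubic graphs on $2k^2$ vertices for each $k\ge 1$, so that $n=16k^2$ ranges over all perfect squares divisible by $16$; Lemma \ref{lem:diam-2} then gives $s(F)\ge 0.05\sqrt n$.

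\emph{Main obstacle.} The delicate point is the cocycle extension in the arc-transitivity argument: the cycle trick of Claim B in the proof of Lemma \ref{lem:vt} realizes a slice-shift on a single cell but simultaneously disturbs adjacent cells, whereas for arcs I need coordinated shifts on two prescribed adjacent cells without twisting the interspace between them, and then consistent propagation through $F$. A second source of difficulty, responsible for the ``perfect square'' hypothesis in Part 1 and for the constants being worse than in Theorem \ref{thm:vt}, is producing arc-transitive cubic templates of every prescribed order with enough expansion; the simultaneous demands of arc-transitivity, cubicity, and non-trivial expansion are extremely restrictive, as witnessed by the sparsity of the Foster census.
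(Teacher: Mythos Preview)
Your overall architecture is right, and your arc-transitivity argument for $A^R$ is a valid alternative to the paper's. Where the paper proves a strengthened cycle lemma (any $3$-path $yvz$ through $v$ extends to a cycle avoiding a prescribed third neighbour $x$ of $v$, via $3$-vertex-connectivity of connected vertex-transitive graphs of degree~$\ge3$) and uses it to realise $\lambda_s$ on $Q(v)$ while leaving $Q(v')$ untouched, you instead solve the global extension problem in the cycle space of $F$. That works, but the relevant connectivity fact is $3$-\emph{vertex}-connectivity (so that $F-\{v,u\}$ is connected and the parity constraints outside $\{v,u\}$ collapse to a single one), not $3$-edge-connectivity; ``every edge lies in a cycle'' is not by itself enough.

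The genuine gap is Part~1. You commit at the outset to a $3$-regular template and then assert the existence of ``a parametric variant of the same construction producing arc-transitive cubic graphs on $2k^2$ vertices for each $k\ge1$''. No such family is supplied, and (as you yourself note via the Foster census) arc-transitive cubic graphs are extremely sparse; there is no reason to expect one on $2k^2$ vertices for every $k$, and Lemma~\ref{lem:diam-2} says nothing about such graphs. The paper sidesteps this by abandoning cubicity for Part~1: it takes the \emph{$4$-regular} discrete torus $F=\cay(\bZ_\ell\times\bZ_\ell,\{\pm(1,0),\pm(0,1)\})$, which is arc-transitive, has $\nu(F)=2<2\cdot4-4$, has $\ell^2$ vertices, and has diameter $\le\ell$; then $A$ has $2^{4-1}\ell^2=8\ell^2$ vertices and $G=A\sqcup A$ has $n=16\ell^2=(4\ell)^2$, covering exactly the squares divisible by~$16$, with $s(F)\ge\ell^2/(6\ell+1)>0.05\sqrt n$. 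Your restriction to $3$-regular templates is what creates the difficulty you flag as the ``second source of difficulty''; relaxing it dissolves the problem.
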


The proof takes the rest of this section. We need a strengthening of Lemma~\ref{lem:vt-elem}.

\begin{lemma}\label{lem:vt-less-elem}
If $x,y,z$ are three vertices adjacent to a vertex $v$ in a vertex-transitive graph $F$,
then the 3-path $yvz$ extends to a cycle avoiding the vertex~$x$.
\end{lemma}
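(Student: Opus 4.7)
The statement to prove is: given three distinct neighbors $x, y, z$ of a vertex $v$ in a vertex-transitive graph $F$, the 3-path $yvz$ extends to a cycle avoiding $x$. The plan is to reformulate this as finding a path from $y$ to $z$ in the induced subgraph $F \setminus \{v, x\}$, and then to establish the connectivity needed by showing that $F$ itself is $3$-connected.

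First, I would note that the existence of three distinct neighbors of $v$ together with vertex-transitivity forces every vertex of $F$ to have degree at least $3$, so $\delta(F) \geq 3$. The main structural input I plan to invoke is the Watkins--Mader theorem: a connected vertex-transitive graph of minimum degree $\delta$ has vertex-connectivity at least $\lceil 2(\delta + 1)/3 \rceil$; with $\delta \geq 3$ this gives $\kappa(F) \geq 3$. Consequently $F \setminus \{v, x\}$ is connected, any path $P$ from $y$ to $z$ inside it exists, and the cycle $y, v, z, P^{-1}, y$ is the desired extension.

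The main obstacle is the reliance on this external structural theorem. A self-contained alternative in the spirit of the proof of Lemma~\ref{lem:vt-elem} would suppose for contradiction that $\{v, x\}$ is a $2$-cut of $F$ (with $vx$ an edge) separating $y$ from $z$, use vertex-transitivity to produce an analogous edge-containing $2$-cut through every vertex, and combine the constraint $\delta(F) \geq 3$ with the $2$-connectivity already available from Lemma~\ref{lem:vt-elem} (which forces $v$ and $x$ each to have neighbors in every component of $F \setminus \{v, x\}$) to derive a contradiction by a careful edge count on the smallest component. I expect this direct route to be significantly more delicate than the one-line citation of Watkins--Mader.
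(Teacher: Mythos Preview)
Your approach is essentially identical to the paper's: reduce to finding a $y$--$z$ path in $F\setminus\{v,x\}$ and invoke Watkins's theorem (the paper cites \cite{Watkins70} directly for the fact that a connected vertex-transitive graph of degree greater than~2 is 3-connected, which is the special case of the Watkins--Mader bound you quote). The only point you should tighten is that the lemma does not assume $F$ is connected, so before applying the connectivity theorem you must pass to the connected component $F'$ containing $v$ (which is itself vertex-transitive), exactly as the paper does; with that adjustment your argument matches the paper's proof.
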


\begin{proof}
Let $F'$ be the connected component of $F$ containing the vertex $v$.
Note that $F'$ is vertex-transitive. Let $F''$ be obtained from $F'$
by removal of the vertices $v$ and $x$. We use the fact \cite{Watkins70}
that every connected vertex-transitive graph of vertex degree more than 2
is actually 3-connected. Applying it to $F'$, we conclude that $F''$
is connected. Let $P$ be a path between the vertices $y$ and $z$ in $F''$.
Adding the edges $yv$ and $vz$ to $P$, we obtaing a cycle in $F$ that avoids~$x$.
\end{proof}

Let $A$ and $B$ be the graphs constructed in Section~\ref{ss:CFI}.

\begin{lemma}\label{lem:at}
If $F$ is a connected arc-transitive graph, then both $A$ and $B$ are arc-transitive.
\end{lemma}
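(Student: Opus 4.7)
The plan is to show more generally that every twisted version $A^R$ of $A$ is arc-transitive, which specializes to $A$ (with $R = \emptyset$) and $B$ (with $R = \{e\}$). This parallels the proof of Lemma \ref{lem:vt}: first a cell-level reduction via the two claims from that lemma, then a local construction within one interspace.

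Given two arcs $(x_1, y_1)$ and $(x_2, y_2)$ of $A^R$ with $x_i \in Q(v_i)$ and $y_i \in Q(u_i)$, I would use arc-transitivity of $F$ to choose $\phi \in \aut(F)$ with $\phi(v_1) = v_2$ and $\phi(u_1) = u_2$, and apply Claim A from the proof of Lemma \ref{lem:vt} (valid under Assumption \ref{ass:}) to produce an automorphism $\sigma$ of $A^R$ with $\sigma(Q(v_1)) = Q(v_2)$ and $\sigma(Q(u_1)) = Q(u_2)$. This reduces the task to mapping two arcs $(x_1', y_1')$ and $(x_2, y_2)$ sitting inside a single interspace $A^R[Q(v), Q(u)]$, where $v=v_2$, $u=u_2$, $x_1'=\sigma(x_1)$, $y_1'=\sigma(y_1)$.

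Identifying each cell with a subgroup of $(\bZ_2)^k$, set $s_v = x_1' \oplus x_2$ and $s_u = y_1' \oplus y_2$; the adjacency of both arcs (in either the twisted or untwisted interspace) forces $(s_v)_u = (s_u)_v$. The aim is to build an automorphism $\tau$ of $A^R$ whose restriction to $Q(v)$ is $\lambda_{s_v}$ and whose restriction to $Q(u)$ is $\lambda_{s_u}$; then $\tau\sigma$ maps $(x_1, y_1)$ to $(x_2, y_2)$. As in Claim B of Lemma \ref{lem:vt}, I assemble $\tau$ as a product of cycle automorphisms: a cycle $a_0 a_1 \ldots a_l a_0$ in $F$ yields an automorphism of $A^R$ whose restriction to $Q(a_j)$ is $\lambda_{a_{j-1} a_j a_{j+1}}$ and which acts trivially on cells outside the cycle. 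When $(s_v)_u = 0$, the sets $T_v = \{w : (s_v)_w = 1\}$ and $T_u = \{w : (s_u)_w = 1\}$ both have even size and avoid the edge $vu$; pair them up arbitrarily and use Lemma \ref{lem:vt-less-elem} to find, for each pair $\{w, w'\}$, a cycle through $w v w'$ (resp.\ $w u w'$) that avoids $u$ (resp.\ $v$), so the two cells' contributions stay independent and the product gives the desired $\tau$.

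The main obstacle is the case $(s_v)_u = (s_u)_v = 1$, where $T_v \setminus \{u\}$ and $T_u \setminus \{v\}$ each have an unpaired element, say $w^*$ and $w^{**}$. My plan is to realize the coupled residual shifts $\lambda_{w^* v u}$ on $Q(v)$ and $\lambda_{v u w^{**}}$ on $Q(u)$ by a single cycle automorphism along a cycle of $F$ containing the path $w^* v u w^{**}$. Such a cycle exists because, by Watkins' theorem used already in the proof of Lemma \ref{lem:vt-less-elem}, a connected vertex-transitive graph of degree at least $3$ is $3$-connected, so $F \setminus \{v, u\}$ is connected and supplies a path from $w^{**}$ back to $w^*$; the degenerate subcase $w^* = w^{**}$ is handled by the triangle $v u w^* v$, which exists since $w^* \in N(v) \cap N(u)$. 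Composing this cycle automorphism with the pair-cycle automorphisms built from $T_v \setminus \{u, w^*\}$ and $T_u \setminus \{v, w^{**}\}$ yields $\tau$, completing the proof.
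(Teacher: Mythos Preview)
Your proposal is correct and follows the same overall architecture as the paper: reduce to a single interspace via an arc-level analogue of Claim~\ref{cl:vt-a} (this is exactly the paper's Claim~\ref{cl:at-a}), then handle two arcs within one interspace by assembling cycle automorphisms.

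The one genuine difference is in the treatment of the case $(s_v)_u=(s_u)_v=1$, where the two arcs lie in different $K_{2^{k-2},2^{k-2}}$-components of the interspace. The paper first uses a cycle through the edge $vv'$ (via Lemma~\ref{lem:vt-elem}) to produce an automorphism that swaps the two components of the interspace; this reduces everything to the same-component case, after which the two cells can be handled \emph{independently} by cycles avoiding the other cell (Lemma~\ref{lem:vt-less-elem}), exactly as in your $(s_v)_u=0$ case. You instead keep the two shifts coupled and realize the residual pair $\lambda_{w^*vu}$, $\lambda_{vuw^{**}}$ by a single cycle through the $4$-path $w^*vuw^{**}$, invoking $3$-connectivity (Watkins) to close the cycle in $F\setminus\{v,u\}$. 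Both routes work; the paper's decomposition is a bit more modular (it reuses only Lemmas~\ref{lem:vt-elem} and~\ref{lem:vt-less-elem} and avoids the $w^*=w^{**}$ case distinction), while yours has the virtue of building $\tau$ in one pass without an intermediate component-swap.
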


\begin{proof}
We will prove that any twisted version $A^R$ of $A$ is arc-transitive.
The proof of the following fact is similar to the proof of Claim~\ref{cl:vt-a};
it uses only the arc-transitivity of $F$ and does not need the stronger condition of
2-arc-transitivity.

\begin{claim}\label{cl:at-a}
For every two pairs $vv'$ and $uu'$ of adjacent vertices of $F$, 
there is an automorphism of $A^R$ mapping $Q(v)$ onto $Q(u)$ and $Q(v')$ onto $Q(u')$.
\end{claim}

Claim \ref{cl:at-a} reduces proving that $A^R$ is arc-transitive to proving the following fact.

\begin{claim}\label{cl:at-b}
Let $vv'\in E(F)$. For every two pairs of adjacent vertices $xx'$ and $yy'$ of the interspace $A^R[Q(v),Q(u)]$
such that $x,y\in Q(v)$ and $x',y'\in Q(v')$, 
there is an automorphism of $A^R$ taking $x$ to $y$ and $x'$ to~$y'$.
\end{claim}

\begin{subproof}
Recall that $A[Q(u),Q(v)]\cong 2K_{2^{k-2},2^{k-2}}$.
Notice first that there is an automorphism $\alpha$ of $A^R$ transposing the two $K_{2^{k-2},2^{k-2}}$-parts.
Indeed, let $C$ be a cycle in $F$ containing the edge $uv$, which exists by Lemma \ref{lem:vt-elem},
and consider the permutation $\alpha$, constructed similarly to the permutation \refeq{along-cycle} in the 
proof of Claim \ref{cl:vt-b}, that twists the subgraph $A[Q(x),Q(y)]$ twice for each edge $xy$ along~$C$.

It remains to show that $A^R$ has an automorphism $\alpha$ transposing $x$ and $y$
and fixing each of $x'$ and $y'$ (the argument works as well for the symmetric case
when $x$ and $y$ are fixed while $x'$ and $y'$ are transposed).
Like in the proof of Claim \ref{cl:vt-b}, consider the permutation $\lambda_s$ in $\Lambda_{Q(v)}$
for $s=x\oplus y$ and argue that there exists an $\alpha\in\aut(A^R)$ whose restriction to the cell $Q(v)$ 
coincides with $\lambda_s$ and whose restriction to $Q(v')$ is the identity. 
We split the set of the interspaces twisted by $\lambda_s$ into pairs,
and for each pair form an automorphism of $A^R$ similarly to \refeq{along-cycle}
along a cycle in $F$ avoiding the vertex $v'$. Such a cycle exists by Lemma \ref{lem:vt-less-elem}.
Let $\alpha$ be the product of these automorphisms. It remains to note that
$\alpha$ induces $\lambda_s$ on $Q(v)$ and does not touch any vertex in $Q(v')$, as desired.
\end{subproof}

The proof of the lemma is complete.
\end{proof}

In order to prove Part 1 of Theorem \ref{thm:at},
we specify $F$ by taking $F=\cay(\bZ_\ell\times\bZ_\ell,\Set{\pm(1,0),\pm(0,1)})$, where $\ell\ge3$.
This graph fulfills Assumption \ref{ass:} because it is 4-regular and $\nu(F)=2$.
Like in the proof of Theorem \ref{thm:vt}, let $G$ be the vertex-disjoint union
of two copies of $A$, and $H$ be the vertex-disjoint union
of $A$ and $B$. Since these are connected non-isomorphic graphs, $H$ is 
not arc-transitive (in fact, not even vertex-transitive).

Being a Cayley graph, $F$ is vertex-transitive. Taking into account its automorphisms
$(x,y)\mapsto(y,x)$ and $(x,y)\mapsto(x,-y)$, we see that $F$ is arc-transitive.
By Lemma \ref{lem:at}, $A$ and, therefore, $G$ are arc-transitive.

The diameter of $F$ does not exceed $\ell$.
Bound \refeq{sD} implies that $s(F)\ge\frac{\ell^2}{6\ell+1}\ge\frac16\,\ell-1$.
By Lemma \ref{lem:sep}, $G$ and $H$ are indistinguishable by \kwl for all $k<\frac16\,\ell=0.05\sqrt n$,
where $n=16\ell^2$ is the number of vertices in $G$ and in $H$.

For Part 2, we use a result by Cheng and Oxley \cite{ChengO87} who proved that, for every prime $p$ with $p\equiv1\pmod3$,
up to isomorphism there exists exactly one arc-transitive 3-regular graph with $2p$ vertices.
Denote this graph by $F_p$. It is known (e.g., \cite{FengK02})
that $F_p$ is isomorphic to a Cayley graph of the dihedral group $D_{2p}$.
In fact, we can set $F_p=\cay(D_{2p},\{ab,ab^r,ab^{r^2}\})$,
where $r\ne1$ is a cube root of unity in the multiplicative group $\fpm$.
In order to show that this graph is arc-transitive, it suffices to check
that $\aut(F_p)$ acts transitively on the neighborhood of the identity element $e$ of $D_{2p}$
in $F_p$, that is, on the set $N(e)=\{ab,ab^r,ab^{r^2}\}$. Define a map
$\alpha_s\function{D_{2p}}{D_{2p}}$ by $\alpha_s(b^i)=b^{si}$ and $\alpha_s(ab^i)=ab^{si}$. 
If $s$ is coprime to $p$, then $\alpha_s$ is an automorphism of $D_{2p}$
and, therefore, it is an automorphism of $F_p$.
It remains to note that $\{\alpha_1,\alpha_r,\alpha_{r^2}\}$
is a subgroup of $\aut(F_p)$ acting transitively on~$N(e)$.

The graph $F_p$ fulfills Assumption \ref{ass:} because $\nu(F_p)=1$.\footnote{In fact,
$F_p$ is of girth 6, where a shortest cycle corresponds to the equality $(abab^rab^{r^2})^2=1$.}
We now estimate the diameter of~$F_p$.

\begin{lemma}\label{lem:diam-2}
$\diam(F_p)<4\sqrt{3p}+7$.
\end{lemma}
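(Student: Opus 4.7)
The plan is to bound the diameter by analyzing words in the three generators and reducing to a short-vector problem in the Eisenstein lattice. First, I would carry out the algebra in $D_{2p}$: using $a^2=b^p=1$ and $aba=b^{-1}$, one checks that $(ab^s)(ab^t)=b^{t-s}$. By induction, a word $(ab^{s_1})(ab^{s_2})\cdots(ab^{s_n})$ with $s_l \in \{1,r,r^2\}$ equals $b^{(s_2-s_1)+(s_4-s_3)+\cdots+(s_n-s_{n-1})}$ when $n$ is even, and $ab^{s_1-s_2+s_3-\cdots+s_n}$ when $n$ is odd. Hence reaching $b^i$ by a word of length $2k$ is equivalent to writing $i$ as a sum of $k$ pairwise differences $s-t$ with $s,t \in \{1,r,r^2\}$.

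Using $1+r+r^2 \equiv 0 \pmod p$, each such nonzero difference factors through $(r-1)$: the possibilities are $\pm(r-1)$, $\pm(r^2-1)=\mp r^2(r-1)$, and $\pm(r^2-r)=\pm r(r-1)$. Setting $j := i(r-1)^{-1} \in \fp$, the question becomes to express $j$ as an integer combination $\alpha\cdot 1 + \beta\cdot r + \gamma\cdot r^2 \pmod p$ minimizing $|\alpha|+|\beta|+|\gamma|$, with word length essentially twice this minimum. The odd-parity case of reaching $ab^i$ is handled analogously, after prepending one initial generator $ab^{s_1}$ (with $s_1$ chosen from $\{1,r,r^2\}$) and applying the same pair analysis to the residue $i-s_1$.

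The core of the argument is a covering estimate for the lattice $\Lambda = \{(\alpha,\beta) \in \bZ^2 : \alpha + \beta r \equiv 0 \pmod p\}$, which has index $p$ in $\bZ^2$. Since $p \equiv 1 \pmod 3$, the prime $p$ splits in the Eisenstein integers $\bZ[\omega]$ with $\omega=e^{2\pi i/3} \in \mathbb{C}$ as $p = \pi\bar\pi$; under the embedding $(\alpha,\beta) \mapsto \alpha+\beta\omega$ the lattice $\Lambda$ is identified with the ideal $(\pi)$. Geometrically $(\pi)$ is $\bZ[\omega]$ rotated and scaled by $|\pi|=\sqrt p$, a hexagonal lattice whose covering radius in $\mathbb{C}$ equals $\sqrt{p/3}$ (the circumradius of a hexagonal fundamental cell of side length $\sqrt p$). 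Consequently, for every $j \in \fp$ there exist integers $\alpha,\beta$ with $\alpha+\beta r \equiv j \pmod p$ and Eisenstein norm $\alpha^2-\alpha\beta+\beta^2 \le p/3$, from which a routine Lagrange-multiplier calculation on this quadratic form gives $|\alpha|+|\beta| \le 2\sqrt{p/3}$.

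Putting the pieces together, a word of length at most $2\cdot 2\sqrt{p/3}+1 = 4\sqrt{p/3}+1$ suffices to reach any element of $D_{2p}$, which is comfortably smaller than the claimed $4\sqrt{3p}+7$ (the stated bound is deliberately generous, which is why I expect to have plenty of room). The one technical step requiring care is identifying the correct ideal structure in $\bZ[\omega]$ and verifying the covering-radius computation; everything else is essentially bookkeeping. Should the clean Eisenstein picture be considered overkill, the same bound can be reached by a cruder pigeonhole argument that produces one short vector in $\Lambda$ via Minkowski's theorem and then reduces any target along it, at the cost of a slightly worse constant that is still well within the $4\sqrt{3p}+7$ budget.
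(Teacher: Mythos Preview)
Your proposal is correct and in fact yields the sharper bound $\diam(F_p)\le 4\sqrt{p/3}+1$, a factor of~$3$ better in the leading term than the stated $4\sqrt{3p}+7$. The route, however, is genuinely different from the paper's. The paper first passes to the isomorphic presentation $\cay(D_{2p},\{a,ab,ab^{r+1}\})$, observes that products of two generators yield $b,\,b^r,\,b^{-r^2}$, and thereby reduces to bounding $\rho(\fpa,\{1,r,-r^2\})$; it then proves an injectivity claim (the map $(a_1,a_2)\mapsto a_1r-a_2r^2$ is one-to-one on $\{1,\dots,m\}^2$ for $m=\lceil\sqrt{p/3}\rceil$, because a collision would force $b_1^2+b_1b_2+b_2^2\equiv 0\pmod p$ with $|b_i|<m$) to get $|2m\,\{1,r,-r^2\}|\ge p/3$, and finishes with two applications of the Cauchy--Davenport theorem to obtain $6m\,\{1,r,-r^2\}=\fpa$. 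You instead keep the original generators, factor out $r-1$, and identify the coset-representative problem with the covering problem for a prime ideal in $\bZ[\omega]$, reading off the answer from the hexagonal covering radius. Your argument is more structural and gives the tight constant; the paper's is more elementary (no algebraic number theory or lattice geometry) at the cost of the factor~$3$ lost to Cauchy--Davenport. The two are closely related underneath: the paper's non-collision inequality $b_1^2+b_1b_2+b_2^2\not\equiv 0\pmod p$ for small nonzero $(b_1,b_2)$ is exactly the statement that your lattice $\Lambda$ has minimum Eisenstein norm~$p$, so both proofs ultimately exploit the same arithmetic of the Eisenstein integers. Your suggested fallback via Minkowski plus reduction along a short vector is, in spirit, precisely what the paper does.
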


\begin{proof}
We begin with a useful observation that, since $r$ is a root of the polynomial $x^3-1=(x-1)(x^2+x+1)$
in $\fp$, we have
\begin{equation}
  \label{eq:rrr}
r^2+r+1=0  
\end{equation}
in this field.

Define a map
$\beta_t\function{D_{2p}}{D_{2p}}$ by $\beta_t(b^i)=b^{i}$ and $\alpha_s(ab^i)=ab^{i+t}$. 
This is an automorphism of $D_{2p}$ for every integer $t$. The composition $\gamma=\beta_1\circ\alpha_{r-1}$,
where $\alpha_s$ is as defined above, is an automorphism of $D_{2p}$.
Note that $\gamma(a)=ab$, $\gamma(ab)=ab^r$, and $\gamma(ab^{r+1})=ab^{r^2}$.
It follows that $F_p$ is isomorphic to $\cay(D_{2p},\{a,ab,ab^{r+1}\})$,
and below we will work with this representation of~$F_p$.

Given a group $\Gamma$ and a set $Z$ of its elements, let
$\rho(\Gamma,Z)$ denote the minimum $d$
  such that every element of $\Gamma$ is representable as a product
  of at most $d$ elements in $Z$. As was already noted in the proof of Lemma \ref{lem:diam},
in the case that $Z$ is the connection set of a Cayley graph of $\Gamma$,
we have
\begin{equation}
  \label{eq:diam-rho}
\diam(\cay(\Gamma,Z))=\rho(\Gamma,Z).  
\end{equation}
Note that
\begin{equation}
  \label{eq:ababa}
aab=b,\ abab^{r+1}=b^r,\text{ and }aab^{r+1}=b^{r+1}=b^{-r^2},  
\end{equation}
where the last equality follows from \refeq{rrr}.
Looking at the cyclic subgroup of $D_{2p}$ generated by $b$, we see that
\begin{equation}
  \label{eq:rho-rho}
\rho(D_{2p},\{a,ab,ab^{r+1}\})\le1+2\,\rho(\fpa,\{1,r,-r^2\}),  
\end{equation}
where the first summand corresponds to the multiplication by $a$ in $D_{2p}$,
and the factor of 2 in the second summand is due to Equalities \refeq{ababa}.
Our goal is, therefore, to estimate $\rho(\fpa,\{1,r,-r^2\})$.

Given two sets $A$ and $B$ of elements of an abelian group,
we use the standard notation $A+B=\Set{a+b}{a\in A,\,b\in B}$
and also inductively define $1\,A=A$ and $m\,A=(m-1)\,A+A$ for
an integer $m>1$.

Set
$$
m=\lceil\sqrt{p/3}\rceil.
$$
Sergei Konyagin communicated to us a proof that, if $A=\{1,r,r^2\}$
is the order-3 subgroup of the multiplicative group $\fpm$, then
the set $2m\,A$ contains a fraction of the elements of $\fpa$ independent of~$p$.
We use his argument to prove the following similar fact for~$\fpa$.

\begin{claim}\label{cl:Konyagin}
$|2m\,\{1,r,-r^2\}|\ge m^2$.  
\end{claim}

\begin{subproof}
It suffices too show that all elements $a_1r-a_2r^2$ of $\fpa$ for $1\le a_1,a_2\le m$
are pairwise distinct. Assume towards a contradiction that this is not the case, that is,
$$
a_1r-a_2r^2=a'_1r-a'_2r^2
$$
for $(a_1,a_2)\ne(a'_1,a'_2)$; here and below all equalities are supposed to be in $\fp$. This implies
$$
b_1r = b_2r^2
$$
for some integers $b_1$ and $b_2$ not both equal to zero and such that $|b_1| < m$,
$|b_2| < m$. Then, in fact, neither $b_1$ nor $b_2$ is equal to zero.
Substituting $r=b_1/b_2$ in \refeq{rrr}, we obtain
$$
(b_1/b_2)^2 + (b_1/b_2) +1 =0.
$$
For the integers $b_1$ and $b_2$ this means that
\begin{equation}
  \label{eq:bbbb}
b_1^2+b_1b_2+b_2^2\equiv 0\pmod p.  
\end{equation}
On the other hand, $b_1^2+b_1b_2+b_2^2\ne0$ because
$$
b_1^2+b_1b_2+b_2^2>b_1^2-2|b_1||b_2|+b_2^2=(|b_1|-|b_2|)^2.
$$
Moreover, $b_1$ and $b_2$ satisfy the bounds
$$-p<-m^2<b_1^2+b_1b_2+b_2^2\le 3(m-1)^2<p,$$
and we have a contradiction with~\refeq{bbbb}.
\end{subproof}

\begin{claim}\label{cl:CD}
$6m\,\{1,r,-r^2\}=\fpa$.  
\end{claim}

\begin{subproof}
The Cauchy–Davenport theorem, a  classical result in additive combinatorics, says that, for sets $A,B\subset\fpa$,
$$
|A+B|\ge\min(|A|+|B|-1,p).
$$
By Claim \ref{cl:Konyagin}, 
$$
|2m\,\{1,r,-r^2\}|\ge m^2 > p/3. 
$$
Since $p\equiv1\pmod3$, we actually have
$$
|2m\,\{1,r,-r^2\}|\ge(p+2)/3. 
$$
Applying the Cauchy–Davenport theorem for $A=B=2m\,\{1,r,-r^2\}$
and once again for $A=2m\,\{1,r,-r^2\}$ and $B=4m\,\{1,r,-r^2\}$, we obtain the estimate
$|6m\,\{1,r,-r^2\}|\ge p$, implying the claim.
\end{subproof}

Claim \ref{cl:CD} can be recast as the estimate
$$
\rho(\fpa,\{1,r,-r^2\})\le6m<6(\sqrt{p/3}+1).
$$
The required bound for $\diam(F_p)$ now follows from
Equality \refeq{diam-rho} and Inequality~\refeq{rho-rho}.
\end{proof}

Using Lemma \ref{lem:diam-2} and Bound \refeq{sD}, we obtain
$$
s(F_p)\ge\frac{2p}{6\diam(F_p)+1}>\frac{\sqrt p}4
$$
for all $p\ge43$. By Lemma \ref{lem:sep}, the template graph $F_p$ results in two graphs $G$ and $H$
that are indistinguishable by \kwl as long as $k\le\sqrt p/4=\sqrt n/16$,
where $n=16p$ is the number of vertices in $G$ and in $H$.
If $p<43$, this bound for the WL dimension is also true, just saying that $G$ and $H$ are indistinguishable by~\WL1.

The proof of Theorem \ref{thm:at} is complete.

\section{The Weisfeiler-Leman regularity hierarchy}\label{s:hierarchy}

The stabilized \WL k-coloring of $k$-tuples of vertices determines 
a canonical coloring of $s$-tuples for each $s$ between 1 and $k$.
Specifically, if $s<k$, we define $\kalgt{x_1,\ldots,x_s}=\kalgt{x_1,\ldots,x_s,\ldots,x_s}$
just by cloning the last vertex in the $s$-tuple $k-s$ times.

\begin{definition}\label{def:homog}
Let $s\le k$.
A graph $G$ is called \emph{$\wlh sk$-regular}
if \WL k does not make any
non-trivial splitting of $V(G)^s$, that is, $\kalgt\barx\ne\kalgt\bary$
exactly when $\alg k0\barx\ne\alg k0\bary$
for every pair of $s$-tuples $\barx,\bary\in V(G)^s$.
\end{definition}

Note that a graph is $\wlh 11$-regular if and only if it is regular,
and it is $\wlh 22$-regular if and only if it is strongly regular.
The class of $\wlh 12$-regular graphs does not seem to have that clear characterization.
This class obviously contains all vertex-transitive graphs. Moreover,
it contains all constituent graphs of association schemes
(i.e., symmetric closures of basis relations in association schemes).
The last class, in turn, contains
strongly regular and distance-regular graphs \cite{BrouwerCN89}
(in fact, every connected strongly regular graph is distance-regular).
If two $\wlh 12$-regular graphs $G_1$ and $G_2$ are indistinguishable by \WL2,
then the disjoint union of $G_1$ and $G_2$ is also $\wlh 12$-regular.
Furthermore, the class is closed under graph complementation.
Note also that a graph $G$ is $\wlh 12$-regular if and only if
the coherent closure of $G$ is an association scheme.

Denote the class of all $\wlh sk$-regular graphs by $\wlhi sk$.
We first note that the $\wlhi sk$ hierarchy collapses
as the parameter $s$ increases. To show this, we relate it
to two known graph-theoretic symmetry and regularity concepts.

A graph $G$ is \emph{$s$-ultrahomogeneous} if every isomorphism
between two induced subgraphs of $G$ with at most $s$ vertices extends
to an automorphism of the whole graph $G$. 
Denote the class of all $s$-ultrahomogeneous graphs by $\uhi s$.
Note that 1-ultrahomogeneous
graphs are exactly vertex-transitive graphs, and 2-ultrahomoge\-neous graphs
are rank 3 graphs. Cameron \cite{Cameron80} proved that
every 5-ultrahomogeneous graph is $s$-ultrahomogeneous for all $s\ge5$, i.e.,
$\uhi s=\uhi5$ for $s\ge5$.
All graphs in $\uhi5$ were identified by Gardiner \cite{Gardiner78}.
Those are $mK_n$ (the vertex-disjoint union of $m$ copies of the complete graph $K_n$),
their complements $\overline{mK_n}$ (i.e., the regular multipartite graphs),
the 5-cycle graph $C_5$, and the $3\times3$-rook's graph (or, the same,
the line graph $L(K_{3,3})$ of the complete bipartite graph $K_{3,3}$).

A graph $G$ is called \emph{$s$-tuple regular} (or, sometimes, \emph{$s$-isoregular}) 
if the number of common neighbors
of any set $S$ of at most $s$ vertices in $G$ depends only on the isomorphism
type of the induced subgraph $G[S]$. 
Let us denote the class of all $s$-tuple regular graphs by $\tri s$.
Note that
1-tuple regular graphs are exactly regular graphs, and 2-tuple regular graphs
are exactly strongly regular graphs. Cameron \cite{Cameron80} (see also \cite[Theorem 8.21]{CameronvL91})
and, independently, Gol'fand (see the historical comments in \cite{Babai95,CaiFI92}) proved
that $\tri5=\uhi5$. As a consequence, $\tri s=\tri5$ for all~$s\ge5$.

Let $s\le k$. It is clear that 
$$
\uhi s\subseteq\wlhi sk\subseteq\tri s.
$$
It follows that
$$
\wlhi sk=\Set{mK_n,\overline{mK_n}}_{m,n}\cup\Set{C_5,L(K_{3,3})}\text{\ \ if\ \ }k\ge s\ge5.
$$

Let us fix the first parameter to $s=1$. 
Since the $\wlhi sk$ hierarchy collapses with respect to the parameter $k$ for each $s\ge5$,
it is natural to ask whether the hierarchy of the classes $\wlhi 1k$
collapses with respect to $k$. More precisely, note that
\begin{equation}\label{eq:wlhi1}
\wlhi11\supseteq\wlhi12\supseteq\wlhi13\supseteq\ldots\supseteq\vtclass,
\end{equation}
where $\vtclass=\uhi1$ denotes the class of all vertex-transitive graphs.
We say that the hierarchy \refeq{wlhi1} \emph{collapses at level $K$}
if $\wlhi1k=\wlhi1K$ for all $k\ge K$. Theorem \ref{thm:vt} has the following
consequence.

\begin{corollary}
The hierarchy \refeq{wlhi1} does not collapse at any level.
\end{corollary}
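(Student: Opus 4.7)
The plan is to compare the hierarchy with its limit. I will first establish the intersection formula
\[
\bigcap_{k\ge 1}\wlhi1k=\vtclass,
\]
so that a collapse of the chain at level $K$ would force $\wlhi1K=\vtclass$; Theorem \ref{thm:vt} then supplies a non-vertex-transitive graph inside $\wlhi1K$, giving the desired contradiction.

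The inclusion $\vtclass\subseteq\wlhi1k$ is immediate from the automorphism-invariance of \WL k. For the reverse inclusion I would appeal to the classical fact that for every finite graph $H$ and every $k\ge|V(H)|$ the \WL k color partition on $V(H)^k$ coincides with the orbit partition of $\aut(H)$. Specialising to padded tuples of the form $(v,\dots,v)$ shows that the induced \WL k-partition of $V(H)$ refines the vertex-orbit partition of $\aut(H)$, so $H\in\wlhi1k$ for such $k$ forces $\aut(H)$ to act transitively on $V(H)$.

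Now fix an arbitrary level $K$ and assume for contradiction that $\wlhi1k=\wlhi1K$ for every $k\ge K$; the intersection formula then gives $\wlhi1K=\vtclass$. I would choose $n$ divisible by $16$ so large that $K\le 0.01\sqrt n$ and apply Theorem \ref{thm:vt}, part~1, obtaining $n$-vertex graphs $G$ (vertex-transitive) and $H$ (not vertex-transitive) indistinguishable by \WL K. Since $G$ is vertex-transitive, all its vertices carry a single \WL K-color, so the multiset of vertex colors on $G$ consists of one color repeated $n$ times; \WL K-indistinguishability equates this multiset with the one on $H$, forcing every vertex of $H$ to carry the same color. Thus $H\in\wlhi1K$, contradicting $\wlhi1K=\vtclass$. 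As $K$ was arbitrary, the hierarchy collapses at no level.

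The only delicate ingredient is the orbit characterization of \WL k for $k$ large: it is folklore but deserves a precise citation. Everything else is direct bookkeeping on Theorem \ref{thm:vt} together with the observation that \WL K-indistinguishability equates vertex-color multisets.
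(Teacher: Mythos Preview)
Your proof is correct and uses the same two ingredients as the paper: Theorem~\ref{thm:vt} to produce a non-vertex-transitive graph in $\wlhi1K$, and the folklore fact that \WL k recovers orbits once $k\ge|V(H)|$. The paper's presentation is only cosmetically more direct---instead of passing through the intersection formula and a contradiction, it takes the non-vertex-transitive witness $G\in\wlhi1K$ and observes that $G\notin\wlhi1k$ for $k=|V(G)|$, concluding $\wlhi1k\subsetneq\wlhi1K$ immediately.
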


\begin{proof}
  Note that the equality $\vtclass=\wlhi 1K$ would mean that
  vertex-transitivity were recognizable just by running \WL K on $G$ and looking whether
the resulting partition of $V(G)$ is non-trivial.
Theorem \ref{thm:vt}, therefore, implies that $\vtclass\ne\wlhi 1K$
for any $K$. Let $G$ be a non-vertex-transitive graph in $\wlhi 1K$.
Let $k$ denote the number of vertices in $G$.
Since $G$ obviously does not belong to $\wlhi1k$, we
conclude that $\wlhi1k$ is a proper subclass of~$\wlhi1K$.
\end{proof}

\section{Concluding discussion}

We have suggested a new, very simple combinatorial algorithm
recognizing, in polynomial time, vertex-transitivity of
graphs with a prime number of vertices.
The algorithm consists, in substance, in running \wl
on an input graph and all its vertex-individualized copies.
This is perhaps the first example of using the Weisfeiler-Leman algorithm
for recognition of a non-trivial graph property rather than for
isomorphism testing.

One can consider another, conceptually even simpler approach.
If an input graph $G$ is vertex-transitive,
then \kwl colors all diagonal $k$-tuples $(u,\ldots,u)$, $u\in V(G)$,
in the same color. Is this condition for a possibly large, but fixed $k$
sufficient to claim vertex-transitivity? In general, a negative
answer immediately follows from Theorem \ref{thm:vt}.
Does there exist a fixed dimension $k$ such that the answer
is affirmative for graphs with a prime number of vertices?
This is apparently a hard question; it seems that we cannot even exclude
that $k=3$ suffices.

Another interesting question is whether \kwl is able to efficiently
recognize vertex-transitivity on $n$-vertex input graphs for $n$
in a larger range than the set of primes. The lower bound of
Theorem \ref{thm:vt} excludes this only for $n$ divisible by 16,
in particular, for the range of $n$ of the form $16p$ for a prime $p$.
Can \kwl be successful on the inputs with $2p$ vertices?
Conversely, can the negative result of Theorem \ref{thm:vt}
be extended to a larger range of~$n$?\footnote{Note that Theorem \ref{thm:vt}
can be extended in a straightforward way to the range of $n$ divisible by $8p$
for each prime $p$. It is enough to define $G$ as the vertex-dijoint union of $p$
copies of $A$ and $H$ as the vertex-dijoint union of $p-1$
copies of $A$ and one copy of~$B$.}
These questions also make a perfect sense for arc-transitivity.

We conclude with a brief discussion of
the concept of a \emph{rank 3 graph}, which is a ``2-dimensional'' analog
of vertex-transitivity. While the automorphism group
of a vertex-transitive graph acts transitively on its vertex set,
the automorphism group of a rank 3 graph acts transitively not only
on the vertices, but also on the ordered pairs of adjacent vertices
and on the ordered pairs of non-adjacent vertices.
Thus, $G$ is a rank 3 graph if and only if both $G$ and its complement
are arc-transitive. Since this concept is more stringent
than vertex- and arc-transitivity, the corresponding
recognition problem has a priori better chances to be solvable by \kwl,
though its complexity status is wide open.

Let $P$ be a pattern graph with designated vertices $u_1$ and $u_2$.
For vertices $x_1$ and $x_2$ of a graph $G$, let $s_P(x_1,x_2)$
denote the number of copies of $P$ in $G$ such that $u_1$
is placed at $x_1$ and $u_2$ is placed at $x_2$.
A graph $G$ satisfies the \emph{$t$-vertex condition}
if, for every $P$ with at most $t$ vertices, the count $s_P(x_1,x_2)$
depends only on (non)equality and (non)adjacency of $x_1$ and $x_2$
(Higman \cite{Higman70}).
If $G$ is a rank 3 graph, then the $t$-vertex condition
is obviously satisfied for every $t$.
The long-standing Klin's conjecture \cite{FaradzevKM94}
(see also the references in \cite{Reichard00,Pech20}) 
says that there is an integer $T$ such that, conversely, 
every graph $G$ satisfying the $T$-vertex condition is a rank 3 graph.
We remark that, if this conjecture is true, then \WL{T}
recognizes the class of 3 rank graphs in a straightforward way.
This follows from a simple observation that, if
the color of a $T$-tuple $(x_1,x_2,\ldots,x_2)$ in $\mathrm{WL}_T(G)$
depends only on the type of the pair $(x_1,x_2)$ (like
for rank 3 graphs), then $G$ satisfies the $T$-vertex condition.

\subsection*{Acknowledgement}
We gratefully acknowledge a crucial contribution of Sergei Konyagin in the proof of Lemma~\ref{lem:diam-2}.

\end{document}